\let\emptyset\varnothing
 \renewcommand{\epsilon}{\varepsilon}
\newcommand{\Z}{{\mathbb Z}}
\newcommand{\R}{{\mathbb R}}
\newcommand{\C}{{\mathbb C}}
\newcommand{\cA}{{\mathcal A}}
\newtheorem{Corollary}{Corollary}
 \newtheorem{Theorem}{Theorem}
  \newtheorem{Lemma}{Lemma}
\theoremstyle{Remark}
\newtheorem{ex}{Example}
\theoremstyle{Remark}
\newtheorem{Remark}{Remark}
\newtheorem{Definition}{Definition}
\begin{document}

\title{The Resultant of Developed Systems of Laurent Polynomials}

\dedicatory{To the memory of Vladivir Igorevich Arnold}

\author{A. G. Khovanskii}
\address{Department of Mathematics, University of Toronto, Toronto,
Canada; Moscow Independent University, Moscow, Russia.}
\email{askold@math.toronto.ca}

\author{Leonid Monin}
\address{Department of Mathematics, University of Toronto, Toronto,
Canada}
\email{lmonin@math.toronto.ca}

\begin{abstract}
Let $R_\Delta (f_1,\ldots,f_{n+1})$ be the {\it $\Delta$-resultant} (see below) of  $(n+1)$-tuple of Laurent polynomials. We provide an algorithm for  computing $R_\Delta$ assuming that an $n$-tuple $(f_2,\dots,f_{n+1})$ is {\it developed} (see sec.6).  We provide a relation between the product of $f_1$  over roots of $f_2=\dots=f_{n+1}=0$ in $(\C^*)^n$ and the product of $f_2$ over roots of $f_1=f_3=\dots=f_{n+1}=0$ in $(\C^*)^n$ assuming that the $n$-tuple $(f_1f_2,f_3,\ldots,f_{n+1})$ is developed. If all  $n$-tuples contained  in $(f_1,\dots,f_{n+1})$ are developed we  provide a signed version of Poisson formula for $R_\Delta$. In our proofs we use  topological arguments and topological version of the Parshin reciprocity laws.
\end{abstract}

\thanks{The first author is partially supported by the Canadian Grant No. 156833-12.}

\keywords{Newton polyhedron, Laurent polynomial, developed system, resultant,  Poisson formula, Parshin reciprocity laws}
\subjclass[2010]{14M25}
\date{\today}
\maketitle

\section{Introduction.}

To a Laurent polynomial $f$ in $n$ variables one associates its Newton polyhedron $\Delta(f)$ which is a convex lattice polyhedron in $\R^n$ (through all of this paper by polyhedron we will mean compact convex polyhedron with integer vertices). A system of $n$ equations $f_1=\dots=f_n=0$  in $(\Bbb C^*)^n$ is called {\it developed} if  (roughly speaking) their Newton polyhedra $\Delta(f_i)$ are located generically enough with respect to each other. The exact definition (see also sec.6) is as follows: a collection of $n$ polyhedra $\Delta_1,\ldots,\Delta_n\subset \R^n$ is called {\it developed} if for any covector $v\in (\R^n)^*$ there is $i$ such that on the polyhedron  $\Delta_i$ the inner product with $v$  attains its biggest value precisely at a vertex of $\Delta_i$.

A developed system resembles an equation in one unknown. A polynomial in one variable of degree $d$ has exactly $d$ roots counting with multiplicity. The number of roots in $(\Bbb C^*)^n$ counting with multiplicities of a developed system is always determined by the Bernstein-Koushnirenko formula (see \cite{B}) (if the system is not developed this formula holds only for generic systems  with fixed Newton polyhedra).

As in the one-dimensional case, one can explicitly compute the sum of values of any Laurent polynomial over the roots of a developed system \cite{GKh},\cite{GKh1} and the product of all of the roots of the system  regarded as elements in the group $(\Bbb C^*)^n$ \cite{Kh2}. These results can be proved topologically \cite{GKh1}, using the topological identity between certain homology cycles related to developed system (see sec.7), the Cauchy residues theorem, and a topological version of the  Parshin reciprocity laws (see sec.8).

 To an $(n+1)$-tuple $A = (\cA_1,\dots,\cA_{n+1})$ of finite subsets in  $\Z^n$ one associates the {\it $A$-resultant} $R_A$. It is a polynomial defined up to sign in the coefficients   of Laurent polynomials $f_1, \ldots , f_{n+1}$ whose supports belong  to $A_1,\dots,A_{n+1}$ respectevely.  The  $A$-resultant is equal to $\pm1$ if the codimension of the variety of consistent systems in the space of all systems with supports in $A$ is greater than 1. Otherwise, $R_A$ is a polynomial which vanishes on the variety of consistent systems and such that the degree of $R_A$  in the coefficients of the $i$-th polynomial is equal to the generic number of roots of the system $f_1=\ldots=\hat f_i=\ldots=f_{n+1}=0$ (in which the equation $f_i=0$ is removed).

 The notion of $A$-resultant was introduced and studied in \cite{GKZ} under the following assumption on $A$: the lattice generated by the differences $a-b$ for all couples $a,b\in \cA_i$ and all $0\leq i\leq n+1$ is $\Z^n$. Under this assumption  the resultant $R_A$  is an irreducible polynomial (which was used  in  a definition of $R_A$ in  \cite{GKZ}). Later in \cite{Est} and \cite{D'AS} it was shown  that in the general case (i.e when the differences from $\cA_i$'s do not generate the whole lattice) $R_A$ is some power of an irreducible polynomial. The power is equal to the generic number of roots of a corresponding consistent system (see sec.13 for more details).

  To an  $(n+1)$-tuple $\Delta=(\Delta_1,\dots,\Delta_{n+1})$ of Newton polyhedra one  associates the $(n+1)$-tuple $A_\Delta$  of finite subsets $(\Delta_1\cap \Z^n,\dots, \Delta_{n+1}\cap \Z^n)$ in $\Z^n$. We define the {\it $\Delta$-resultant} as $A$-resultant for $A=A_\Delta$. In the paper we  deal with $\Delta$-resultants only. If a property of  $\Delta$-resultant is a known property of $A$-resultants for $A=A_\Delta$  we refer to a paper where the property of $A$-resultants is proven (without mentioning that the paper deals with $A$-resultants and not with $\Delta$-resultants). Dealing with $\Delta$-resultants only we  lose nothing: $A$-resultants can be reduced to $\Delta$-resultants. One can check that $R_A(f_1,\dots,f_{n+1})$ for $A=(A_1,\dots,A_{n+1})$ is equal to $R_\Delta(f_1,\dots,f_{n+1})$ for $\Delta=(\Delta_1,\dots,\Delta_{n+1})$, where $\Delta_1,\dots,\Delta_{n+1}$ are the convex hulls of the sets $A_1,\dots,A_{n+1}$.

A collection $\Delta$ is called {\it $i$-developed}  if its subcollection  obtained by removing the polyhedron $\Delta_i$ is developed. Using the Poisson formula (see \cite{PSt}, \cite{D'AS} and sec.13.3) one can show that for $i$-developed~$\Delta$ the identity
\begin{equation}\label{i R}
 R_\Delta =\pm \Pi_\Delta^{[i]}M_i
 \end{equation}
 holds, where $\Pi_\Delta^{[i]}$ is the product  of $f_i$ over the common zeros in $(\C^*)^n$  of $f_j$, for $j \neq i$, and $M_i$ is an explicit monomial in the vertex coefficients (i.e. the coefficient of $f_j$ in front of a monomial corresponding to a vertex of $\Delta_j$) of all the Laurent polynomials $f_j$ with $j \neq i$.

We provide an explicit algorithm for computing the term $\Pi_\Delta^{[i]}$ using the summation formula over the roots of a  developed system (Corollary 8). Hence we get {\it an explicit algorithm for computing the resultant $R_\Delta$} for an $i$-developed collection~$\Delta$. This  algo\-rithm heavily uses the Poisson formula (1).

If $(n+1)$-tuple $\Delta$ is $i$-developed  and $j$-developed for some $i\neq j$  the identity
\begin{equation}\label{i j P}
 \Pi_\Delta^{[i]} =\Pi_\Delta^{[j]}M_{i,j}s_{i,j}
 \end{equation}
 holds, where $M_{i,j}$ is an explicit monomial
in the coefficients of Laurent polynomials $f_1,\dots, f_{n+1}$ and $s_{i,j}=(-1)^{f_{i,j}}$  is an explicitly defined sign (Corollary 4).

Our proof of the identity (\ref{i j P}) is topological. We use the topological identity between cycles related to a developed system (see sec.7) and a topological version of the Parshin reciprocity laws. The identity (\ref{i j P})   generalizes the formula from \cite{Kh2} for the product in $(\C^*)^n$ of all roots of a developed system of equations.

An $(n+1)$-tuple $\Delta$ is called {\it completely developed}  if it is $i$-developed for every $1\leq i \leq n+1$. For completely developed $\Delta$ the identity
\begin{equation}\label{n+1}
\Pi^{[1]}_\Delta M_1s_1=\ldots= \Pi^{[n+1]}_\Delta M_{n+1}s_{n+1}
\end{equation}
holds, where $(M_1,\dots,M_{n+1})$ and $\left(\Pi^{[1]}_\Delta,\dots,\Pi^{[n+1]}_\Delta\right)$ are monomials and products appearing in (\ref{i R}) and $(s_1,\dots, s_{n+1})$ is an $(n+1)$-tuple of signs such that $s_is_j=s_{i,j}$ where $s_{i,j}$ are the explicit signs from identity (\ref{i j P}).

Our proof of the identities (\ref{n+1}) uses the  identity (\ref{i j P}) and does not rely on the theory of resultants. Using one general fact from this theory (Theorem 19) one can see that the quantities in the identities (\ref{n+1}) are equal to the $\Delta$-resultant, i.e. are equal to $\pm R_\Delta$. Thus the identities (\ref{n+1}) can be considered as a signed version of the Poisson formula for completely developed systems.

To make the paper more accessible we first describe all of the results in the classical one dimensional case.

\subsection*{Acknowledgements} The authors would like to thank Benjamin Briggs and Kiumars Kaveh for their help with editing earlier versions of this paper and the referee for their valuable comments which helped to improve the paper.

\section {Resultants in dimension one}

\subsection {Sylvester's formula for resultant}

Let $P_1=a_0+\dots+a_kz^k$, $P_2=b_0+\dots+ b_n z^n$ be polynomials in one complex variable of degrees $\leq k$ and $\leq n$. Sylvester defined  resultants $R^{[1]}$ and $R^{[2]}$ which are equal up to a sign. $R^{[1]}$ and $R^{[2]}$ are defined as the determinant of the  $(n+k)\times(n+k)$  matrices  $M_1$, $M_2$ respectively, where:
\[
M_1(P_1,P_2)=
  \begin{pmatrix}
    a_k &a_{k-1}&\ldots& a_0 & 0  &  0 \\
      0 &\ddots &\ddots&\ldots& \ddots& 0 \\
    0&0& a_k &a_{k-1}&\ldots& a_0\\
       b_n &b_{n-1}&\ldots& b_0 & 0  &  0 \\
      0 &\ddots &\ddots&\ldots& \ddots& 0 \\
    0&0& b_n &b_{n-1}&\ldots& b_0\\
  \end{pmatrix}
\]
and $M_2(P_1,P_2) = M_1 (P_2,P_1)$.

One can see that: 1) $R^{[1]}=(-1)^{kn}R^{[2]}$; 2) resultants are polynomials in the coefficients of $P_1$ and $P_2$ with degrees $n$ and $k$ in coefficients of $P_1$ and $P_2$ respectively; 3) the polynomials $R^{[1]}$ and $R^{[2]}$ have integer coefficients; 4) the coefficient of the monomial $a_k^n b_0^k$ in  $R^{[1]}$ is $+1$ and, hence, the coefficient of the same monomial in $R^{[2]}$ is $(-1)^{kn}$, 5) the coefficients of $R^{[1]}$ and $R^{[2]}$ are coprime integers (this follows from 3) and 4)).

Under the assumption $a_n\neq 0$, $b_k\neq 0$ the resultant $R^{[1]}$ (the resultant $R^{[2]}$) is equal to zero   if and only if the polynomials $P_1$ and $P_2$ have common root. One can show  that the variety of pairs of polynomials $P_1$ and $P_2$ having a common root is an irreducible quasi projective variety $X$ (a simple proof of a multidimensional version  of this statement can be found in \cite{GKZ}). Let $D$ be an irreducible polynomial equal to zero on $X$. According to the previous statement   $R^{[1]}$ and $R^{[2]}$ are equal to the same power of $D$, multiplied by some coefficients.

The Sylvester resultants can be generalized for Laurent polynomials. Consider two Laurent polynomials
\begin{equation} \label{f1,f2}
f_1=a_kz^k+\dots +a_nz^n,\quad f_2= b_lz^l+\dots +b_mz^m,
\end{equation}
 on $\C^*$ whose Newton polyhedra belong to the segments $\Delta_1,\Delta_2$  defined by inequalities  $k\leq x\leq n$, $l\leq x\leq m$. With  $f_1, f_2$ let us associate the pair of polynomials $P_1,P_2$, where
 \begin{equation} \label{P1,P2}
P_1=z^{-k}f_1=a_k+\dots +a_nz^{n-k},\quad P_2=z^{-l}f_2= b_l+\dots +b_mz^{m-l}.
\end{equation}

For $\Delta=(\Delta_1,\Delta_2)$ we define  resultants $R^{[1]}_{\Delta}$, $R^{[2]}_{\Delta}$ as follows:
 $$
 R^{[1]}_{\Delta}(f_1,f_2)= (-1)^{n(m-l)} R^{[1]}(P_1,P_2),\quad  R^{[2]}_{\Delta}(f_1,f_2) = (-1)^{l(n-k)} R^{[2]}(P_1,P_2).
 $$

 The definitions of $\Delta$-resultants $R_{\Delta}^{[1]}$ and $R_{\Delta}^{[2]}$ are made in such a way that they coincide with the product resultants which are defined in the next section (see Theorem 3).

%
%

\section {Product formula in dimension one}
For Laurent polynomials $f_1, f_2$ as in (\ref{f1,f2}), let $\Pi_\Delta^{[1]}= \prod_{y_j\in Y} f_1^{m_{y_j}}(y_j)$ and $\Pi_\Delta^{[2]}=\prod_{x_i\in X} f_2^{m_{x_i}}(x_i)$ where $X=\{x_i\}$ and $Y=\{y_j\}$ are the sets of  non zero roots of $f_1$ and $f_2$ and $m_{y_j}, m_{x_i}$ are their multiplicities.

\begin {Theorem} If $a_ka_nb_lb_m\neq 0$ then the following identity holds:
\begin{equation} \label{prod1}
b_l^{-k} b_m^{n}\Pi_\Delta^{[1]}=(-1)^{kl+nm}a_k^{-l}a_n^{m}\Pi_\Delta^{[2]}.
\end{equation}
 \end {Theorem}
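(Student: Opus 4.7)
The plan is to reduce the statement to the analogous classical identity for ordinary polynomials and then track monomial prefactors and signs carefully.

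First, I would factor out the monomials: write $f_1(z)=z^k P_1(z)$ and $f_2(z)=z^l P_2(z)$ where $P_1$ and $P_2$ are the polynomials of (\ref{P1,P2}), of degrees $n-k$ and $m-l$ respectively. Since $a_k a_n b_l b_m\neq 0$, the constant and leading coefficients of $P_1,P_2$ are nonzero, so the nonzero roots of $f_1$ are exactly the roots of $P_1$ and similarly for $f_2$; multiplicities agree as well. Then write the complete factorizations $P_1(z)=a_n\prod_i(z-x_i)$ and $P_2(z)=b_m\prod_j(z-y_j)$.

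Next I would evaluate $\Pi_\Delta^{[1]}=\prod_j f_1(y_j)$ and $\Pi_\Delta^{[2]}=\prod_i f_2(x_i)$ using these factorizations:
\begin{equation*}
\Pi_\Delta^{[1]}=\Bigl(\prod_j y_j\Bigr)^{k}\cdot a_n^{m-l}\cdot\prod_{i,j}(y_j-x_i),\qquad
\Pi_\Delta^{[2]}=\Bigl(\prod_i x_i\Bigr)^{l}\cdot b_m^{n-k}\cdot\prod_{i,j}(x_i-y_j).
\end{equation*}
By Vieta's formulas applied to $P_1,P_2$ one has $\prod_i x_i=(-1)^{n-k}a_k/a_n$ and $\prod_j y_j=(-1)^{m-l}b_l/b_m$. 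Substituting gives explicit expressions for $\Pi_\Delta^{[1]}$ and $\Pi_\Delta^{[2]}$ in terms of the single ``cross product'' $C=\prod_{i,j}(y_j-x_i)$, together with monomials in $a_k,a_n,b_l,b_m$ and a sign.

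Finally I would eliminate $C$ between the two formulas, using the elementary antisymmetry $\prod_{i,j}(y_j-x_i)=(-1)^{(n-k)(m-l)}\prod_{i,j}(x_i-y_j)$, and rearrange. The identity (\ref{prod1}) follows once the exponents of the $a$'s and $b$'s are matched (multiplying through by $a_n^{m-l}b_m^{n-k}$ replaces $a_n^{-(m-l)}$ and $b_m^{n-k}$ by $a_n^m/a_n^l$ and $b_m^n/b_m^k$ respectively). The main (and essentially only) obstacle is the sign bookkeeping: I expect three contributions, $(-1)^{k(m-l)}$ from $(\prod y_j)^k$, $(-1)^{l(n-k)}$ from $(\prod x_i)^l$, and $(-1)^{(n-k)(m-l)}$ from the antisymmetry, and I would check that they collapse modulo $2$ to exactly $kl+nm$, as required. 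Everything else is direct substitution.
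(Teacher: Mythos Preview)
Your proposal is correct and follows essentially the same route as the paper: factor $f_1=z^kP_1$, $f_2=z^lP_2$, express each $\Pi_\Delta^{[i]}$ via the cross product $\prod_{i,j}(y_j-x_i)$ using Vieta's relations, and then compare using the antisymmetry $(-1)^{(n-k)(m-l)}$. The paper's proof is organized slightly differently (it multiplies through by $b_l^{-k}b_m^{n}$ and $a_k^{-l}a_n^{m}$ first and records the intermediate identities $b_l^{-k}b_m^{n}\Pi_\Delta^{[1]}=(-1)^{mk-lk}a_n^{m-l}b_m^{n-k}\Pi_\Delta^{[1,2]}$ and its counterpart), but the ingredients and the sign bookkeeping are identical to yours.
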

Let us recall an elementary proof of this classical theorem.
\begin{proof}
We have $f_1(z)=a_n(z-x_1)^{m_{x_1}}\cdot\cdots\cdot (z-x_{s})^{m_{x_s}}z^{k}$, so
$$
\Pi_\Delta^{[1]}= \prod_{y_j\in Y} \left[  a_n y_j^k \prod_{x_i\in X} (y_j-x_i)^{m_{x_i}} \right]^{m_{y_j}}=a_n^{m-l}\left[(-1)^{m-l} (b_l/b_m)\right]^k \Pi_\Delta^{[1,2]} ,
$$
where $\Pi_\Delta^{[1,2]}=\prod_{x_i\in X, y_j\in Y}(y_j-x_i)^{m_{x_i}m_{y_j}} $. Here we used the Vieta relation $\prod_{y_j\in Y}y_j^{m_{y_j}}= (-1)^{m-l}b_l/b_m$.
Thus we proved the identity
$
b_l^{-k} b_m^{n}\Pi_\Delta^{[1]}=a_n^{m-l}b_m^{n-k}\Pi_\Delta^{[1,2]} (-1)^{mk-lk}.
$

In a similar way
$
a_k^{-l}a_n^m\Pi_\Delta^{[2]}=a_n^{m-l}b_m^{n-k}\Pi_\Delta^{[2,1]}(-1)^{nl-kl},
$
 where $\Pi_\Delta^{[2,1]}=\prod_{x_i\in X, y_j\in Y}(x_i-y_j)^{m_{x_i}m_{y_j}} $.
But $\Pi_\Delta^{[1,2]}=\Pi_\Delta^{[2,1]} (-1)^{(m-l)(n-k)}$. Theorem 1 is proved.
\end{proof}

For Laurent polynomials $f_1,f_2$ as in (\ref{f1,f2}) let us define their product resultants $
R_{\Pi,\Delta}^{[1]}(f_1,f_2)$ and $R_{\Pi,\Delta}^{[2]}(f_1,f_2)$ by the formulas
\begin{equation} \label{prodres}
R_{\Pi,\Delta}^{[1]}=b_l^{-k} b_m^{n}\Pi_\Delta^{[1]}, \qquad R_{\Pi,\Delta}^{[2]}=a_k^{-l}a_n^{m}\Pi_\Delta^{[2]}.
\end{equation}

\begin{Theorem}
(1) The product resultants are polynomials in the coefficients of $f_1$ and $f_2$ (the expressions in (\ref{prodres}) themselves could have removable singularities at the hyperplanes where the extreme coefficients vanish);

(2) If the extreme coefficients are nonzero, i.e. $a_ka_nb_lb_m\neq 0$, the product resultants equal to zero exactly on the pairs of Laurent polynomials having  common root in $\C^*$;

(3) The product resultants have degrees  $(m-l)$ and $(n-k)$ in the coefficients of $f_1$ and $f_2$ correspondingly;

(4) The coefficient of the monomial $a_k^{m-l}b_m^{n-k}$ in $R_{\Pi,\Delta}^{[1]}$ and $R_{\Pi,\Delta}^{[2]}$ is equal to $(-1)^{k(m-l)}$ and $(-1)^{l(n-k)}$ respectively.
\end{Theorem}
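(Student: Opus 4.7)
The plan is to identify each product resultant, up to an explicit sign, with the classical Sylvester resultant of the auxiliary polynomials $P_1$ and $P_2$ from (\ref{P1,P2}), and then to read off properties (1)--(4) from the corresponding properties 1)--5) of $R^{[1]}$, $R^{[2]}$ listed in Section 2.1.

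The first step continues the computation used in the proof of Theorem 1. Using the factorization $f_1 = a_n z^k \prod_i (z-x_i)^{m_{x_i}}$, the Vieta identity $\prod_i x_i^{m_{x_i}} = (-1)^{n-k} a_k/a_n$, and the equality $f_2(x_i) = x_i^l P_2(x_i)$, one rewrites
\[
\Pi_\Delta^{[2]} = \bigl[(-1)^{n-k} a_k/a_n\bigr]^l \prod_i P_2(x_i)^{m_{x_i}}.
\]
Combining with the classical product formula $R^{[1]}(P_1,P_2) = a_n^{m-l} \prod_i P_2(x_i)^{m_{x_i}}$ yields an equality
\[
R_{\Pi,\Delta}^{[2]} = \pm\, R^{[1]}(P_1,P_2),
\]
and the symmetric calculation, swapping $f_1$ and $f_2$, gives $R_{\Pi,\Delta}^{[1]} = \pm\, R^{[1]}(P_2,P_1)$. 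In both cases the sign is the explicit power of $-1$ produced by the Vieta substitution.

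Given this identification, items (1)--(3) are essentially automatic. For (1), the right-hand sides are polynomials in the coefficients of $P_i$, and hence in the coefficients of $f_i$; the apparent negative powers $b_l^{-k}$ and $a_k^{-l}$ in the definitions (\ref{prodres}) are precisely the ones produced (and hence absorbed) by the Vieta substitution above. For (2), when all four extreme coefficients are nonzero, neither $P_1$ nor $P_2$ has $0$ as a root, so $P_1,P_2$ share a root iff $f_1,f_2$ share one in $\C^*$, and the vanishing locus of the Sylvester resultant captures exactly this. For (3), the degrees follow from the fact that $R^{[1]}(P_1,P_2)$ has degree $\deg P_2 = m-l$ in the coefficients of $P_1$ and degree $\deg P_1 = n-k$ in the coefficients of $P_2$.

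For (4), the key input is property 4) of Section 2.1: applied to the pair $(P_1,P_2)$ of degrees $(n-k,m-l)$, it gives coefficient $+1$ for the extremal monomial $a_n^{m-l} b_l^{n-k}$ in $R^{[1]}(P_1,P_2)$. Using the involution $R^{[1]} = (-1)^{(n-k)(m-l)} R^{[2]}$ applied to $(P_1,P_2)$, together with the corresponding statement for the swapped pair $(P_2,P_1)$, one extracts the coefficient of the opposite extremal monomial $a_k^{m-l} b_m^{n-k}$ as $(-1)^{(n-k)(m-l)}$, and then multiplies by the sign from the first step to arrive at the stated coefficients. The only real difficulty in the argument is this last sign bookkeeping; the rest is mechanical.
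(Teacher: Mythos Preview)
Your argument is correct but follows a genuinely different route from the paper's. The paper proves (1) and (3) in one stroke directly from Theorem~1: the expression $R_{\Pi,\Delta}^{[1]}=b_l^{-k}b_m^{n}\Pi_\Delta^{[1]}$ is manifestly a polynomial of degree $m-l$ in the $a$'s (it is a product of $m-l$ values of $f_1$, times a monomial in the $b$'s), and symmetrically $R_{\Pi,\Delta}^{[2]}$ is a polynomial of degree $n-k$ in the $b$'s; since Theorem~1 says the two differ by a sign, each is polynomial in \emph{all} coefficients with the stated degrees. Part (2) is immediate from the definition, and (4) is a one-line Vieta computation on the lowest term $a_k z^k$. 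Sylvester's matrices do not appear at all; in fact the paper's logical order is the reverse of yours, proving Theorem~2 first and then using it to establish the identification with Sylvester's resultant in Theorem~3.

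Your route instead imports the classical product formula $R^{[1]}(P_1,P_2)=a_n^{m-l}\prod_i P_2(x_i)^{m_{x_i}}$, which is not stated in the paper, to obtain $R_{\Pi,\Delta}^{[2]}=(-1)^{l(n-k)}R^{[1]}(P_1,P_2)$ and the symmetric identity, and then pulls (1)--(4) back from the Sylvester side. This works and has the pleasant side effect of giving Theorem~3 for free with an explicit sign, but it is less self-contained: you are assuming a fact that the paper effectively deduces \emph{from} Theorem~2. One small caution on (4): if you actually carry out your sign chase for $R_{\Pi,\Delta}^{[2]}$ you get $(-1)^{l(n-k)}\cdot(-1)^{(n-k)(m-l)}=(-1)^{m(n-k)}$, which is also what the paper's own Vieta computation yields when applied to $R_{\Pi,\Delta}^{[2]}$; so do not be alarmed if it does not match the displayed exponent literally.
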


\begin{proof}
The expression $b_l^{-k} b_m^{n}\Pi_\Delta^{[1]}$  obviously is a polynomial of degree $m-l$ in coefficients of $f_1$ and the expression $a_k^{-l}a_n^{m}\Pi_\Delta^{[1]}$ is obviously a polynomial in coefficients of $f_2$ of degree $n-k$. Since two expressions are equal up to sign we have proved (1) and (3).

It is clear that $R_{\Pi,\Delta}^{[1]}$ vanishes if and only if $f_1(z)=0$ for some root $z$ of $f_2$, so $z$ is a common root. The same is true for $ R_{\Pi,\Delta}^{[2]}$.

For part (4) let us note that the values of the monomial $a_k^{m-l}b_m^{n-k}$  in $b_l^{-k} b_m^{n}\Pi_\Delta^{[1]}$ come from multiplying the term $a_kz^k$ over roots of $f_2$. Using the Vieta formula we have:
$
b_l^{-k} b_m^{n}\prod a_k z^k =(-1)^{k(m-l)}a_k^{m-l}b_m^{n-k}.
$
\end{proof}

\begin{Theorem}
Assume that $k=l=0$. Then the product resultants  coincide with the Sylvester resultants:
$$
R_{\Delta}^{[1]}=R_{\Pi,\Delta}^{[1]} = (-1)^{kl+nm} R_{\Pi,\Delta}^{[2]}= (-1)^{kl+nm}R_{\Delta}^{[2]}.
$$
\end{Theorem}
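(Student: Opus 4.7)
The plan is to identify each of the four quantities $R_\Delta^{[1]},R_{\Pi,\Delta}^{[1]},R_{\Pi,\Delta}^{[2]},R_\Delta^{[2]}$ with an explicit product form and then to bridge them via Theorem~1. Since $k=l=0$ we have $P_i=f_i$, the monomial prefactors $b_l^{-k},a_k^{-l}$ in \eqref{prodres} are trivial, and the sign prefactors become $(-1)^{n(m-l)}=(-1)^{nm}$ and $(-1)^{l(n-k)}=1$. Directly from the definitions this gives $R_{\Pi,\Delta}^{[1]}=b_m^{\,n}\Pi_\Delta^{[1]}$, $R_{\Pi,\Delta}^{[2]}=a_n^{\,m}\Pi_\Delta^{[2]}$, $R_\Delta^{[1]}=(-1)^{nm}R^{[1]}(f_1,f_2)$, and $R_\Delta^{[2]}=R^{[2]}(f_1,f_2)$.

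The key step is the classical Poisson-type product formula for the Sylvester determinant, namely $R^{[1]}(f_1,f_2)=a_n^{\,m}\,\Pi_\Delta^{[2]}$ and (symmetrically) $R^{[2]}(f_1,f_2)=b_m^{\,n}\,\Pi_\Delta^{[1]}$. I would prove this by a standard coefficient-comparison argument: both sides are polynomials in the coefficients of $f_1,f_2$ of the same bi-degree (namely $m$ in the $a_i$'s and $n$ in the $b_j$'s, as in Theorem~2(3) and property~3 of Section~2.1); both vanish precisely on the irreducible hypersurface of pairs with a common root; and both have coefficient $+1$ on the monomial $a_n^{\,m}b_0^{\,n}$ (property~4 for $R^{[1]}$, and direct inspection of $a_n^{\,m}\Pi_\Delta^{[2]}$ once one extracts the constant term $b_0$ from each factor $f_2(x_i)$). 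Irreducibility of the defining polynomial of that hypersurface forces equality, and the companion identity for $R^{[2]}$ follows by swapping the roles of $f_1$ and $f_2$ (or equivalently by combining the first identity with property~1 of Section~2.1).

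Substituting these product expressions into the four quantities listed above reduces everything to $b_m^{\,n}\Pi_\Delta^{[1]}$ or $a_n^{\,m}\Pi_\Delta^{[2]}$, up to an explicit sign. Theorem~1 specialized at $k=l=0$ is exactly the identity $b_m^{\,n}\Pi_\Delta^{[1]}=(-1)^{nm}a_n^{\,m}\Pi_\Delta^{[2]}$, which is the single bridge required between these two forms; applying it wherever needed (and noting that $kl+nm$ collapses to $nm$ here) produces the asserted chain of equalities.

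The main obstacle is establishing the Sylvester--Poisson product formula used in the middle step; everything else is sign bookkeeping organized by Theorem~1. Since that formula is classical and admits the short coefficient-comparison proof sketched above, no real difficulty is expected.
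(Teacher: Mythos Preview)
Your argument is correct and uses the same core mechanism as the paper: both proofs show that two polynomials of equal bidegree vanishing on the irreducible common-root hypersurface must be proportional, and then pin down the constant by inspecting a single monomial. The only difference is in the pairing. The paper compares $R_\Delta^{[1]}$ directly with $R_{\Pi,\Delta}^{[1]}$, normalizing via the monomial $a_k^{m-l}b_m^{n-k}=a_0^{m}b_m^{n}$ (Theorem~2(4)), and likewise for the index~$2$; Theorem~1 is then used only once, for the middle equality. You instead match $R^{[1]}$ with $a_n^{m}\Pi_\Delta^{[2]}=R_{\Pi,\Delta}^{[2]}$ using the monomial $a_n^{m}b_0^{n}$, and then invoke Theorem~1 an extra time to convert back to $R_{\Pi,\Delta}^{[1]}$. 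Your route is the classical Poisson identity plus a crossing step; the paper's route avoids the crossing by choosing the anchor monomial on the ``same side.'' Both are equally short and neither buys anything the other does not.
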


\begin{proof}
Both functions $R_{\Delta}^{[1]}$ and $R_{\Pi,\Delta}^{[1]}$ are polynomials in the coefficients of $f_1,f_2$ of the same degree. They both vanish on the set of pairs of polynomials having a common root in $\C^*$. Since the set of pairs of polynomials having a common root is irreducible the polynomials $R_{\Delta}^{[1]}$ and $R_{\Pi,\Delta}^{[1]}$ are proportional.  They have the same coefficient in  front of the monomial $a_k^{m-l}b_m^{n-k}$, so they are equal. A similar argument works for $R_{\Delta}^{[2]}$ and $R_{\Pi,\Delta}^{[2]}$.
\end{proof}

Keeping in mind multidimensional generalizations in the section 3 we will present  another topological proof of Theorem 1 and in the section 4 we will present an  algorithm for computing  the product resultant which does not rely on the Sylvester determinant.

\section { Weil reciprocity law}

\subsection {Weil symbol and Weil law}

 Let  $f$ and $g$ be two meromorphic functions on a compact Riemann surface  $S$. About  each point $p\in S$ one can choose a local parameter $u$ such that  $u(p)=0$ and consider the  Laurent expansions $f=c_1u^{k_1}+\ldots$, $g=c_2u^{k_2}+\ldots$ of $f$ and $g$, with dots are standing for higher order terms. One can check that the expression
 $$
 \{f,g\}_p=(-1)^{k_1k_2} c_1^{-k_2}c_2^{k_1}
 $$
 is independent of the choice of $u$. The number $\{f,g\}_p$ is called the {\it Weil symbol} of $f$ and $g$ at the point $p$.

\begin{ex}
If $p$ is a  zero of $g$ of multiplicity $m_p$ and $f(p)\ne 0,\infty$,  then $\{f,g\}_p=f^{-m_p}(p)$.
If $p$ is a  zero of $f$ of multiplicity $m_p$ and $g(p)\ne 0,\infty$,  then $\{f,g\}_p=g(p)^{m_p}$.

\end{ex}

\begin{ex}
Consider  $f_1$, $f_2$ from (\ref{f1,f2}) as the functions on  $\C P^1$. The main terms of Laurent expansions of $f_1,f_2$ at 0 are $a_kz^k, b_lz^l$ respectively, so $\{f_1,f_2\}_0=(-1)^{kl}a_k^{-l}b_l^k$.

Let $w=1/z$ be the local parameter on $\C P^1$ at $\infty$. Then the main terms of Laurent expansions of $f_1,f_2$ at $\infty$ are $a_nw^{-n}, b_mw^{-m}$ respectively, so $\{f_1,f_2\}_\infty=(-1)^{nm}a_n^mb_m^{-n}$.
\end{ex}

Let $D\subset S$ be a finite set  containing all points where  $f$ or $g$ is equal to 0 or to $\infty$ (we assume that each function $f$,$g$ is not identically equal to zero at each connected component of $S$).

\begin{Theorem}[Weil reciprocity law] For any couple of meromorphic functions $f$ and $g$ the following relation holds: \begin{equation}\label{W}
\prod_{p\in D} \{f,g\}_p =1.
\end{equation}
\end{Theorem}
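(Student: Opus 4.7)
The plan is to derive the reciprocity identity (\ref{W}) by applying Stokes' theorem to a closed $1$-form built from $\log f$ and $\log g$ on a cut, simply-connected version of $S$. First, present $S$ as a $4\gamma$-gon $P$ with sides identified in pairs (where $\gamma$ is the genus of $S$; for $\gamma=0$, $P$ is a disk), arranged so that every $p\in D$ lies in the interior of $P$. Remove small open disks $U_p$ around each $p$ and add thin slits joining these disks to each other or to $\partial P$ so as to produce a simply-connected region $P^{\circ}\subset P$. On $P^{\circ}$ one may fix single-valued holomorphic branches $L_f=\log f$, $L_g=\log g$.

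Next, apply Stokes' theorem to the antisymmetric holomorphic $1$-form
\[
\omega \;=\; L_f\,dL_g \;-\; L_g\,dL_f,
\]
which is closed on $P^{\circ}$ (because $dL_f\wedge dL_g$ is a $(2,0)$-form on a complex curve, hence vanishes). The local integral around each $p\in D$ is computed directly from the Laurent expansions $f=c_1 u^{k_1}(1+O(u))$, $g=c_2 u^{k_2}(1+O(u))$ of Example~1: the divergent term $k_1k_2(\log u)(du/u)$ appears with the same coefficient in both $L_f\,dL_g$ and $L_g\,dL_f$ and cancels in the antisymmetric difference, leaving
\[
\lim_{r\to 0}\oint_{\partial U_p}\omega \;=\; 2\pi i\bigl(k_2\log c_1 - k_1\log c_2\bigr).
\]

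The remaining boundary contribution $\int_{\partial P^{\circ}\setminus\bigcup\partial U_p}\omega$ is assembled from the identified sides of $P$ and the two banks of each slit; across every such pair $L_f$ and $L_g$ jump by integer multiples of $2\pi i$ determined by the periods of $d\log f$ and $d\log g$. A careful accounting of these monodromies should show that this contribution is congruent to $-2\pi^{2}\sum_{p}k_1^p k_2^p$ modulo $(2\pi i)^{2}\mathbb{Z}$. Combining with the local integrals, dividing by $2\pi i$, and exponentiating then yields
\[
\prod_{p\in D} c_1^{-k_2}c_2^{k_1} \;=\; \prod_{p\in D}(-1)^{k_1^p k_2^p},
\]
which is equivalent to $\prod_{p\in D}\{f,g\}_p=1$.

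The hard part will be precisely this monodromy bookkeeping on the boundary: tracking which branches of $L_f$ and $L_g$ act on each identified side or slit bank and showing that the net contribution is exactly the parity $-2\pi^{2}\sum_p k_1^p k_2^p$ that produces the sign $(-1)^{k_1 k_2}$ in each Weil symbol. The local residue calculation and the invocation of Stokes are routine; all the topological subtlety is concentrated in this boundary analysis, and it is essentially what needs to be abstracted and generalized in the later sections to obtain the topological Parshin reciprocity laws referenced in the introduction.
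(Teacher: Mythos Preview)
Your approach is different from the paper's and is a valid outline, but it is explicitly incomplete where it matters. The paper does not prove Theorem~4 by a direct Stokes computation; instead it derives the Weil law in \S4.2 from two ingredients: Theorem~5 (the existence of a homomorphism $\{f,g\}:H_1(S\setminus D,\Z)\to\C^*$ whose value on $\delta(p)$ is the Weil symbol, proved in \cite{Kh4}) and Lemma~1 (the trivial homological identity $\sum_{p\in D}\delta(p)=0$). Once the map of Theorem~5 is granted, the reciprocity law is a one-line consequence. What your Stokes argument with $L_f\,dL_g-L_g\,dL_f$ is really doing is reconstructing, by hand, the logarithmic functional that underlies Theorem~5; the paper deliberately packages that analytic work into a black box and reduces the reciprocity to pure topology.

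Your local residue computation is correct, and the antisymmetrization does kill the divergent $(\log u)\,du/u$ term. The gap is exactly where you say it is, and it is more delicate than your summary suggests. The slit contributions are not cleanly separated from the arc integrals: the arc near $p$ is not a closed loop but an arc whose endpoints lie on the slit, so the slit integral $-2\pi i\bigl[k_1^p\,\Delta L_g-k_2^p\,\Delta L_f\bigr]$ involves the \emph{values} of $L_g,L_f$ at those endpoints, and these are not integer multiples of $2\pi i$. Only after combining the arc and slit pieces (and, in genus $\gamma>0$, the polygon-side pairs, which contribute integer multiples of $(2\pi i)^2$ via the $a_i,b_i$ periods of $df/f$ and $dg/g$) does the total boundary term reduce to the claimed $-2\pi^2\sum_p k_1^pk_2^p$ modulo $(2\pi i)^2\Z$. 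This bookkeeping can be carried out, but it is the entire content of the proof; stating it as ``should show'' leaves the theorem unproved. If you want a self-contained argument along these lines, you will need to track the branches along a fixed tree of slits and check the cancellations explicitly --- or, as the paper does, invoke the construction in \cite{Kh4} and let Lemma~1 do the work.
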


A compact Riemann surface $S$ equipped with its field of meromorphic functions can be considered as an algebraic curve equipped with its field of rational functions. Under such consideration Theorem 4 becomes purely algebraic.

\begin{Corollary} Consider  $f_1$, $f_2$ from (\ref{f1,f2}) as the functions on  $\C P^1$. Let $X,Y$ be sets of non zero roots of $f_1,f_2$. Assume that $X\cap Y=\emptyset$ and roots $x_i\in X, y_j\in Y$ have multiplicities $m_{x_i}$, $m_{y_j}$. Then according to the Weil reciprocity law and examples 1 and 2

$$b_l^{-k} b_m^{n}\prod_{y_j\in Y} f_1^{m_{y_j}}(y_j)=(-1)^{kl+nm}a_k^{-l}a_n^{m}\prod_{x_i\in X} f_2^{m_{x_i}}(x_i).$$

\end{Corollary}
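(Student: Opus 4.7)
The plan is to deduce the identity as a direct application of the Weil reciprocity law (Theorem 4) to the pair of meromorphic functions $f_1, f_2$ on the compact Riemann surface $S = \C P^1$. Because $f_1$ and $f_2$ are Laurent polynomials in $z$, their zeros and poles on $\C P^1$ are confined to $X \cup Y$ (nonzero zeros) together with the two special points $0$ and $\infty$, so I will take
\[
D = X \cup Y \cup \{0,\infty\}.
\]
The disjointness hypothesis $X \cap Y = \emptyset$ guarantees that at each $x_i \in X$ the function $f_2$ is holomorphic and nonzero, and symmetrically at each $y_j \in Y$ the function $f_1$ is holomorphic and nonzero; thus the hypotheses of Example 1 apply at every finite root.

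Next I would read off the four types of local symbols. At $x_i \in X$, Example 1 (with the roles of $f,g$ specialized to $f_1, f_2$) gives $\{f_1,f_2\}_{x_i} = f_2(x_i)^{m_{x_i}}$. At $y_j \in Y$, the same Example produces $\{f_1,f_2\}_{y_j} = f_1(y_j)^{-m_{y_j}}$. At the two boundary points, Example 2 already records
\[
\{f_1,f_2\}_0 = (-1)^{kl} a_k^{-l} b_l^{k}, \qquad \{f_1,f_2\}_\infty = (-1)^{nm} a_n^{m} b_m^{-n}.
\]

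I would then substitute these four values into the Weil relation \eqref{W}, obtaining
\[
\Bigl(\prod_{x_i \in X} f_2(x_i)^{m_{x_i}}\Bigr)\Bigl(\prod_{y_j \in Y} f_1(y_j)^{-m_{y_j}}\Bigr)\,(-1)^{kl+nm}\,a_k^{-l} b_l^{k} a_n^{m} b_m^{-n} = 1.
\]
Isolating the product of $f_1$-values on one side and the product of $f_2$-values on the other, and multiplying through by $b_l^{-k} b_m^{n}$, yields exactly the stated formula.

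There is really no obstacle here beyond careful bookkeeping; the content has already been packaged in Theorem 4 and Examples 1, 2. The only point that needs a short justification is that $D$ as chosen genuinely contains every point where either $f_1$ or $f_2$ is $0$ or $\infty$ on $\C P^1$, which follows from the factored form $f_1 = a_n z^k\prod(z - x_i)^{m_{x_i}}$ (and analogously for $f_2$) showing that the only possible poles are at $\infty$ when $k<0$ or at $0$ when $k<0$, both of which are included in $D$.
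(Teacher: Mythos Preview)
Your proposal is correct and is exactly the argument the paper intends: the corollary is stated as an immediate consequence of Theorem~4 together with Examples~1 and~2, and you have simply written out the bookkeeping. The only blemish is the garbled sentence at the very end about poles (``at $\infty$ when $k<0$ or at $0$ when $k<0$''), but the intended point---that all zeros and poles of $f_1,f_2$ on $\C P^1$ lie in $X\cup Y\cup\{0,\infty\}$---is clear and correct.
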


Thus  Theorem 1 could be considered as a corollary of the Weil reciprocity law. On the other hand Theorem 1  provides an elementary proof of the Weil reciprocity law in the case under consideration. In the general case the Weil reciprocity law  also can be reduced using Newton polygons to similar elementary arguments \cite{Kh2}.

\subsection {Topological extension of the Weil reciprocity law.} Let $S$ be a  Riemann surface (not necessary compact) and let $D\subset S$ be a discrete subset. The  {\it Leray coboundary operator} $\delta$ associates to every point $p\in D$ an element $\delta(p)\in H_1(S\setminus D,\Z) $  represented by a small circle centered at $p$ with the counterclockwise orientation.   Let $M$ be the multiplicative group of meromorphic functions on $S$  which are regular and nonzero on $S\setminus D$.

\begin{Theorem} To each couple $f,g\in M$ one can associate a map $\{f,g\}: H^1(S\setminus D,\Z)\rightarrow \C^*$ such that the following properties hold: 1) for each $p\in D$ the image $\{f,g\}(\delta(p))$ of the cycle $\delta(p)$ under the map $\{f,g\}$   is equal to the Weil symbol $\{f,g\}_p$; (2) $\{f,g\}= \{g, f\}^{-1}$;
(3) for any triple $f,g,\phi \in M$ the identity $\{f,g\phi\}=\{f,g\}\{f,\phi\}$ holds.
\end{Theorem}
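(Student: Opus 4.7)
The plan is to construct $\{f,g\}$ analytically, via integration of a canonically associated closed $1$-form on the universal cover of $S\setminus D$. Since $f$ and $g$ are nowhere zero and holomorphic on $S\setminus D$, the $1$-forms $d\log f$ and $d\log g$ are holomorphic, and on a complex $1$-manifold $d\log f\wedge d\log g = 0$ for dimensional reasons. On the universal cover $\pi\colon \widetilde{S\setminus D}\to S\setminus D$ one may choose single-valued branches $\widetilde{\log f},\widetilde{\log g}$; the antisymmetrized $1$-form
\[
\omega_{f,g} \;:=\; \widetilde{\log g}\,d\widetilde{\log f} \;-\; \widetilde{\log f}\,d\widetilde{\log g}
\]
is then closed on the cover. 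For $\gamma\in H_1(S\setminus D,\Z)$ represented by a loop with lift $\tilde\gamma$, I would set
\[
\{f,g\}(\gamma) \;:=\; (-1)^{n_f(\gamma)\,n_g(\gamma)}\,\exp\!\left(\frac{1}{2\pi i}\int_{\tilde\gamma}\omega_{f,g}\right),
\]
where $n_f(\gamma), n_g(\gamma)$ are the winding numbers of $f\circ\gamma,\,g\circ\gamma$ around $0\in\C^*$.

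The next step is to check that this is unambiguous. A shift $\widetilde{\log g}\mapsto \widetilde{\log g}+2\pi ik$ changes $\int\omega_{f,g}$ by $-(2\pi i)^2 k\,n_f(\gamma)$, and after dividing by $2\pi i$ and exponentiating we gain the trivial factor $e^{-2\pi i k n_f(\gamma)}=1$; a shift in $\widetilde{\log f}$ is handled identically, and a change of lift of $\gamma$ produces the same kind of shift. Homotopy invariance in the cover is immediate from Stokes' theorem and the closedness of $\omega_{f,g}$. Property~(1) is then verified by direct local computation: with local coordinate $u$ near $p$ and $f=c_1u^{k_1}(1+O(u))$, $g=c_2u^{k_2}(1+O(u))$, the potentially divergent $k_1 k_2\log\varepsilon$ contributions and the imaginary $i\pi k_1 k_2$ terms appearing in $\tfrac{1}{2\pi i}\int\widetilde{\log g}\,d\widetilde{\log f}$ cancel exactly against their counterparts in $\tfrac{1}{2\pi i}\int\widetilde{\log f}\,d\widetilde{\log g}$, leaving the finite value $k_1\log c_2-k_2\log c_1$; exponentiating and multiplying by the prefactor $(-1)^{k_1 k_2}$ recovers the Weil symbol $(-1)^{k_1 k_2}c_1^{-k_2}c_2^{k_1}$.

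Properties~(2) and~(3) then follow formally. Since $\omega_{g,f}=-\omega_{f,g}$ and the sign $(-1)^{n_f(\gamma)n_g(\gamma)}$ is symmetric in $f$ and $g$, we get $\{g,f\}=\{f,g\}^{-1}$. For multiplicativity in the second argument, $\log(g\phi)=\log g+\log\phi$, $d\log(g\phi)=d\log g+d\log\phi$, and $n_{g\phi}=n_g+n_\phi$, so $\omega_{f,g\phi}=\omega_{f,g}+\omega_{f,\phi}$, and the sign factor splits as $(-1)^{n_f(n_g+n_\phi)}=(-1)^{n_f n_g}(-1)^{n_f n_\phi}$, yielding $\{f,g\phi\}=\{f,g\}\{f,\phi\}$. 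The main technical point deserving care is the bookkeeping of branch ambiguities: $\widetilde{\log f}$ and $\widetilde{\log g}$ are each defined only up to $2\pi i\,\Z$, so $\int\omega_{f,g}$ itself is well-defined only modulo $(2\pi i)^2\,\Z$, which is precisely the ambiguity killed by the prefactor $(2\pi i)^{-1}$ and subsequent exponentiation. The antisymmetric choice of integrand is what guarantees both that the delicate $\log\varepsilon$-singularities at the punctures cancel in the local computation and that the non-additive behaviour of the sign factor under concatenation is absorbed into the corresponding cross-terms produced by analytic continuation of $\widetilde{\log f}, \widetilde{\log g}$ along composed loops.
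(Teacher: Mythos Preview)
Your construction is in the spirit of the logarithmic functional of \cite{Kh4} (to which the paper defers for the proof), but there is a genuine gap. As written, $\{f,g\}$ is \emph{not} a homomorphism $H_1(S\setminus D,\Z)\to\C^*$, and this property is exactly what the paper needs in order to deduce Weil reciprocity from $\sum_p\delta(p)=0$. The exponential part is fine: the concatenation defect of $\int_{\tilde\gamma}\omega_{f,g}$ is $(2\pi i)^2\bigl[n_g(\gamma_1)n_f(\gamma_2)-n_f(\gamma_1)n_g(\gamma_2)\bigr]$, which becomes $1$ after dividing by $2\pi i$ and exponentiating. But the prefactor $(-1)^{n_f(\gamma)n_g(\gamma)}$ is quadratic in $\gamma$, and its concatenation defect is $(-1)^{n_f(\gamma_1)n_g(\gamma_2)+n_g(\gamma_1)n_f(\gamma_2)}$, which is in general $-1$ (take any $\gamma_1$ with $n_f=1,\ n_g=0$ and $\gamma_2$ with $n_f=0,\ n_g=1$). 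These two defects do \emph{not} match, so your final claim that the sign non-additivity is ``absorbed into the corresponding cross-terms produced by analytic continuation'' is incorrect.

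One clean fix is to abandon the antisymmetrized integrand and use instead the one-sided form $\widetilde{\log g}\,d\log f$ together with a basepoint correction. Fix $x_0\in S\setminus D$ and a branch of $\log g$ at $x_0$; for a loop $\gamma$ based at $x_0$ set
\[
\{f,g\}(\gamma)\;=\;\exp\!\Bigl(\tfrac{1}{2\pi i}\int_{\tilde\gamma}\widetilde{\log g}\,d\log f\Bigr)\cdot f(x_0)^{-n_g(\gamma)}.
\]
Here the concatenation defect of the integral is $(2\pi i)^2 n_g(\gamma_1)n_f(\gamma_2)$, which again exponentiates to $1$, while $f(x_0)^{-n_g(\gamma)}$ is visibly a homomorphism; so the whole expression is a genuine homomorphism $H_1\to\C^*$. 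The local computation near $p$ now has to include the ``out-and-back'' path from $x_0$ to the small circle (these legs no longer cancel, since $\widetilde{\log g}$ picks up $2\pi i k_2$ after the loop); carrying this through yields exactly $(-1)^{k_1k_2}c_1^{-k_2}c_2^{k_1}\cdot f(x_0)^{k_2}$, and the correction factor removes the last term. The antisymmetry and bimultiplicativity in $f,g$ then follow by the same formal manipulations you already indicated.
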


A simple proof of Theorem 5 can be found in \cite{Kh4}. If the surface $S$ is compact then the following relation between the cycles $\delta(p)$ holds:

\begin{Lemma}
The element $\sum_{p\in D} \delta(p)\in H_1( S\setminus D, \Z)$ is equal to zero.
\end{Lemma}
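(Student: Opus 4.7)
The plan is to realize $\sum_{p\in D}\delta(p)$ as the image, under a connecting homomorphism, of the fundamental class of the complement of small disks around the points of $D$. Since the boundary of a compact oriented surface is a null-homologous cycle in that surface, the sum will vanish.

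First I would reduce to a finite situation: since $S$ is compact and $D\subset S$ is discrete, $D$ is finite. Around each $p\in D$ choose a coordinate chart with local parameter $u_p$ and a small closed disk $\overline{U}_p=\{|u_p|\le \epsilon_p\}$, with the $\overline{U}_p$ pairwise disjoint. Put $S'=S\setminus\bigsqcup_{p\in D}U_p$, a compact oriented topological surface with boundary, whose boundary is the disjoint union of the circles $C_p=\partial \overline{U}_p$. Radial retraction inside each chart shows that the inclusion $S'\hookrightarrow S\setminus D$ is a homotopy equivalence, so it suffices to prove that $\sum_{p\in D}\delta(p)=0$ in $H_1(S',\Z)$.

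Next I would identify each $C_p$, oriented as the boundary of $S'$, with $-\delta(p)$. The orientation of $S$ as a Riemann surface orients $S'$, and the induced boundary orientation on $C_p$ is characterized by the outward-normal-first rule. The outward normal to $S'$ along $C_p$ points into $U_p$, so the boundary orientation on $C_p$ is clockwise as seen from $p$, opposite to the counterclockwise orientation that defines $\delta(p)$. Hence $[\partial S']=-\sum_{p\in D}\delta(p)$ in $H_1(\partial S',\Z)$.

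Finally I would invoke the long exact sequence of the pair $(S',\partial S')$:
$$H_2(S',\partial S';\Z)\xrightarrow{\;\partial\;} H_1(\partial S';\Z)\xrightarrow{\;i_*\;} H_1(S';\Z).$$
The relative fundamental class $[S',\partial S']$ satisfies $\partial[S',\partial S']=[\partial S']$, so $i_*[\partial S']=0$. Combining with the previous step, $\sum_{p\in D}\delta(p)=0$ in $H_1(S',\Z)$, and transporting back through the homotopy equivalence yields the claim in $H_1(S\setminus D,\Z)$.

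The only delicate point is the orientation bookkeeping in the third step; everything else is standard surface topology, and no meromorphic-function or residue theoretic input is needed.
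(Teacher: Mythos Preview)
Your proof is correct and follows essentially the same idea as the paper's one-line argument: the sum $-\sum_{p\in D}\delta(p)$ is the oriented boundary of the compact surface $S'=S\setminus\bigcup_{p\in D}B_p$, hence null-homologous in $S'\simeq S\setminus D$. You have simply spelled out the details (finiteness of $D$, the homotopy equivalence, the orientation convention, and the long exact sequence) that the paper leaves implicit.
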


\begin{proof}Indeed the cycle $-\sum_{p\in D} \delta(p)$ is the boundary of $S\setminus \bigcup_{p\in D}B_{p}$ where $B_{p}$ is the open ball centered in $p$ with the boundary $\delta (p)$.
\end{proof}

The Weil reciprocity law follows from Theorem 5 and Lemma 1: $\prod_{p\in D} \{f,g\}_p=1$ because in $H_1(S\setminus D)$ the identity $\sum_{p\in D} \delta (p)=0$ holds.

Let us reformulate Lemma 1 in the case related to the torus $\C^*= \C P^1\setminus \{0,\infty\}$. We will work with $S= \C P^1$ and $D=\{0,\infty\}\cup D'$ where $D'$ is a finite set containing the sets $X,Y$  of non zero roots of the functions $f_1,f_2$ from  (\ref{f1,f2}),  i.e. containing non zero roots of  $P=f_1f_2$. Let $\Delta$ be the Newton polyhedron of
$P$. Then $\Delta$ is the segment with vertices $A_0 =k+l$ and $A_\infty = n+m$.  Let $T^1\subset \C^*$ be the circle  $|z|=1$  orientated by the form $d(\arg z)$. Let  $T^1_{A_0}, T^1_{A_\infty}$ be the cycles in $\C^*\setminus D'$ given by $\frac{1}{\lambda}T^1, \lambda T^1$  where $|\lambda|$ is  big enough.  Let $k_{A_0}=1$ and $k_{A_\infty}=-1$.
\begin{Theorem}[one dimensional topological theorem]
In the notations above the identity
$\sum_{p\in D'}\delta(p)= -(k_{A_0}T^1_{A_0}+ k_{A_\infty}T^1_{A_\infty})$ holds.
\end{Theorem}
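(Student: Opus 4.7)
The approach is essentially Stokes/boundary on an annulus with punctures. Since $P = f_1 f_2$ is a Laurent polynomial with Newton polyhedron the segment $[A_0,A_\infty] = [k+l,\, n+m]$, its nonzero roots $D'$ form a finite set, and for $|\lambda|$ sufficiently large every point of $D'$ lies in the open annulus $U_\lambda := \{z \in \C^* : 1/|\lambda| < |z| < |\lambda|\}$. Fix such a $\lambda$.

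The plan is to exhibit an explicit 2-chain in $\C^* \setminus D'$ whose boundary realises the desired relation. Let $A_\lambda := \{z : 1/|\lambda| \le |z| \le |\lambda|\}$ be the closed annulus, choose pairwise disjoint small closed balls $B_p$ centred at each $p \in D'$ contained in the interior of $A_\lambda$, and set $\Sigma := A_\lambda \setminus \bigcup_{p \in D'} \operatorname{int}(B_p)$. Then $\Sigma$ is a compact oriented $2$-manifold with boundary lying in $\C^* \setminus D'$, so $\partial \Sigma$ is a $1$-cycle that vanishes in $H_1(\C^* \setminus D', \Z)$.

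The remaining work is to compute $\partial\Sigma$ with the induced orientations. Orient $\Sigma$ as a subset of $\C$ with the standard orientation. The outer boundary of $A_\lambda$ (the circle $|z|=|\lambda|$) inherits the counterclockwise orientation, which matches $d(\arg z)$ and is therefore exactly $T^1_{A_\infty}$; the inner boundary $|z|=1/|\lambda|$ inherits the clockwise orientation, which is $-T^1_{A_0}$. Each removed disk $B_p$ contributes its boundary with clockwise orientation, i.e.\ $-\delta(p)$. Hence
\begin{equation*}
\partial \Sigma \;=\; T^1_{A_\infty} - T^1_{A_0} - \sum_{p \in D'} \delta(p) \;=\; 0 \quad \text{in } H_1(\C^* \setminus D', \Z).
\end{equation*}
Rearranging gives $\sum_{p\in D'}\delta(p) = T^1_{A_\infty} - T^1_{A_0} = -(k_{A_0} T^1_{A_0} + k_{A_\infty} T^1_{A_\infty})$, since $k_{A_0}=1$ and $k_{A_\infty}=-1$.

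The only real subtlety is bookkeeping of orientations and the verification that for large enough $|\lambda|$ the roots are indeed sandwiched between the two circles; the first is a matter of unwinding the convention that $\delta(p)$ is the counterclockwise small loop, the second follows from the fact that the nonzero roots of a Laurent polynomial form a bounded set in $\C^*$ bounded away from $0$. No sophisticated machinery enters; the theorem is essentially the observation that $H_1$ of the twice-punctured sphere $\C^* = \C P^1 \setminus \{0,\infty\}$ is generated by $T^1$ with the relation $T^1_{A_0} \sim T^1_{A_\infty}$, and removing the points of $D'$ simply introduces the generators $\delta(p)$ constrained by the annulus relation above.
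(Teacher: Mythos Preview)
Your proof is correct and is essentially the paper's own argument, carried out directly in $\C^*$ rather than via the compactification: the paper invokes Lemma~1 on $S=\C P^1$ with $D=\{0,\infty\}\cup D'$ (whose proof is exactly the boundary-of-a-punctured-surface computation you perform) and then identifies $\delta(0)=T^1_{A_0}$, $\delta(\infty)=-T^1_{A_\infty}$. Your punctured annulus $\Sigma$ is nothing other than $\C P^1\setminus\bigl(B_0\cup B_\infty\cup\bigcup_{p\in D'}B_p\bigr)$ with its two ``extra'' boundary circles already named $T^1_{A_0}$ and $T^1_{A_\infty}$, so the two arguments coincide.
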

\begin{proof} Theorem 6 immediately follows from Lemma 1 because  $T^1_{A_0}=\delta(0)$ and  $T^1_{A_\infty}=-\delta(\infty)$.
\end{proof}

The identity $\prod_{p\in D} \{f_1,f_2\}^{-1}_p = \{f_1,f_2\}_0\{f_1,f_2\}_\infty$ follows from Theorems 5 and 6. It can be rewritten as
$\Pi_\Delta^{[1]}/\Pi_\Delta^{[2]}=(-1)^{-kl+nm} a_k^{-l}b_l^{k}a_n^mb_m^{-n}.$
Thus we obtained a topological proof of Theorem 1.

\section{Sums over roots of Laurent polynomial and elimination theory}

\subsection{Sums over roots of Laurent polynomial}

Let $z\in \C^*$ be a root of  multiplicity $\mu(z)$ of the Laurent polynomial $P$. Then for any Laurent polynomial $f$ the following theorem holds.

\begin{Theorem}
The sum $\sum f(z)\mu(z)$  over all roots $z\in \C^*$ of $P$ is equal to $-(res_0\omega+res_\infty \omega)$, where
$
\omega=f\frac{dP}{P}= \frac{\partial P}{\partial z}\cdot\frac{fz}{P}\cdot\frac{dz}{z}.
$
\end{Theorem}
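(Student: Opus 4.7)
The plan is to realize this as a direct application of the residue theorem on $\C P^1$ to the meromorphic 1-form $\omega = f\, dP/P$.

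First I would observe that both $f$ and $P$ are Laurent polynomials, hence rational functions on $\C P^1$ which are holomorphic and nonzero on $\C^*$ except that $P$ has zeros at its roots. Thus $\omega$ is a meromorphic 1-form on $\C P^1$ whose poles lie in the finite set $\{0,\infty\} \cup \{z \in \C^* : P(z)=0\}$.

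Next I would compute the local contribution of a nonzero root. If $z_0 \in \C^*$ is a root of $P$ of multiplicity $\mu(z_0)$, write $P(z) = (z-z_0)^{\mu(z_0)} h(z)$ with $h$ holomorphic and nonzero near $z_0$. Then
\begin{equation*}
\frac{dP}{P} = \mu(z_0)\frac{dz}{z-z_0} + \frac{dh}{h},
\end{equation*}
and since $f$ is a Laurent polynomial it is holomorphic at $z_0 \in \C^*$, so
\begin{equation*}
\omega = f(z)\frac{dP}{P} = \frac{f(z_0)\mu(z_0)}{z-z_0}\,dz + (\text{terms regular at } z_0).
\end{equation*}
Hence $\mathrm{res}_{z_0}\omega = f(z_0)\mu(z_0)$. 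At any point of $\C^*$ that is not a root of $P$, the form $\omega$ is holomorphic and contributes nothing.

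Finally I would invoke the residue theorem on the compact Riemann surface $\C P^1$: the sum of residues of $\omega$ over all of its poles equals zero. Splitting the sum into contributions from $\C^*$ (which give $\sum_{P(z)=0,\, z\in\C^*} f(z)\mu(z)$) and contributions from the two points $0$ and $\infty$ yields
\begin{equation*}
\sum_{z\in\C^*,\, P(z)=0} f(z)\mu(z) + \mathrm{res}_0\omega + \mathrm{res}_\infty\omega = 0,
\end{equation*}
which is exactly the claimed identity. There is no real obstacle here; the only point that requires care is keeping track of the fact that $f$ may itself have poles at $0$ or $\infty$, which is why those two residues cannot be dismissed and must appear explicitly in the formula.
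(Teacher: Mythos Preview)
Your proof is correct and follows exactly the approach of the paper, which simply invokes the Cauchy residue formula together with the observation that the residue of $\omega$ at each root $z$ of $P$ equals $f(z)\mu(z)$. You have merely spelled out in more detail the local computation at a root and the splitting of poles into those in $\C^*$ versus $\{0,\infty\}$.
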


\begin{proof} Theorem 7 follows from the Cauchy residue formula since the residue of the form $\omega$ at a root $z$ is equal to $f(z)\mu(z)$.
\end{proof}

Theorem 7 provides an explicit formula for the sum $\sum f(z)\mu(z)$: in contrast to the roots of $P$, the points $0,\infty$ are independent of the coefficients of $P$ and therefore the residues of $\omega$ at $0$ and $\infty$ could be explicitly computed.

\subsection{Elimination theory related to the one-dimensional case}
Let $P$  and  $f$ be  Laurent polynomials as above. Here we explain how to find any symmetric function of the set of values $\{f(z)\}$ over all roots $z\in \C^*$ (each root $z$ is taken with multiplicity $\mu(z)$).

Denote by $f^{(k)}$  the number $ f^{(k)}=\sum_z f^k(z)\mu(z).$
By Theorem 7 one can  calculate $f^{(k)}$ for any $k$ explicitly. The power sum symmetric polynomials form a generating set for the ring of symmetric polynomials.

\begin{Corollary} One can find explicitly all symmetric functions of $\{f(z)\}$, construct a monic polynomial whose roots are $\{f(z)\}$, and eliminate $z$ from the definition of the set $\{f(z)\}$.
In particular, one can compute the products $\Pi_\Delta^{[1]}, \Pi_\Delta^{[2]}$ defined in the section 3.
\end{Corollary}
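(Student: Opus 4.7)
The plan is to reduce the claim to the Newton--Girard identities: the power sums $p_k = f^{(k)} = \sum_z f^k(z)\mu(z)$ and the elementary symmetric polynomials $e_k$ of the multiset $\{f(z)\}_{z\in\C^*,\, P(z)=0}$ determine each other universally over $\Q$. Since Theorem 7 makes every $p_k$ explicit, the $e_k$ become explicit too, and all symmetric functions of $\{f(z)\}$ then follow.

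First I would fix $N = \deg P - \operatorname{ord}_0 P$, which equals the length of the Newton segment $\Delta(P)$ and is the total multiplicity $\sum_z\mu(z)$ of the roots of $P$ in $\C^*$. Next, for each $k=1,\ldots,N$, I would apply Theorem 7 to the Laurent polynomial $f^k$ in place of $f$, obtaining
\[
p_k \;=\; -\bigl(\operatorname{res}_0 \omega_k + \operatorname{res}_\infty \omega_k\bigr), \qquad \omega_k \;=\; f^k\,\frac{dP}{P}.
\]
Because $0$ and $\infty$ are independent of the coefficients of $P$ and $f$, each residue can be read off from the Laurent expansions at those two points, giving $p_k$ as an explicit rational expression in the coefficients of $P$ and $f$ (in fact a polynomial once one clears the leading/trailing coefficients of $P$, since only finitely many terms of the expansions contribute).

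Second, I would invoke the Newton identities
\[
k\,e_k \;=\; \sum_{i=1}^{k} (-1)^{i-1} e_{k-i}\, p_i, \qquad k=1,\ldots,N,
\]
to solve recursively for $e_1,\ldots,e_N$ in terms of the computed $p_1,\ldots,p_N$. Then the monic polynomial having the values $f(z)$ as its roots (with multiplicities $\mu(z)$) is
\[
Q(T) \;=\; \prod_{z} \bigl(T-f(z)\bigr)^{\mu(z)} \;=\; \sum_{k=0}^{N} (-1)^k e_k\, T^{N-k}.
\]
This construction eliminates $z$ from the definition of the set $\{f(z)\}$ because the coefficients of $Q$ are now explicit functions of the coefficients of $P$ and $f$.

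Finally, to obtain the products from Section 3, apply the procedure with $P = f_2$ and $f=f_1$: then $\prod_z f_1(z)^{\mu(z)} = (-1)^N e_N = \Pi_\Delta^{[1]}$, and symmetrically $\Pi_\Delta^{[2]}$ comes from swapping the roles of $f_1$ and $f_2$. There is no real obstacle — the only mild subtlety is that Newton's identities involve division by $1,2,\ldots,N$, so a priori one gets a rational rather than a polynomial expression; but Theorem 2(1) already guarantees polynomiality of the resulting products, so the denominators must cancel, and the algorithm produces them as integer polynomials in the coefficients (after multiplying by the appropriate powers of the extreme coefficients as in~(\ref{prodres})).
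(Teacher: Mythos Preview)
Your proposal is correct and follows essentially the same route as the paper: the paper's argument (given in the paragraph immediately preceding the corollary) is simply that Theorem~7 computes all the power sums $f^{(k)}=\sum_z f^k(z)\mu(z)$, and that the power sums generate the ring of symmetric polynomials. You have spelled out the Newton--Girard step and the application to $\Pi_\Delta^{[1]},\Pi_\Delta^{[2]}$ in more detail than the paper does, but the underlying idea is identical.
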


\section{Developed systems and combinatorial coefficients}

Let $\Delta=(\Delta _1,\dots,\Delta _n)$ be an $n$-tuple of convex polyhedra in  $\Bbb R^n$,
and let $\sum\Delta_i= \Delta _1+\dots+\Delta _n$ be their Minkowski sum. Each face $\Gamma $ of
the polyhedron $\sum\Delta_i$ can be uniquely represented as a sum
$
\Gamma =\Gamma _1+\dots+\Gamma _n,
$
where $\Gamma_i$ is a face of $\Delta_i$.

An $n$-tuple $\Delta$  is called
{\it developed} if for each face $\Gamma$ of the polyhedron
$\sum\Delta_i$, at least one of the terms $\Gamma_i$ in its decomposition is a vertex.

The system of equations $f_1=\ldots=f_n=0$ on $(\C^*)^n$, where $f_1,\ldots,f_n$ are Laurent polynomials, is called developed  if the $n$-tuple  $(\Delta_1,\dots,\Delta_n)$ of their Newton polyhedra is developed.

For a developed $n$-tuple of polyhedra $\Delta$,
a map $h:\partial \sum \Delta_i\rightarrow  \partial \Bbb R^n_+ $ of the
boundary $\partial \sum \Delta_i$ of $\sum \Delta_i$ into the boundary of the
positive octant is called {\it characteristic} if the component
$h_i$ of the map $h=(h_1,\dots,h_n)$ vanishes precisely on the
faces $\Gamma$, for which the $i$-th term $\Gamma_i$ in the
decomposition is a point (a vertex of the polyhedron $\Delta _i$). One can show that   the space of characteristic maps is nonempty and connected.
The preimage of the origin under a characteristic map is
precisely the set of all vertices of the polyhedron $\sum\Delta_i$.

The {\it combinatorial coefficient} $k_A$ of a vertex $A$ of $\sum\Delta_i$ is the local degree of the germ
$$
h:(\partial\sum\Delta_i,A)\to (\partial \Bbb R^n_+,0)
$$
of a characteristic map restricted to the boundary $\partial\sum \Delta_i$ of $\sum \Delta_i$. The combinatorial coefficient is independent of a choice of a characteristic map, but it depends on the choice of  the orientation of $\sum\Delta_i$ and $\Bbb R^n_+$. The first one is given by the orientation of the space of characters on $(\C^*)^n$. The second is defined by an ordering of the polyhedra   $\Delta_1,\dots, \Delta_n$ in $n$-tuple $\Delta$. Both orientations are an arbitrary choice, and after changing each of them the combinatorial coefficient will change sign. For more detailed discussion of combinatorial coefficients see \cite{GKh1},\cite{Sop},\cite{Kh2}.

Let us discuss combinatorial coefficients in the two-dimensional case. Two polygons $\Delta_1, \Delta_2 \subset \R^2$ are developed if and only if they do not have parallel sides with the same direction of the outer normals (see figure \ref{comb}). If $\Delta_1,
 \Delta_2$ are developed  each side of $\Delta_1+\Delta_2$ comes either from $\Delta_1$ or from $\Delta_2$. That is each side of $\Delta_1+\Delta_2$ is either the sum of a side of $\Delta_1$ and a vertex of $\Delta_2$, or the sum of a vertex of $\Delta_1$ and a side of $\Delta_2$.

The two types of sides are labeled by 2 (dashed in the picture) and 1 (solid in the picture) respectively. Giving $\R^2$ and the positive octant the standard orientations we can find the local degree of a characteristic map at a vertex. It is equal to 0 if neighbouring edges have the same label, to $+1$ if the label at $A$ is changing from 2 to 1 in the counter clockwise direction, and to $-1$ if it is changing from 1 to 2.

So the combinatorial coefficient $k_A$ of a vertex $A\in \Delta$ is equal to 0 if neighbouring edges have the same label, and $+1$ or $-1$ (depending on the orientation) if the labeling changes at $A$. The only possible value of combinatorial coefficient in dimension 2 is $-1, 0$ or $+1$ because the local mapping degree of one dimensional manifolds could take only these values. The combinatorial coefficient in dimension $\geq 3$ could be any integer number.

\begin{figure}
\begin{center}
\includegraphics*[scale=0.15]{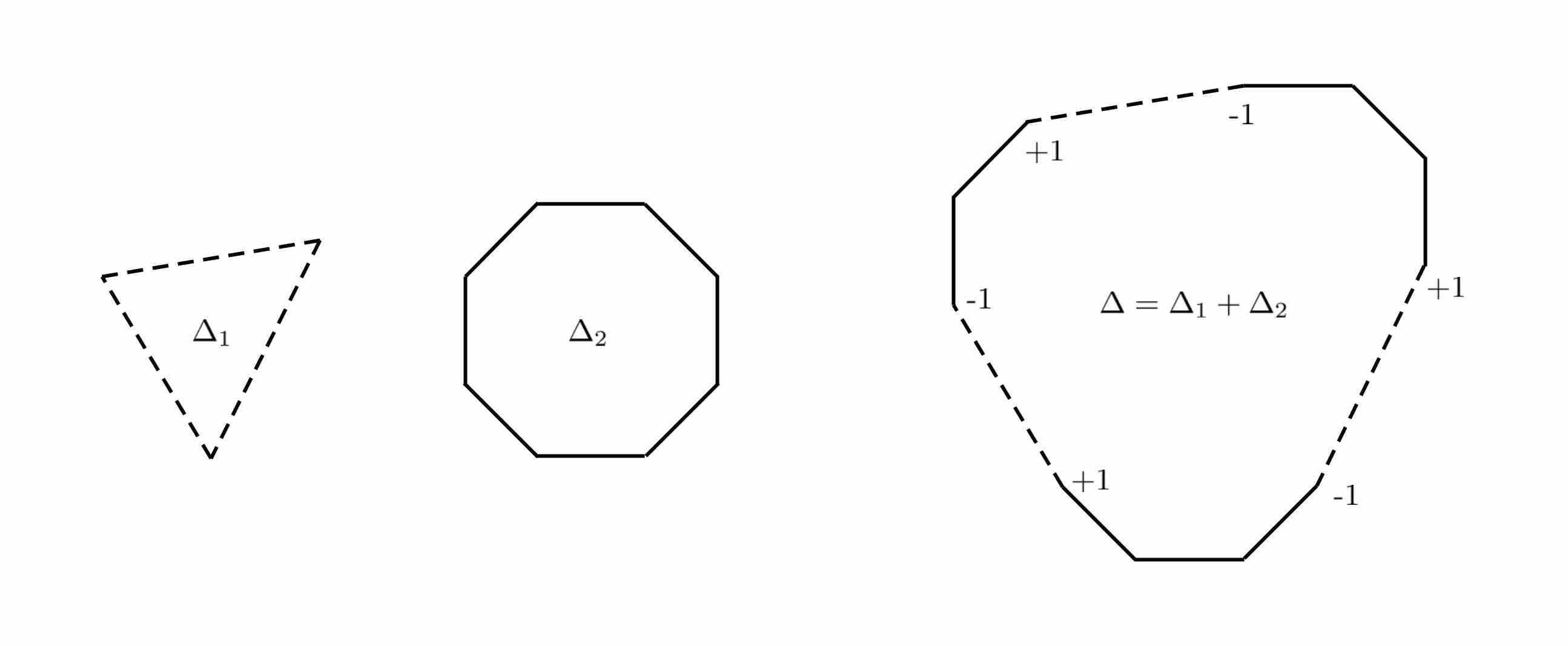}
\caption{Combinatorial coefficient in dimension 2.}\label{comb}
\end{center}
\end{figure}

\section{Topological theorem.}

\subsection{Grothendieck cycle.}
Let $z$ be an isolated root of a system $f_1=\ldots=f_n=0$ on $(\C^*)^n$ where $f_1,\ldots,f_n$ are Laurent polynomials. The Grothendieck cycle $\gamma_z$ is a class in the group of $n$-dimensional homologies of the complement $(U\setminus\Gamma)$ of a small neighborhood $U$ of the point $z$, of the hyperplane $\Gamma$ defined by the equation $P=f_1\ldots f_n=0$. For almost all small enough $\epsilon=(\epsilon_1,\ldots\epsilon_n) \in \R_+^n$, the subset $\gamma_{z,\epsilon}$ defined by $|f_i|=\epsilon_i$ is a smooth compact real submanifold of $(U\setminus \Gamma)$. The Grothendieck cycle $\gamma_z$ is the cycle of the submanifold $\gamma_{z,\epsilon}$ for small enough $\epsilon$ oriented by the form $d(argf_1)\wedge\ldots\wedge d(argf_n)$. The orientation of the Grothendieck cycle depends on the order of the equations $f_1=0,\ldots,f_n=0$.

\subsection{The cycle related to a vertex of the Newton polyhedron.}

Let $\Gamma\subset (\C^*)^n$ be a hypersurface $P=0$, where $P$ is a Laurent polynomial with Newton polyhedron $\Delta(P)$. Let $T^n\subset (\C^*)^n$ be the torus  $|z_1|=\ldots=|z_n|=1$  orientated by $\omega=d(\arg z)\wedge\ldots\wedge d(\arg(z_n)$. The sign of $\omega$ depends on the order of variables, so the sign of a cycle $T^n$ depends on the orientation of the space $\R^n$ of characters on $(\C^*)^n$.

For every vertex $A$ of $\Delta(P)$ we will assign an $n$-dimensional cycle $T_A^n$ in  $ (\C^*)^n \setminus \Gamma$ defined up to homological equivalence. For this denote by $\xi_A=(\xi_1,\ldots,\xi_n)$  an integer covector  such that the inner product of $x\in \sum\Delta_1$ with $\xi_A$ attains its maximum value at  $A$. Consider the 1-parameter subgroup $\lambda(t)=(t^{\xi_1},\ldots,t^{\xi_n})$ of $(\C^*)^n$. For $t$ with large enough absolute value $|t|$ the translation $\lambda(t)T^n$ of $T^n$ by the subgroup $\lambda(t)$ does not intersect  the hypersurface $\Gamma$.

Let us define $T^n_A$ as a cycle $\lambda(t)T^n$ with $|t|$ large enough. The definition makes sense  because the homology class of $\lambda(t)T^n$ in $ (\C^*)^n \setminus \Gamma$  does not depend on the choice of $\xi_A$ and $t$, provided $|t|$ is large enough.

\subsection{Topological theorem for $n$ Laurent polynomials.}

Let  $f_1,\ldots,f_n$ be Laurent polynomials with developed Newton polyhedra $\Delta_1,\ldots,\Delta_n$. Let $\Gamma$ be a hypersurface in $(\C^*)^n$ defined by the equation  $P= f_1\ldots f_n=0$. In \cite{GKh},\cite{GKh1} the following theorem is proved.

\begin{Theorem}
In  $ (\C^*)^n \setminus \Gamma$  the sum of the Grothendieck cycles $\gamma_z$ over  all roots $z$ of the system $f_1=\ldots=f_n=0$ is homologous to the cycle $(-1)^n\sum k_AT^n_A$, where the sum is taken over all vertices  $A$ of $\sum \Delta_i$ and $k_A$ is the combinatorial coefficient  at the vertex $A$.
\end{Theorem}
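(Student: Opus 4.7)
The plan is to realize the claimed homology as the boundary of an explicit $(n{+}1)$-chain $C\subset(\C^*)^n\setminus\Gamma$, constructed by pulling back a $1$-chain in $\R^n_+$ along
\[
\Phi(z)=\bigl(|f_1(z)|,\ldots,|f_n(z)|\bigr)\colon (\C^*)^n\setminus\Gamma\to \R^n_+.
\]
The key inputs are the developed hypothesis on $\Delta_1,\ldots,\Delta_n$ and a characteristic map $h:\partial\sum\Delta_i\to\partial\R^n_+$ supplied by section 6.

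First I would identify the two reference fibres of $\Phi$. For $\epsilon=(\epsilon_1,\ldots,\epsilon_n)$ with each $\epsilon_i>0$ small enough, the fibre $\Phi^{-1}(\epsilon)$ splits as the disjoint union $\bigsqcup_z\gamma_z$ of Grothendieck cycles (one component per isolated root of the system), oriented by $d(\arg f_1)\wedge\cdots\wedge d(\arg f_n)$. At the other end, the developed hypothesis lets one decompose a neighbourhood of infinity in $(\C^*)^n$ into regions indexed by the vertices $A=A_1+\cdots+A_n$ of $\sum\Delta_i$: on the region associated to $A$, each $f_i$ is dominated by its vertex monomial $c_{A_i}z^{A_i}$, so $\Phi$ is well approximated by the monomial map $z\mapsto(|c_{A_i}z^{A_i}|)_i$, and the cycle $T^n_A$ lies inside this region.

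Next, I would choose a $1$-chain $\sigma\subset\R^n_+$ starting at $\epsilon$ and ending at the boundary $\partial\R^n_+$ so that its pullback $C=\Phi^{-1}(\sigma)$ is contained in $(\C^*)^n\setminus\Gamma$. The path $\sigma$ is modelled on $h$: near each vertex $A$ of $\sum\Delta_i$ the local degree of $h$ at $A$ is, by definition, the combinatorial coefficient $k_A$, and the asymptotic monomial picture above translates this into the statement that the outer contribution of $\partial C$ at $A$ is $k_A T^n_A$. The overall sign $(-1)^n$ comes from comparing the Grothendieck orientation $d(\arg f_1)\wedge\cdots\wedge d(\arg f_n)$ on $\gamma_z$ with the torus orientation $d(\arg z_1)\wedge\cdots\wedge d(\arg z_n)$ on $T^n_A$; near the vertex $A$ these frames differ by the Jacobian of the dominant monomial substitution, which produces the $(-1)^n$.

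The hard part is carrying out this construction globally, so that $C$ truly avoids $\Gamma$ and so that the local vertex pictures glue into a single chain. This is exactly what the developed hypothesis buys: it guarantees that the vertex cones of $\sum\Delta_i$ tile a neighbourhood of infinity in $\R^n$ compatibly with a single characteristic map $h$, so that $\sigma$ can be picked to miss $\Gamma$ uniformly across all vertices. Once that gluing is done, the combinatorial coefficients $k_A$ emerge automatically as the local degrees of $h$ and the asserted formula follows, along the lines of the topological arguments in \cite{GKh},\cite{GKh1}.
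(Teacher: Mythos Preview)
The paper does not give its own proof of this theorem: it is introduced with ``In \cite{GKh},\cite{GKh1} the following theorem is proved'' and no argument follows. So there is no in-paper proof to compare against; the result is simply imported from those references.

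That said, your outline contains a concrete gap. You ask for a $1$-chain $\sigma\subset\R^n_+$ ending on $\partial\R^n_+$ whose preimage $C=\Phi^{-1}(\sigma)$ lies in $(\C^*)^n\setminus\Gamma$. But $\Phi$ maps $(\C^*)^n\setminus\Gamma$ into the \emph{open} orthant: a point has $|f_i(z)|=0$ exactly when $z\in\{f_i=0\}\subset\Gamma$, so $\Phi^{-1}(\partial\R^n_+)=\emptyset$ on the domain you allow, and approaching $\partial\R^n_+$ in the target forces the preimage toward $\Gamma$, not toward infinity. Hence no such $\sigma$ produces the ``outer'' boundary pieces $k_A T^n_A$ you want. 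The related sentence ``$\sigma$ is modelled on $h$'' conflates two different objects: the characteristic map $h:\partial\sum\Delta_i\to\partial\R^n_+$ is a purely combinatorial device ($h_i$ vanishes on a face when the summand $\Gamma_i$ is a vertex of $\Delta_i$), and this has no direct interpretation as ``$|f_i|$ is small'' in the target of $\Phi$.

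The actual argument in \cite{GKh1} runs through a toric compactification of $(\C^*)^n$ adapted to the normal fan of $\sum\Delta_i$. The developed hypothesis is precisely what guarantees that the closure of $\Gamma$ avoids the torus-fixed points; the cycles $T^n_A$ are small real tori around those fixed points, and the identity is a global homology relation in the compactified complement --- the $n$-dimensional analogue of the one-variable Lemma~1/Theorem~6 in Section~4.2 of this paper. The combinatorial coefficients $k_A$ then enter as local degrees at the fixed points. Your instinct to build a cobordism between $\sum_z\gamma_z$ and $\sum_A k_A T^n_A$ is correct, but the cobordism lives in the toric compactification, not over a path in $\R^n_+$ under $\Phi$.
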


The signs in the topological theorem depend on the choice of order of variables $z_1,\ldots, z_n$ and on the choice of order of functions  $f_1,\ldots,f_n$. In the statement these orders are fixed in an arbitrary way. Changing the order of the variables changes the sign of cycle at vertices $T^n_A$ and all of the combinatorial coefficients.  Choosing a different order for the equations will change the signs of all the Grothendieck cycles $\gamma_z$, and all of the combinatorial coefficients as well.

\subsection{Topological theorem for $(n+1)$ Laurent polynomials.}
We will say that the collection of polyhedra $\Delta_1,\ldots,\Delta_{n+1}$ is {\it $i$-developed } if the collection  with $\Delta_i$ removed is developed. In this section we present a version of the topological theorem applicable for a collection of $n+1$ Laurent polynomials which is $i$-developed and $j$-developed for some $1\leq i<j\leq n$

 Let $i,j$ be indexes such that $1\leq i< j\leq n+1$. We will associate with $i,j$ the permutation $\{k_1,\dots,k_{n+1}\}$  of $\{1,\dots,n+1\}$ defined by the following relations $k_1=i, k_2=j$, and $k_3<\dots<k_{n+1}$.

The following lemma is obvious.

\begin{Lemma}
A collection of $(n+1)$ polyhedra $\Delta_1,\ldots,\Delta_{n+1}$ in $\R^n$ is $i$-developed and $j$-developed for some $i<j$ if and only if the collection $(\Delta_{i}+\Delta_{j}), \Delta_{k_3},\dots, \Delta_{k_{n+1}}$ is developed.
\end{Lemma}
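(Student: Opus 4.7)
The plan is to translate both the hypothesis and the conclusion into the covector form of the developed condition given in the introduction, and then observe that a covector attains its maximum on $\Delta_i+\Delta_j$ at a vertex if and only if it attains its maximum on each of $\Delta_i$ and $\Delta_j$ at a vertex. Everything then reduces to an elementary case analysis on covectors.

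First I would recall the equivalent covector formulation: a collection of polyhedra is developed precisely when, for every covector $v$, there is at least one polyhedron in the collection on which the linear functional $\langle v,\cdot\rangle$ attains its maximum at a single vertex. Thus $\Delta$ is $i$-developed and $j$-developed iff, for every covector $v$, at least one of the $\Delta_k$ with $k\neq i$ has vertex-max, \emph{and} at least one of the $\Delta_k$ with $k\neq j$ has vertex-max.

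The key Minkowski-sum observation is the standard fact that the face of $\Delta_i+\Delta_j$ on which $\langle v,\cdot\rangle$ is maximized decomposes as the Minkowski sum of the corresponding faces of $\Delta_i$ and $\Delta_j$. Consequently this face is a single point (a vertex of $\Delta_i+\Delta_j$) if and only if both summand faces are vertices of $\Delta_i$ and $\Delta_j$, respectively.

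With these two ingredients in hand, I would check both implications directly. For ($\Rightarrow$), fix a covector $v$. If some $\Delta_k$ with $k\notin\{i,j\}$ has vertex-max at $v$, we are done; otherwise the $i$-developed hypothesis forces $\Delta_j$ to have vertex-max at $v$ and the $j$-developed hypothesis forces $\Delta_i$ to have vertex-max at $v$, so by the Minkowski-sum observation $\Delta_i+\Delta_j$ has vertex-max at $v$. For ($\Leftarrow$), again fix $v$: either some $\Delta_k$ with $k\notin\{i,j\}$ has vertex-max at $v$, in which case both the $i$- and $j$-developed conditions are satisfied via this same $k$, or $\Delta_i+\Delta_j$ has vertex-max at $v$, in which case both $\Delta_i$ and $\Delta_j$ have vertex-max, giving $\Delta_j$ for $i$-developedness and $\Delta_i$ for $j$-developedness.

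There is no real obstacle here; the only thing to be careful about is invoking the equivalence between the face-decomposition definition stated in Section 6 and the covector definition quoted in the introduction, and using it consistently. Because the lemma is flagged as obvious in the excerpt, I would keep the write-up to just a few lines, essentially the two paragraphs of case analysis above, preceded by a single sentence about the Minkowski sum of supporting faces.
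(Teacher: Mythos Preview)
Your proposal is correct: the covector reformulation together with the fact that the $v$-maximal face of $\Delta_i+\Delta_j$ is the Minkowski sum of the $v$-maximal faces of $\Delta_i$ and $\Delta_j$ (hence a vertex iff both summand faces are vertices) reduces both implications to the short case analysis you describe. The paper supplies no argument at all---it simply declares the lemma obvious---so your write-up is exactly the kind of routine verification the authors had in mind, and nothing further is needed.
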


Let $\Delta_1,\ldots,\Delta_{n+1}$ be an $i$-developed and $j$-developed collection of Newton polyhedra. Let us denote by $k^{i,j}_A$ the combinatorial coefficient of a vertex $A\in \sum\Delta_i$ associated with  the  the collection $\Delta_i+\Delta_j,\Delta_{k_3},\ldots,\Delta_{k_{n+1}}$.
Let $f_1,\dots,f_{n+1}$ be  Laurent polynomials with Newton polyhedra $\Delta_1,\dots,\Delta_{n+1}$ such that the system
\begin{equation}\label{all f} f_1=\dots=f_{n+1}=0
\end{equation} is not consistent in $(\C^*)^n$.
Denote by $X_i$ the set of all roots $x$ of the system
\begin {equation}\label{X_i}
f_1=\dots=\hat f_i=\dots=f_{n+1}=0,
\end{equation}
where the equation $f_i=0$ is removed. Denote by  $X_j$ the set of all roots $y$ of the system
\begin{equation}\label{X_j}
f_1=\dots=\hat f_j=\dots=f_{n+1}=0,
\end{equation}
where the equation $f_j=0$ is removed.

\begin{Theorem}
 Assume that the Newton polyhedra $\Delta_1,\dots,\Delta_{n+1}$ of the Laurent polynomials $f_1,\dots,f_{n+1}$ are $i$-developed and $j$-developed and that the system (\ref {all f}) is not consistent in $(\C^*)^n$. Then in the group $H_n((\C^*)^n \setminus \Gamma,\Bbb Z)$ the identity
$$
(-1)^{j-2}\sum _{x\in X_i}\gamma_x+ (-1)^{i-1}\sum _{y\in X_j}\gamma_y=(-1)^n\sum k_A^{i,j}T^n_A
$$
holds, where $\gamma_x$ and $\gamma_y$ are the Grothendieck cycles of the roots $x$ and $y$ of the systems (\ref{X_i}), (\ref{X_j}) and the summation on the right is taken over all vertices  $A$ of $\Delta=\Delta_1+\dots+\Delta_n$.
\end{Theorem}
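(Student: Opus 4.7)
The plan is to apply Theorem 8 to the $n$-tuple of Laurent polynomials $(f_if_j,f_{k_3},\dots,f_{k_{n+1}})$. By Lemma 2 its Newton polyhedra $(\Delta_i+\Delta_j,\Delta_{k_3},\dots,\Delta_{k_{n+1}})$ form a developed collection whose combinatorial coefficients at the vertices of the Minkowski sum are, by definition, the $k_A^{i,j}$. The associated hypersurface $(f_if_j)\cdot f_{k_3}\cdots f_{k_{n+1}}=0$ is precisely $\Gamma=\{f_1\cdots f_{n+1}=0\}$, and the cycles $T^n_A$ depend only on the Newton polyhedron of the product, which is unchanged. The zero set of the new system in $(\C^*)^n$ decomposes as the disjoint union $X_i\sqcup X_j$: $f_if_j=0$ means $f_i=0$ or $f_j=0$, so imposing $f_{k_3}=\dots=f_{k_{n+1}}=0$ in addition yields the systems defining $X_j$ or $X_i$, respectively, with $X_i\cap X_j=\emptyset$ by the inconsistency of (\ref{all f}).

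The technical heart of the proof is to identify the Grothendieck cycles $\tilde\gamma_z$ of the new system with signed multiples of $\gamma_x$ and $\gamma_y$. Fix $x\in X_i$, so that $f_i(x)\ne 0$. On a small ball $U$ around $x$, consider the family of systems $\bigl(\psi_t,\,f_{k_3},\dots,f_{k_{n+1}}\bigr)$ with $\psi_t=\bigl((1-t)f_i+tf_i(x)\bigr)f_j$, $t\in[0,1]$. For $U$ small, the factor $(1-t)f_i+tf_i(x)$ stays close to $f_i(x)\ne 0$, so $\psi_t$ has $x$ as its only zero in $U$ and the hypersurface $\{\psi_t\cdot f_{k_3}\cdots f_{k_{n+1}}=0\}\cap U$ equals the fixed locus $\Gamma\cap U$ for all $t$. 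The Grothendieck cycles therefore form a continuous family in the unchanged complement $U\setminus\Gamma$, identifying $\tilde\gamma_x$ (the $t=0$ cycle) with the Grothendieck cycle of $(f_i(x)f_j,f_{k_3},\dots,f_{k_{n+1}})$ at $t=1$. Since $f_i(x)$ is a nonzero constant, this coincides with the Grothendieck cycle of $(f_j,f_{k_3},\dots,f_{k_{n+1}})$ oriented by $d\arg f_j\wedge d\arg f_{k_3}\wedge\dots\wedge d\arg f_{k_{n+1}}$. The symmetric argument for $y\in X_j$, using $f_j(y)\ne 0$, produces the Grothendieck cycle of $(f_i,f_{k_3},\dots,f_{k_{n+1}})$ at $y$ oriented by $d\arg f_i\wedge d\arg f_{k_3}\wedge\dots\wedge d\arg f_{k_{n+1}}$.

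It remains to compare these orientations with those of $\gamma_x$ and $\gamma_y$. The cycle $\gamma_x$ is oriented by $d\arg f_1\wedge\dots\wedge\widehat{d\arg f_i}\wedge\dots\wedge d\arg f_{n+1}$; since $i<j$, the factor $d\arg f_j$ sits in position $j-1$ of this ordering, and bringing it to the front costs $j-2$ transpositions of adjacent $1$-forms, giving $\tilde\gamma_x=(-1)^{j-2}\gamma_x$. Similarly $\gamma_y$ is oriented by $d\arg f_1\wedge\dots\wedge\widehat{d\arg f_j}\wedge\dots\wedge d\arg f_{n+1}$, in which $d\arg f_i$ sits at position $i$, so $\tilde\gamma_y=(-1)^{i-1}\gamma_y$. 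Substituting these identifications into the identity $\sum_z\tilde\gamma_z=(-1)^n\sum_A k_A^{i,j}T^n_A$ supplied by Theorem 8 produces the stated equation.

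I expect the main obstacle to be making the homotopy argument fully rigorous in the non-simple case, where $(f_j,f_{k_3},\dots,f_{k_{n+1}})$ does not form a system of local coordinates at $x\in X_i$ and one cannot reduce to a model computation on $\C^n$. Here one must verify cycle-class continuity directly in the complement $U\setminus\Gamma$: the constancy of $\Gamma\cap U$ along the deformation is the essential input, but one must also check that the defining locus $\{|\psi_t|=\epsilon_1,|f_{k_\ell}|=\epsilon_\ell\}$ remains a smooth compact manifold representing the expected Grothendieck class for all $t\in[0,1]$ and all sufficiently small $\epsilon$, rather than appealing to any local change of coordinates.
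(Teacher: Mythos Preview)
Your proof is correct and follows essentially the same route as the paper: apply Theorem~8 to the developed system $(f_if_j,f_{k_3},\dots,f_{k_{n+1}})$, split its root set as $X_i\sqcup X_j$, and identify the resulting Grothendieck cycles with $(-1)^{j-2}\gamma_x$ and $(-1)^{i-1}\gamma_y$ by reordering the defining functions. The only difference is that the paper simply asserts this last identification (treating invariance of the Grothendieck cycle under multiplication of a component by a local unit as a known fact, so that the sign comes purely from the permutation), whereas you justify it via your homotopy argument---your final-paragraph concern is therefore not an obstruction, and the paper does not address it either.
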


\begin{proof}
According to the topological theorem the sum of the Grothendieck cycles $\gamma_z$ over the set $X_{i,j}$ of all roots $z$ of the system
\begin{equation}\label{i,j}
f_if_j=f_{k_1}=\dots=f_{k_{n+1}}=0
\end{equation}
is equal to $(-1)^n k_A^{i,j}T_A$. The set $X_{i,j}$ is equal to $X_i\cup X_j$ where $X_i$ is the set of roots $x$ of the system  $f_j=f_{k_1}=\dots=f_{k+1}=0$  and $X_j$ is the set of roots $y$ of the system $f_i=f_{k_1}=\dots=f_{k+1}=0$. If $z=x\in X_i$ then the cycle $\gamma_z$ is equal to the cycle $(-1)^{j-2}\gamma_x$ for the system (\ref{X_i}). The sign $(-1)^{j-2}$ in the identity appears because of the change of the order equations from $f_j=f_{k_1}=\dots=f_{k_{n+1}}=0$ to $f_1=\dots=\hat f_i=\dots=f_{n+1}=0$. In a similar way if $z=y\in X_j$ then the cycle $\gamma_z$ is equal to the cycle $(-1)^{i-1}\gamma_y$ for the system (\ref{X_j}).
\end{proof}

\section {Parshin reciprocity laws}

\subsection{Analog of the determinant of $n+1$ vectors in $n$-dimesional space over  $\Bbb F_2$.}

A determinant of $n$ vectors in $n$-dimensional space $L_n$ over the field $\Bbb F_2=\Z/2\Z$ is unique non-zero multilinear function on $n$-tuples of vectors in $L_n$ which is invariant under the $GL(n,\Bbb F_2)$ action and which has value 0 if the $n$-tuple is dependent.

It turns out that there exists unique function on $(n+1)$-tuples of vectors in $L_n$ having exactly the same properties (see \cite{Kh2}, \cite{Kh5}).

\begin{Theorem}
There exists a unique non zero function  $D$ on $(n+1)$-tuples of vectors in $L_n$ satisfying the following properties:

(i) $D$  is $GL(n,\Bbb F_2)$ invariant, i.e. for any $A\in GL( n,\Bbb F_2)$ the equality $D(k_1,\ldots,k_{n+1})=D(A(k_1),\ldots,A(k_{n+1}))$ holds;

(ii) if the rank of $k_1,\ldots,k_{n+1}$ is $<n$ then $D(k_1,\ldots,k_{n+1})=0$;

(iii) $D$ is multilinear.
\end{Theorem}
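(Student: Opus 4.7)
The plan is to prove existence by exhibiting an explicit $D$ and prove uniqueness by reducing, via multilinearity and property (ii), to a finite combinatorial check on coefficients in a fixed basis.

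\emph{Existence.} Take $\Bbb F_2$ as the codomain and set
\[
D(k_1,\dots,k_{n+1}) \;=\; \sum_{\ell \in L_n^*\setminus\{0\}} \prod_{j=1}^{n+1} \ell(k_j),
\]
the sum ranging over nonzero $\Bbb F_2$-linear functionals on $L_n$. Multilinearity (iii) is manifest. For $GL(n,\Bbb F_2)$-invariance (i), the substitution $k_j\mapsto Ak_j$ replaces each summand $\prod\ell(k_j)$ by $\prod(\ell\circ A)(k_j)$, and since $\ell\mapsto\ell\circ A$ is a bijection on $L_n^*\setminus\{0\}$, the total is unchanged. For the vanishing property (ii), if the $k_j$'s lie in a subspace $V\subsetneq L_n$ of dimension $d<n$, group the summands by the restriction $\ell|_V\in V^*$: each nonzero $\ell_V$ has exactly $2^{n-d}$ lifts to $L_n^*$ contributing the same product, so the total contribution is $2^{n-d}\prod\ell_V(k_j)\equiv 0\pmod 2$, and the $\ell|_V=0$ summands vanish automatically. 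Non-vanishing follows from $D(e_1,\dots,e_n,e_1)=1$: writing $\ell=(c_1,\dots,c_n)$, the product reduces in $\Bbb F_2$ to $c_1^2 c_2\cdots c_n = c_1 c_2\cdots c_n$, so only the all-ones functional contributes.

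\emph{Uniqueness.} Let $D'$ also satisfy (i)--(iii). Setting $c=D'(e_1,\dots,e_n,e_1)$ and $D''=D'-cD$, it suffices to show $D''\equiv 0$. By (iii), $D''$ is determined by the coefficients $b_I=D''(e_{i_1},\dots,e_{i_{n+1}})$ indexed by $I=(i_1,\dots,i_{n+1})\in\{1,\dots,n\}^{n+1}$. Property (ii) forces $b_I=0$ whenever $\{i_1,\dots,i_{n+1}\}$ omits some element of $\{1,\dots,n\}$, leaving only the $n(n+1)!/2$ multi-indices in which one element of $\{1,\dots,n\}$ appears exactly twice. Two kinds of identifications come from (i): the permutation subgroup $S_n\subset GL(n,\Bbb F_2)$ gives $b_I=b_{\sigma(I)}$ (applying $\sigma$ entrywise), while the elementary shears $\tau\colon e_i\mapsto e_i+e_j$, expanded by multilinearity with rank-deficient summands discarded via (ii), yield further linear relations that identify $b_I$'s differing only by a permutation of positions within $I$. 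Combined with the hypothesis $b_{(1,2,\dots,n,1)}=0$, these force all $b_I=0$.

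\emph{Main obstacle.} The crux is the combinatorial step in the uniqueness argument: showing that the shear-expansion relations, together with the $S_n$-identifications and the vanishings from (ii), really collapse all $n(n+1)!/2$ surviving coefficients to a single one. I expect the cleanest route to be induction on $n$: restricting $D''$ to $(n+1)$-tuples of the form $(k_1,\dots,k_n,e_1)$ produces a multilinear function on $L_n^n$ satisfying vanishing conditions to which an inductive version of the statement applies. An alternative route bypasses the bookkeeping via representation theory, identifying the space of admissible $D$'s with a Hom-space between modular $GL(n,\Bbb F_2)$-representations and bounding its dimension by one through a character computation; the explicit $D$ of the existence step then pins the dimension to exactly one.
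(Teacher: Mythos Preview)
The paper does not actually prove this theorem: it cites \cite{Kh2}, \cite{Kh5} and then records two explicit formulas for $D$ (one via the unique linear dependency among a rank-$n$ tuple, one determinantal). Your existence argument is a third, independent formula; it is correct and in some ways cleaner than the paper's two, since multilinearity and $GL$-invariance are immediate and the rank-deficiency vanishing is a two-line counting argument.

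Your uniqueness outline is on the right track, but the step you flag as the ``main obstacle'' is much shorter than you fear, and neither induction nor modular representation theory is needed. After imposing $S_n$-invariance, the surviving coefficients $b_I$ depend only on the unordered pair $\{p,q\}\subset\{1,\dots,n+1\}$ of positions at which the repeated basis vector sits; write $b_{\{p,q\}}$ for this common value. Now fix three distinct positions $p,q,r$, take a tuple with the repeat at $\{p,q\}$ equal to $e_a$ and with $e_b$ in position $r$, and apply the shear $e_a\mapsto e_a+e_b$. Expanding by multilinearity gives four terms: the original tuple, two tuples with repeats at $\{q,r\}$ and $\{p,r\}$ respectively, and one rank-deficient tuple (where $e_a$ no longer appears). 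Equating with the original value yields
\[
b_{\{p,q\}}+b_{\{q,r\}}+b_{\{p,r\}}+0 \;=\; b_{\{p,q\}},\qquad\text{hence}\quad b_{\{p,r\}}=b_{\{q,r\}}.
\]
Thus for $n\ge 2$ all $b_{\{p,q\}}$ coincide, and since $b_{\{1,n+1\}}=D''(e_1,\dots,e_n,e_1)=0$ by construction, $D''\equiv 0$. The case $n=1$ is trivial (the space of bilinear forms on $\Bbb F_2$ is one-dimensional). This closes the gap and gives a complete, self-contained proof that the paper itself does not supply.
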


Let us present two explicit formulas for $D$:

I) If the rank of  $k_1,\ldots,k_{n+1}$ is $ <n$ then $D(k_1,\dots,k_{n+1})=0$ (see (ii)). If the rank is $n$ then there is a unique non-zero collection $\lambda_1,\dots, \lambda_{n+1}\in \Bbb F_2$ such that $\lambda_1 k_1+\dots +\lambda_{n+1}k_{n+1}=0$ and . In this case  $D(k_1,\dots,k_{n+1})={1+\lambda_1+\dots+\lambda_{n+1}}$.

II) If on the space $L_n$ the coordinates are fixed, then
$ D(k_1,\ldots,k_{n+1})= \sum_{j>i}\Delta_{ij},$
where $\Delta_{ij}$ is the determinant of $(n\times n)$ matrix whose first $n-1$ columns are coordinates of vectors $k_1,\ldots,k_{n+1}$ with vectors $k_i, k_j$ removed and the last column is the coordinatewise product of vectors $k_i$ and $k_j$.

For a vector $v\in \Z^n$ let $\tilde v\in \Bbb F_2^n$ be its mod 2 reduction. For an $(n+1)$-tuple of vectors $v_1\ldots v_{n+1} \in \Z^n$ we define $D(v_1\ldots v_{n+1})$ as $D(\tilde v_1\ldots \tilde v_{n+1})$.

The determinant of a matrix $A$ over $\R$ is the volume of the oriented
parallelepiped spanned by the columns of A. It turns out that the function $D$ also computes the volume of some figure  (see \cite{Kh3}). This property of $D$ allows to fit it into the topological version of Parshin reciprocity laws (see \cite{Kh5} and the section 8.3).

\subsection{ Parshin symbols of monomials}

Consider $n+1$ monomials $c_1\mathbf z^{\mathbf k_1},\dots, c_{n+1}\mathbf z^{\mathbf k_{n+1}}$  with nonzero coefficients $c_i\in \Bbb C^*$ in $n$ complex variables $\mathbf z =(z_1,\dots, z_n)$, $\mathbf k_i\in(\Bbb Z)^n, \mathbf k_i=(k_{i,1},\dots,k_{i,n})$, $c_i\mathbf z^{\mathbf k_i}=c_i z_1^ {k_{i,1}}\cdot\dots \cdot z_n^ {k_{i,n}}$. The {\it Parshin Symbol} $[c_1\mathbf z^{\mathbf k_1},\dots, c_{n+1}\mathbf z^{\mathbf k_{n+1}}]$ of the sequence $c_1\mathbf z^{\mathbf k_1}, \dots, c_{n+1}\mathbf z^{\mathbf k_{n+1}}$ is equal by definition to
$$
(-1)^{D(\mathbf k_1,\dots,\mathbf k_{n+1})}c_1^{-\det(\mathbf
k_2,\dots, \mathbf k_{n+1})} \dots c_{n+1}^{(-1)^{n+1}\det(\mathbf
k_1,\dots, \mathbf k_{n})}=
$$
$$
 = (-1)^{D(\mathbf k_1,\dots,\mathbf
k_{n+1})}\exp \left(-\det \left(\begin{array}{cccc}\ln c_1&k_{1,1}&\dots&k_{1,n+1}\\
\vdots&\vdots&&\vdots\\
\ln c_{n+1}&k_{n+1,1}&\dots&k_{n+1,n+1}\end{array}\right)\right),
$$
where $D:(\Bbb Z^n)^{n+1} \rightarrow \Bbb Z/2\Bbb Z$ is the function defined in the previous section.
\begin{ex} The Parshin symbol $[c_1z^{k_1}, c_2z^{k_2}]$ of
functions  $c_1z^{k_1}, c_2z^{k_2}$ in one variable $z$
is equal to $(-1)^{k_1k_2}c_1^{-k_2}c_1^{k_1}$,
thus it is equal to the Weil symbol $\{c_1z^{k_1}, c_2z^{k_2}\}_0$ of these functions at
the origin $z=0$.
\end{ex}
By definition, the Parshin symbol is skew-symmetric, so for example,
$$[c_1\mathbf z^{\mathbf k_1}, c_2\mathbf z^{\mathbf k_2}, \dots,
c_{n+1}\mathbf z^{\mathbf k_{n+1}}] = [c_2\mathbf z^{\mathbf k_2},
c_1\mathbf z^{\mathbf k_1}, \dots, c_{n+1}\mathbf z^{\mathbf
k_{n+1}}]^{-1},$$
and multiplicative, so for example, if
$c_1\mathbf z^{\mathbf k_1}=a_1 b_1\mathbf z^{\mathbf l_1+\mathbf m_1}$,
then
$$
[c_1\mathbf z^{\mathbf k_1},\dots, c_{n+1}\mathbf z^{\mathbf k_{n+1}}] =[a_1\mathbf z^{\mathbf
l_1}, \dots, c_{n+1}\mathbf z^{\mathbf l_{n+1}}][b_1\mathbf z^{\mathbf
m_1},\dots, c_{n+1}\mathbf z^{\mathbf m_{n+1}}].
$$

\subsection{Topological version of Parshin laws.}

 The Parshin  reciprocity laws (see \cite{Par}, \cite{FPar}) are applicable  to $(n+1)$ rational functions on an $n$-dimensional  algebraic variety over an algebraically closed field of any characteristic. They contain several general relations between the Parshin symbols of these $(n+1)$ functions analogous to the relation between the Weil symbols of two functions given in the Weil reciprocity law for an algebraic curve. We will need a topological version of Parshin's laws over $\Bbb C$ in the special situation that the algebraic variety is $(\C^*)^n$ and the $(n+1)$ functions are Laurent polynomials $f_1,\dots,f_{n+1}$. Let us state needed facts for that special situation (for general case see \cite{Kh5}).

Let $\Gamma$ be a hypersurface in $(\C^*)^n$ defined by the equation  $P= f_1\ldots f_n=0$.
According to the topological version of the Parshin reciprocity laws there is a  map $[f_1,\dots,f_{n+1}] : H^n((\C^*)^n\setminus \Gamma,\Z)\rightarrow \C^*$ having the following properties:

1)  The map $ [f_1,\dots,f_n]$ depends skew symmetrically on the components $f_i$, so for example $ [f_1,f_2,\dots,f_{n+1}]= [f_2,f_1,\dots,f_{n+1}]^{-1}$.

2) Let $A=A_1+\ldots+A_{n+1}$ be a vertex of $\Delta_1+\dots+\Delta_{n+1}$
where $A_i$ is a vertex in $\Delta_i$. Let $c_i\mathbf z^{\mathbf k_i}$ be  the  monomial with the coefficient $c_i$ in $f_i$ corresponding to  $A_i\in \Delta_i$. Then $[f_1,\ldots,f_{n+1}](T^n_A)=[c_1\mathbf z^{\mathbf k_1},\ldots, c_{n+1}\mathbf z^{\mathbf k_{n+1}}]$ where $T^n_A$ is the cycle corresponding to the vertex $A$.

3) Let $z$ be a root of multiplicity $\mu(z)$ of the system $f_1=\ldots=\hat f_i=\dots=f_{n+1}=0$, where the equation $f_i=0$ has been removed.  Let $\gamma_z$ be the corresponding Grothendieck cycle. Assume that $f_i(z)\neq 0$.  Then $[f_1,f_2,\ldots,f_{n+1}](\gamma_z)=f_i(z)^{(-1)^i\mu(z)}$.

\section {Product over roots of a system of equations}

Let $\Delta=(\Delta_1,\ldots,\Delta_{n+1})$ be $(n+1)$ Newton polyhedra in the lattice  $\Z^n$ and let $\Omega_{\Delta}$ be the  space  of $(n+1)$-tuples of Laurent polynomials $(f_1,\ldots f_{n+1})$ such that  the Newton polyhedron of $f_i$ is contained in $\Delta_i$. We will define a rational function $\Pi^{[i]}_{\Delta}$ on the space $\Omega_{\Delta}$ which we will call {\it the product of $f_i$ over the common zeros  of  $f_j$ for $j\neq i$}. Let $U^i_{\Delta}\subset \Omega_{\Delta}$ be the Zariski open set defined by the following condition: $(f_1,\ldots,f_{n+1})\in U^i_{\Delta}$ if and only if the set $Y^i_{\Delta}\subset (\C^*)^n$ of  common zeros  of  $f_j$ for $j\neq i$ is finite and the number of points in $Y^i_{\Delta}$ (counting with multiplicities) is equal to $n!Vol (\Delta_1,\ldots, \hat \Delta_i, \ldots,\Delta_{n+1})$ (the polyhedron $\Delta_i$ is omitted in this mixed volume).

\begin{Definition}  We  define the function $\Pi_{\Delta}^{[i]}$ on $U^i_{\Delta}$ as follows. If the set $Y^i_{\Delta}\subset (\C^*)^n$ is empty (i.e. if $n!Vol (\Delta_1,\ldots , \hat \Delta_i, \ldots,\Delta_{n+1})= 0$) then $\Pi^{[i]}_{\Delta} \equiv 1$. Otherwise

$$
\Pi_{\Delta}^{[i]}(f_1,\ldots,f_{n+1})=\prod_{x\in Y^i_{\Delta}} f_i^{m_x}(x),
$$
where $m_x$ is the multiplicity of the common zero $x\in Y^i_{\Delta}$ of the Laurent polynomials $f_j$ for $j\neq i$.

\end{Definition}

\begin{Lemma}
The function $\Pi^{[i]}_{\Delta}$ is regular on $U^{i}_{\Delta}$. It can be extended to a rational function on  $\Omega_{\Delta}$.
\end{Lemma}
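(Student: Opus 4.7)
The plan is to separate the statement into two claims---regularity of $\Pi^{[i]}_\Delta$ on $U^i_\Delta$, and existence of a rational extension to $\Omega_\Delta$---and to observe that the second is essentially automatic. Indeed, $\Omega_\Delta$ is the affine space of coefficient tuples $\prod_j \C^{\Delta_j\cap\Z^n}$, hence an irreducible variety, and $U^i_\Delta$ is a non-empty Zariski open subset by hypothesis; any regular function on $U^i_\Delta$ therefore gives, tautologically, a rational function on $\Omega_\Delta$. So all of the work is in proving regularity on $U^i_\Delta$.

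For that, I would identify $\Pi^{[i]}_\Delta$ with the norm of $f_i$ along the universal family of roots. Consider the incidence variety
$$
Z \;=\; \bigl\{(f,x)\in \Omega_\Delta\times (\C^*)^n \;:\; f_j(x)=0 \text{ for all } j\neq i\bigr\},
$$
and its projection $\pi\colon Z\to \Omega_\Delta$. Restricting to $W=\pi^{-1}(U^i_\Delta)$, the map $\pi|_W\colon W\to U^i_\Delta$ is finite of constant degree $d=n!\,\mathrm{Vol}(\Delta_1,\ldots,\hat\Delta_i,\ldots,\Delta_{n+1})$ by the very definition of $U^i_\Delta$. Since $W$ is cut out by $n$ equations in the smooth $(\dim\Omega_\Delta+n)$-dimensional ambient $\Omega_\Delta\times (\C^*)^n$ and has the expected dimension $\dim\Omega_\Delta$ over $U^i_\Delta$, it is locally a complete intersection there, hence Cohen--Macaulay. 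Miracle flatness then forces $\pi|_W$ to be flat, and consequently $\pi_*\mathcal{O}_W$ is a locally free sheaf of rank $d$ on $U^i_\Delta$.

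The Laurent polynomial $f_i$ pulls back to a regular function on $Z$, and multiplication by $f_i$ defines an $\mathcal{O}_{U^i_\Delta}$-linear endomorphism of the rank-$d$ bundle $\pi_*\mathcal{O}_W$. Its determinant---the norm of $f_i$ along $\pi|_W$---is a regular function on $U^i_\Delta$ whose value at a point $f=(f_1,\ldots,f_{n+1})\in U^i_\Delta$ is precisely
$$
\prod_{x\in Y^i_\Delta} f_i(x)^{m_x} \;=\; \Pi^{[i]}_\Delta(f),
$$
which gives regularity on $U^i_\Delta$ and, combined with the first paragraph, the full lemma.

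The main technical point is the flatness of $\pi|_W$: this is exactly what converts the \emph{pointwise} assignment $f\mapsto \prod f_i(x)^{m_x}$ into the determinant of a family of endomorphisms of a vector bundle, and thereby upgrades set-theoretic well-definedness to algebraic regularity in the coefficients. The hypothesis built into $U^i_\Delta$---that the total multiplicity of the fiber is exactly the mixed-volume number $d$---is precisely what makes miracle flatness applicable, since it rules out jumps in fiber length that would otherwise obstruct flatness.
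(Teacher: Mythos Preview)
Your norm-along-the-family argument is correct and is the natural algebro-geometric route, but one step is asserted too quickly: the definition of $U^i_\Delta$ only says each fiber of $\pi|_W$ has length $d$, not that $\pi|_W$ is proper (a priori roots could drift toward the boundary of $(\C^*)^n$ along a family in $U^i_\Delta$), and without properness $\pi_*\mathcal O_W$ need not be coherent. This is easy to repair once you have flatness: over the henselization at any base point the quasi-finite $W$ splits as a finite part disjoint from a part with empty closed fiber, and comparing fiber lengths shows the latter is empty, so $\pi|_W$ is finite. The paper's own proof is quite different and more elementary: it restricts to the dense open $\tilde U^i_\Delta\subset U^i_\Delta$ where all common zeros are simple (there the roots vary holomorphically by the implicit function theorem and $\Pi^{[i]}_\Delta$ is a symmetric expression in them, hence regular), and then extends across the discriminant locus by the Riemann removable singularity theorem, the product formula being visibly locally bounded as roots collide. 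Your approach trades that analytic extension step for commutative algebra (Cohen--Macaulayness, miracle flatness, the norm map); it is longer but base-field independent, whereas the paper's argument is a two-line appeal to complex analysis.
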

\begin{proof} Let $\tilde U^i_{\Delta}\subset U^i_{\Delta} $ be the Zariski open set in which common zeros of $f_j$ with $j\neq i$ have multiplicity one. The function $\Pi^{[i]}_{\Delta}$ is obviously regular on $\tilde U^i_{\Delta}$. By the removable singularity theorem it is regular in  $U^i_{\Delta} $.  By definition $\Pi^{[i]}_{\Delta}$ is an algebraic  single-valued function on $U^{i}_{\Delta}$.  Thus, it is rational function on $\Omega_{\Delta}$.
\end{proof}

Note that even if $(f_1,\ldots,f_{n+1})\notin U^i_{\Delta}$ then the product of $f_i$ over the common roots of $f_j$ for $j\neq i$ is well defined if the set of common roots is finite. But this product is not  necessarily equal to $\Pi^{[i]}_{\Delta}(f_1,\ldots,f_{n+1})$.

 Assume that the collection  $\Delta=(\Delta_1,\ldots,\Delta_{n+1})$ is $i$-de\-ve\-loped. Consider the space $\Omega_\Delta$ of $(n+1)$-tuples $(f_1,\dots,f_{n+1})$ of Laurent polynomials whose Newton polyhedra are contained correspondingly in  $(\Delta_1,\ldots,\Delta_{n+1})$. Denote by $\Omega_\Delta^{i}$ the open subset in $\Omega_\Delta$ defined by the condition that the Newton polyhedron of $f_j$ is $\Delta_j$ for $j\neq i$.

\begin{Theorem}
 The function
$\Pi_\Delta^{[i]}$ is regular  on $\Omega_\Delta^{i}$. Moreover, there exists a monomial $M_i$ in vertex coefficients of all the $f_j$ for $j\neq i$ such that the product $M_i\Pi_\Delta^{[i]}$ is a polynomial.
\end{Theorem}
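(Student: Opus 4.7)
\emph{Regularity on $\Omega_\Delta^{i}$.} My first step is to verify the inclusion $\Omega_\Delta^{i}\subseteq U^{i}_{\Delta}$. A tuple $(f_1,\dots,f_{n+1})\in\Omega_\Delta^{i}$ satisfies $\Delta(f_j)=\Delta_j$ for every $j\neq i$ by definition. Because $(\Delta_1,\dots,\hat\Delta_i,\dots,\Delta_{n+1})$ is developed, the system $f_1=\dots=\hat f_i=\dots=f_{n+1}=0$ then has exactly $n!\operatorname{Vol}(\Delta_1,\dots,\hat\Delta_i,\dots,\Delta_{n+1})$ isolated roots in $(\C^{*})^n$ counted with multiplicities --- this is the defining feature of developed systems recalled in the introduction: the Bernstein--Koushnirenko bound is attained for \emph{every} admissible choice of coefficients, not merely for generic ones. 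Hence $(f_1,\dots,f_{n+1})\in U^{i}_{\Delta}$, and the previous lemma yields that $\Pi_\Delta^{[i]}$ is regular on $\Omega_\Delta^{i}$.

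\emph{Clearing the denominator.} The previous lemma also extends $\Pi_\Delta^{[i]}$ to a rational function on the affine space $\Omega_\Delta$. The key observation is that the complement $\Omega_\Delta\setminus\Omega_\Delta^{i}$ is precisely the locus where $\Delta(f_j)\subsetneq\Delta_j$ for at least one $j\neq i$, and this locus coincides with the union of the coordinate hyperplanes $\{c_v^{(j)}=0\}$, where $v$ runs over the vertices of $\Delta_j$ and $j$ over $\{1,\dots,n+1\}\setminus\{i\}$; here $c_v^{(j)}$ denotes the coefficient of the vertex monomial $\mathbf z^{v}$ in $f_j$. Since $\Pi_\Delta^{[i]}$ is regular off this union, its polar divisor is supported there. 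Writing $\Pi_\Delta^{[i]}$ as a quotient of polynomials in lowest terms inside the polynomial ring generated by the coefficients of $f_1,\dots,f_{n+1}$, its denominator therefore divides a monomial in the vertex coefficients $c_v^{(j)}$ with $j\neq i$; taking $M_i$ to be this monomial, the product $M_i\Pi_\Delta^{[i]}$ is regular on the affine space $\Omega_\Delta$, hence a polynomial.

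\emph{Where the difficulty lies.} The substantive input is the first step --- the sharp Bernstein--Koushnirenko count for arbitrary (non-generic) developed systems with prescribed Newton polyhedra, which prevents roots from escaping to the toric boundary or coalescing in a way that decreases the total multiplicity. Granting this fact, the rest is a codimension-one argument on affine space and is essentially automatic. An explicit form of $M_i$ can moreover be extracted from the elimination-theoretic procedure of Corollary~8 together with the product formula for developed systems from \cite{Kh2}: expanding $f_i$ as a sum of its monomials reduces the task to computing, for each character $\mathbf z^{\alpha}$, the product $\prod_{x\in Y^{i}_{\Delta}}x^{\alpha}$, and for a developed system this product is a Laurent monomial in the vertex coefficients of the $f_j$ with $j\neq i$.
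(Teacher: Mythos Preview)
Your proof is correct and follows the same strategy as the paper: establish $\Omega_\Delta^{i}\subseteq U^{i}_{\Delta}$ via the sharp Bernstein--Koushnirenko count for developed systems, then clear denominators using that the complement is cut out by the vertex-coefficient hyperplanes. The paper's proof compresses the second step into a single sentence (``Thus there exists a monomial $M_i$\dots''), whereas you spell out why the polar divisor of a rational function regular off a union of coordinate hyperplanes must be supported there; your version is simply a more detailed rendition of the same argument.
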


\begin{proof}
Since the system $f_1=  \ldots=\hat f_i=\dots =f_{n+1}=0$ is developed, the number of roots counting with multiplicities is constant on $\Omega_\Delta^{i}$ and $\Omega_\Delta^{i}\subset U^{i}_{\Delta}$. Therefore the function $\Pi_\Delta^{[i]}$ is regular on $\Omega_\Delta^{i}$. Thus  there exists a monomial $M_i$ in the vertex coefficients of $f_j$ for $j\neq i$ such that the product $M_i\Pi_\Delta^{[i]}$ is a polynomial on $\Omega_\Delta$.
\end{proof}

In Section 13.4 we will present an algorithm for computing the function $\Pi_\Delta^{[i]}$ for $i$-developed systems.

\section{Identity for $i$- and $j$- developed system.}

\begin{Theorem}
 Assume that  $\Delta=(\Delta_1,\dots,\Delta_{n+1})$ is $i$-developed and $j$-developed for some $i\neq j$. Assume also that $(f_1,\ldots,f_{n+1})\in \Omega_\Delta $ satisfies the assumptions of Theorem 9. Then
$$
\Pi_\Delta^{[i]}(f_1,\ldots,f_{n+1}) (\Pi_\Delta^{[j]}(f_1,\ldots,f_{n+1}))^{-1}=\prod [f_1,\ldots,f_{n+1}]_A^{(-1)^{n+i+j}k^{i,j}_A},
$$
where the product on the right is taken over the vertices $A$ of $\sum\Delta_i$.
\end{Theorem}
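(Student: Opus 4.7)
The natural approach is to apply the topological Parshin pairing $[f_1,\ldots,f_{n+1}]\colon H_n((\C^*)^n \setminus \Gamma, \Z) \to \C^*$ from Section~8.3 to the homological identity supplied by Theorem~9. The hypotheses of Theorem~10, that $\Delta$ is $i$- and $j$-developed and that the system $f_1 = \ldots = f_{n+1} = 0$ has no solution in $(\C^*)^n$, are exactly those of Theorem~9, so in $H_n((\C^*)^n \setminus \Gamma, \Z)$ one has
\[
(-1)^{j-2}\sum_{x\in X_i}\gamma_x + (-1)^{i-1}\sum_{y\in X_j}\gamma_y = (-1)^n \sum_{A} k_A^{i,j} T^n_A .
\]
Since $[f_1,\ldots,f_{n+1}]$ is a homomorphism from the additive group $H_n$ into $\C^*$, integer linear combinations of cycles on the left translate into products with exponents on the right under evaluation.

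The left hand side is computed using property~(3) of the Parshin pairing. For $x \in X_i$ one has $[f_1,\ldots,f_{n+1}](\gamma_x) = f_i(x)^{(-1)^i \mu(x)}$, since $x$ is a root of the system with $f_i$ deleted and $f_i(x)\neq 0$ by the inconsistency hypothesis on the full system; analogously $[f_1,\ldots,f_{n+1}](\gamma_y) = f_j(y)^{(-1)^j \mu(y)}$ for $y \in X_j$. Using $(-1)^{j-2} = (-1)^j$ and $(-1)^{i-1} = -(-1)^i$, the left hand side evaluates to
\[
\prod_{x \in X_i} f_i(x)^{(-1)^{i+j}\mu(x)} \cdot \prod_{y \in X_j} f_j(y)^{-(-1)^{i+j}\mu(y)} = \bigl(\Pi_\Delta^{[i]}\bigr)^{(-1)^{i+j}} \bigl(\Pi_\Delta^{[j]}\bigr)^{-(-1)^{i+j}} .
\]
Meanwhile, property~(2) identifies $[f_1,\ldots,f_{n+1}](T^n_A)$ with the vertex Parshin symbol $[f_1,\ldots,f_{n+1}]_A$ of the corresponding monomials, so the right hand side evaluates to $\prod_A [f_1,\ldots,f_{n+1}]_A^{(-1)^n k_A^{i,j}}$.

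Equating the two evaluations and raising both sides to the power $(-1)^{i+j}$ (an involution on the multiplicative group $\C^*$) gives
\[
\Pi_\Delta^{[i]} \bigl(\Pi_\Delta^{[j]}\bigr)^{-1} = \prod_A [f_1,\ldots,f_{n+1}]_A^{(-1)^{n+i+j} k_A^{i,j}},
\]
which is the statement. The argument is essentially mechanical once Theorem~9 and the topological Parshin laws are in hand, so the only step requiring genuine care is sign bookkeeping: three independent families of signs appear --- the permutation signs $(-1)^{j-2}$ and $(-1)^{i-1}$ from reordering the equations in Theorem~9, the signs $(-1)^i$ and $(-1)^j$ from property~(3) of the Parshin pairing, and the overall $(-1)^n$ prefactor in Theorem~9 --- and one must verify that they consolidate precisely into the single exponent $(-1)^{n+i+j}$ appearing on the right of the identity.
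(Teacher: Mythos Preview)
Your proof is correct and follows essentially the same route as the paper: apply the Parshin map $[f_1,\ldots,f_{n+1}]$ to the homological identity of Theorem~9, evaluate Grothendieck cycles via property~(3) and vertex cycles via property~(2), then raise both sides to the power $(-1)^{i+j}$. Your sign bookkeeping is in fact more explicit than the paper's own presentation.
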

\begin{proof}
Let us apply the element $[f_1,\dots,f_{n+1}]\in H^n((\C^*)^n\setminus \Gamma,\C^*)$ to the identity from Theorem 9. According to the section 9.3 we  have

$$ \Pi_\Delta^{[i]}(f_1,\ldots,f_{n+1})^{(-1)^i (-1)^{j-2}} \Pi_\Delta^{[j]}(f_1,\ldots,f_{n+1}))^{(-1)^j(-1)^{i-1}} =$$
$$=\prod [f_1,\ldots,f_{n+1}]_A^{(-1)^nk^{i,j}_A}.$$

To complete the proof it is enough to raise each side of this identity to the power $(-1)^{i+j}$.
\end{proof}

Theorem 12 contains the formula from \cite{Kh2} for the product in $(\C^*)^n$ of all the roots of a developed system of $n$ equations. To find such a product it is enough to compute the product over all roots of any monomial $\mathbf z^{\mathbf m}$: taking the coordinate functions $z_1,\dots,z_n$ as such monomials one obtains all coordinates of the product of all roots. Assume that $\Delta=(\Delta_1,\dots,\Delta_{n+1})$ is, say, 1-developed and that $\Delta_1=\{m\}$ is a single point. Consider an $(n+1)$ tuple of Laurent polynomials $f_1,\dots,f_{n+1}$ with Newton polyhedra $\Delta_1,\dots, \Delta_{n+1}$.

Then: 1) $f_1$ is the monomial $\mathbf z^{\mathbf m}$ with a nonzero coefficient $c$, i.e. $f_1=c\mathbf z^{\mathbf m}$; 2) $\Delta$ is $j$-developed for any $1< j\leq (n+1)$ because the collection $\Delta$ with $\Delta_j$ skipped contains the point $\Delta_1$.

Let us apply Theorem 12 to the case under consideration with $i=1$, $j=2$. We have: a) $\Pi^{[2]}_\Delta=1$ because the system $cx^m=f_3=\dots=f_{n+1}=0$ has no roots in $(\C^*)^n$;
b) $\Pi^{[1]}_\Delta$ is equal to the product of $c\mathbf z^{\mathbf m}$ over all roots of the system $f_2=\dots=f_{n+1}=0$, i.e. is equal to $c^{n!Vol(\Delta_2,\dots,\Delta_{n+1})}$ multiplied by  the product of $\mathbf z^{\mathbf m}$ over all roots of the system.

\begin{Corollary}
With the assumptions of Theorem 12 for $i=1$, $j=2$ and $f_1=c \mathbf z^{\mathbf m}$ the product of $\mathbf z^{\mathbf m}$ over the roots of the system $f_2=\dots=f_{n+1}=0$ multiplied by $c^{n!Vol(\Delta_2,\dots,\Delta_{n+1})}$ is equal to $\prod [f_1,\ldots,f_{n+1}]_A^{(-1)^{n+1}k^{1,2}_A}.$
\end{Corollary}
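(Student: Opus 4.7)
The plan is to obtain the corollary as a direct specialization of Theorem 12 to $(i,j)=(1,2)$, combined with the two explicit product computations outlined in the paragraph preceding the corollary statement. The first step is to verify the hypotheses. The collection $\Delta$ is $1$-developed by assumption; for any $j>1$ the subcollection obtained by removing $\Delta_j$ still contains $\Delta_1=\{\mathbf{m}\}$, so any face of the corresponding Minkowski sum has its $\Delta_1$-summand forced to be the single vertex $\{\mathbf{m}\}$. This shows $\Delta$ is $j$-developed for every $j\neq 1$, so in particular the collection is both $1$- and $2$-developed, and the system $f_1=\dots=f_{n+1}=0$ is trivially inconsistent in $(\C^*)^n$ because $f_1$ is a nowhere vanishing monomial. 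The hypotheses of Theorem 12 (and of Theorem 9 on which it depends) are therefore satisfied.

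Next I would carry out the two product computations. For $\Pi^{[2]}_\Delta$: the system defining $Y^2_\Delta$ reads $c\mathbf{z}^{\mathbf{m}}=f_3=\dots=f_{n+1}=0$, which has empty solution set in $(\C^*)^n$. Simultaneously, $n!Vol(\Delta_1,\Delta_3,\dots,\Delta_{n+1})=0$ because $\Delta_1$ is a point. Hence the open set $U^2_\Delta$ contains $(f_1,\dots,f_{n+1})$ vacuously, and Definition 1 immediately yields $\Pi^{[2]}_\Delta\equiv 1$. For $\Pi^{[1]}_\Delta$: the reduced system $f_2=\dots=f_{n+1}=0$ is developed, so Bernstein--Koushnirenko gives exactly $n!Vol(\Delta_2,\dots,\Delta_{n+1})$ roots counted with multiplicity, placing $(f_1,\dots,f_{n+1})$ in $U^1_\Delta$. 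Substituting $f_1=c\mathbf{z}^{\mathbf{m}}$ factors $\Pi^{[1]}_\Delta$ as $c^{n!Vol(\Delta_2,\dots,\Delta_{n+1})}$ times the product of $\mathbf{z}^{\mathbf{m}}$ over the roots of the reduced system with multiplicities.

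Finally, Theorem 12 for $(i,j)=(1,2)$ gives $\Pi^{[1]}_\Delta/\Pi^{[2]}_\Delta=\prod_A[f_1,\dots,f_{n+1}]_A^{(-1)^{n+1}k^{1,2}_A}$, where the exponent sign is computed as $(-1)^{n+1+2}=(-1)^{n+1}$. Plugging in $\Pi^{[2]}_\Delta=1$ together with the factored form of $\Pi^{[1]}_\Delta$ collapses the identity to precisely the claimed formula. I do not expect a serious obstacle here: the entire argument is routine specialization of Theorem 12, and the only step deserving care is verifying that $\Pi^{[2]}_\Delta\equiv 1$ as a value of the rational function from Definition 1 rather than as an ad hoc empty product, which is where the vanishing of the mixed volume $n!Vol(\Delta_1,\Delta_3,\dots,\Delta_{n+1})$ plays its essential role.
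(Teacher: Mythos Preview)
Your argument is correct and follows essentially the same route as the paper: the paragraph immediately preceding the corollary already records the two key facts $\Pi^{[2]}_\Delta=1$ and $\Pi^{[1]}_\Delta=c^{n!Vol(\Delta_2,\dots,\Delta_{n+1})}\cdot\prod\mathbf z^{\mathbf m}$, and then Theorem~12 with $(i,j)=(1,2)$ gives the result. Your version is a bit more careful in tracing the equality $\Pi^{[2]}_\Delta=1$ back to the vanishing mixed volume clause of Definition~1 rather than just to the emptiness of the root set, but this is only a refinement of the same argument.
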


\begin{Corollary}
If $\Delta=(\Delta_1,\dots,\Delta_{n+1})$ is $i$-developed and $j$-developed then on $\Omega_\Delta$  the relation
\begin{equation}\label {i,j}
\Pi_\Delta^{[i]}/\Pi_\Delta^{[j]}=M_{i,j}s_{i,j}
\end{equation}
holds, where $M_{i,j}$ is an explicit monomial in the vertex coefficients of all $f_k$ and $s_{i,j}=\prod_A(-1)^ {D(A_1,\ldots,A_{n+1})k^{i,j}_A}$, where $A_1,\dots,A_{n+1}$ are vertices of $\Delta_1,\dots,\Delta_{n+1}$ such that $A_1+\dots+A_{n+1}=A$.
\end{Corollary}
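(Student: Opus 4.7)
The plan is to unpack the identity of Theorem 12 vertex by vertex using the explicit formula for the Parshin symbol. By property (2) of the topological Parshin pairing recalled in Section 9.3, at each vertex $A = A_1+\dots+A_{n+1}$ of $\sum\Delta_k$ (with $A_k$ a vertex of $\Delta_k$) we have
$$
[f_1,\dots,f_{n+1}]_A \;=\; [c_1\mathbf z^{\mathbf k_1},\dots,c_{n+1}\mathbf z^{\mathbf k_{n+1}}],
$$
where $c_k\mathbf z^{\mathbf k_k}$ is the vertex monomial of $f_k$ corresponding to $A_k$. The explicit formula from Section 9.2 factors this value as a sign $(-1)^{D(A_1,\dots,A_{n+1})}$, depending only on the mod $2$ reductions of the exponents, multiplied by a monomial
$$
\mu_A \;=\; \prod_{k=1}^{n+1} c_k^{(-1)^k\det(\mathbf k_1,\dots,\hat{\mathbf k}_k,\dots,\mathbf k_{n+1})}
$$
in the vertex coefficients $c_1,\dots,c_{n+1}$.

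Substituting this factorization into the right-hand side of Theorem 12 gives
$$
\Pi_\Delta^{[i]}/\Pi_\Delta^{[j]} \;=\; \prod_A \Bigl((-1)^{D(A_1,\dots,A_{n+1})}\,\mu_A\Bigr)^{(-1)^{n+i+j}k^{i,j}_A},
$$
where the product ranges over all vertices $A$ of $\sum\Delta_k$. The crucial observation is that $(-1)^{n+i+j}\in\{\pm 1\}$, so raising a sign to this power leaves it unchanged; hence the sign contribution at $A$ is $(-1)^{D(A_1,\dots,A_{n+1})k^{i,j}_A}$, and its product over $A$ is precisely the quantity $s_{i,j}$ in the statement. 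The remaining factors $\mu_A^{(-1)^{n+i+j}k^{i,j}_A}$ are monomials in the vertex coefficients of $f_1,\dots,f_{n+1}$; their product over $A$ defines the monomial $M_{i,j}$.

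The work is really just algebraic bookkeeping on top of Theorem 12. One should check that the decomposition $A = A_1+\dots+A_{n+1}$ is unique (standard for Minkowski sums of convex polyhedra), so that $\mu_A$ and the $D$-sign are well-defined functions of $A$, and that the exponent $(-1)^{n+i+j}$ interacts correctly with both the sign and monomial parts as described. The resulting $M_{i,j}$ and $s_{i,j}$ are explicit in the sense of the statement, since both $D$ and the determinantal exponents are combinatorial data read off directly from the polyhedra $\Delta_k$.
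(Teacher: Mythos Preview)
Your argument is correct and follows the same route as the paper: both simply observe that each Parshin symbol $[f_1,\dots,f_{n+1}]_A$ is, by its very definition (Section~8.2 together with property~(2) in Section~8.3), the product of the sign $(-1)^{D(A_1,\dots,A_{n+1})}$ and an explicit Laurent monomial in the vertex coefficients, and then read off the sign part and the monomial part of the product in Theorem~12. The paper's proof is a one-line remark to this effect; your version spells out the same computation with the explicit determinantal exponents and the parity observation that absorbing the factor $(-1)^{n+i+j}$ into the sign changes nothing modulo~$2$.
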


\begin{proof} By definition $ [f_1,\ldots,f_{n+1}]_A$ is an explicit monomial in coefficients of $f_1,\ldots,f_{n+1}$ corresponding to the vertices $A_1,\dots,A_{n+1}$ multiplied by $(-1)^{D(A_1,\ldots,A_{n+1})}$.
\end{proof}

\section{Identities for a completely developed system.}

\begin{Definition} A collection $\Delta=(\Delta_1,\dots,\Delta_{n+1})$  of  polyhedra is called {\it completely developed} if it is $i$-developed for all $1 \leq i\leq n+1$.
\end{Definition}

\begin{Theorem} For a completely developed  $\Delta$
there is a $(n+1)$-tuple  $(M_1,\dots, M_{n+1})$ where $M_k$ is a monomial depending on the vertex coefficients of $(f_1,\ldots,f_{n+1})\in \Omega_\Delta$ with $f_k$ removed and a $(n+1)$-tuple  $(s_1,\dots, s_{n+1})$ where $s_i=\pm 1$, such that:
\begin{equation}\label{PMs}
\Pi^{[1]}_\Delta M_1s_1=\ldots= \Pi^{[n+1]}_\Delta M_{n+1}s_{n+1}.
\end{equation}
The $(n+1)$-tuple  $(M_1,\dots,M_{n+1})$ of monials is unique and the $(n+1)$-tuple of signs $(s_1,\dots,s_{n+1})$ is unique up to simultaneous multiplication  by $-1$. Moreover the relation $s_is_j=s_{i,j}$ holds.
\end{Theorem}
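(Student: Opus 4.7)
The plan is to deduce identities (3) from the pairwise identities (2) of Corollary 4 by a cocycle bootstrap, and then to match the resulting monomials with those of Theorem 10.

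First, I establish cocycle conditions. For any three indices $i,j,k$ the tautological factorization
\[
\frac{\Pi^{[i]}_\Delta}{\Pi^{[k]}_\Delta}=\frac{\Pi^{[i]}_\Delta}{\Pi^{[j]}_\Delta}\cdot\frac{\Pi^{[j]}_\Delta}{\Pi^{[k]}_\Delta}
\]
together with Corollary 4 gives $M_{i,k}s_{i,k}=M_{i,j}M_{j,k}s_{i,j}s_{j,k}$. Since a nontrivial Laurent monomial in the coefficients is never a scalar, the decomposition ``Laurent monomial $\times$ sign'' of a rational function is unique, yielding the separate cocycle identities $M_{i,k}=M_{i,j}M_{j,k}$ and $s_{i,k}=s_{i,j}s_{j,k}$. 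In particular $M_{j,i}=M_{i,j}^{-1}$ and $s_{j,i}=s_{i,j}$.

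Next, I produce a tuple satisfying (3). Fix $n+1$ as a base index and set $M'_{n+1}:=1$, $s'_{n+1}:=1$, $M'_i:=M_{n+1,i}$, $s'_i:=s_{n+1,i}$ for $i\leq n$. The cocycles give $M'_j/M'_i=M_{i,n+1}M_{n+1,j}=M_{i,j}$ and $s'_j/s'_i=s_{i,j}$, so Corollary 4 yields the simultaneous identity $\Pi^{[i]}_\Delta M'_is'_i=\Pi^{[j]}_\Delta M'_js'_j$. To upgrade to the correct support, invoke Theorem 10: for each $i$ there is a minimal monomial $M_i$ in vertex coefficients of $f_k$, $k\neq i$, such that $M_i\Pi^{[i]}_\Delta$ is a polynomial. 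Their pairwise ratio
\[
\frac{M_i\Pi^{[i]}_\Delta}{M_j\Pi^{[j]}_\Delta}=\frac{M_i}{M_j}\,M_{i,j}\,s_{i,j}
\]
is a Laurent monomial times a sign. Any vertex coefficient appearing with negative exponent would destroy polynomiality, while one appearing with positive exponent would divide $M_i\Pi^{[i]}_\Delta$ in the vertex coefficients of $f_k$, $k\neq i$, contradicting the minimality of $M_i$. Thus the ratio is $\pm 1$, and choosing signs $s_i\in\{\pm1\}$ to make the polynomials $M_i\Pi^{[i]}_\Delta$ actually equal produces a tuple satisfying (3).

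For the relation $s_is_j=s_{i,j}$, rewrite (3) as $\Pi^{[i]}_\Delta/\Pi^{[j]}_\Delta=(M_j/M_i)(s_j/s_i)$; comparing with (2) via the unique monomial-times-sign factorization from the first step gives $s_j/s_i=s_{i,j}$, hence $s_is_j=s_{i,j}$. For uniqueness, suppose $(\tilde M_i,\tilde s_i)$ is another tuple satisfying (3) with the required support. Then $(M_is_i)/(\tilde M_i\tilde s_i)$ is a single constant $c$ (independent of $i$, taken from the common value of (3) for each tuple), so $M_i/\tilde M_i=c\cdot\tilde s_i/s_i$ is a scalar monomial for every $i$. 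Because $M_i,\tilde M_i$ involve only vertex coefficients of $f_k$ with $k\neq i$, and this must hold for every $i$, the scalar $c$ involves no vertex coefficient at all, forcing $M_i=\tilde M_i$ and $s_i/\tilde s_i=\pm 1$ globally.

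The principal obstacle is the minimality/polynomiality step that forces the Laurent monomial ratio of $M_i\Pi^{[i]}_\Delta$ and $M_j\Pi^{[j]}_\Delta$ to be $\pm 1$: this is where the Parshin-symbol cocycle from (2) meets the Poisson-type polynomiality of Theorem 10. Everything else is formal cocycle bookkeeping using the uniqueness of the monomial-times-sign decomposition.
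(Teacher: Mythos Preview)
Your cocycle bookkeeping, the derivation of $s_is_j=s_{i,j}$, and the uniqueness argument are all fine and close to the paper. The gap is in the ``upgrade'' step where you pass from $(M'_i,s'_i)$ to a tuple with the required support by taking the minimal clearing monomial from Theorem~11. The $M_i$ in the statement are \emph{Laurent} monomials (already in dimension one the paper has $M_1=b_l^{-k}b_m^{n}$), so the minimal honest denominator of $\Pi^{[i]}_\Delta$ is in general a different object, and your claim that the resulting ratio is $\pm1$ is false. Concretely, take $f_1=a_1z+a_2z^2$, $f_2=b_0+b_1z$: then $\Pi^{[1]}=b_0(a_2b_0-a_1b_1)/b_1^{2}$ has minimal denominator $b_1^{2}$, giving $M_1'\Pi^{[1]}=b_0(a_2b_0-a_1b_1)$, while $M_2'\Pi^{[2]}=a_2b_0-a_1b_1$; the ratio is $b_0$, not $\pm1$. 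This exhibits both failures at once: a negative exponent in the monomial ratio does \emph{not} ``destroy polynomiality'' (the factor can be absorbed by the other numerator), and a positive exponent of a vertex coefficient not present in $M_i$ (here $b_0$) does \emph{not} contradict minimality of $M_i$.

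The paper avoids this entirely by a one-line variable decomposition rather than any appeal to minimality or polynomiality: write $M_{1,j}=\prod_k m_{1,j}^{(k)}$ according to which $f_k$'s vertex coefficients are involved, set $m=\prod_{j\neq 1}m_{1,j}^{(j)}$, and put $M_1=m^{-1}$, $M_j=M_{1,j}m^{-1}$, $s_1=1$, $s_j=s_{1,j}$. The $f_j$-factor of $M_j$ is then $m_{1,j}^{(j)}/m_{1,j}^{(j)}=1$, so the support condition holds by construction, and the identities are immediate from Corollary~4. Your tuple $(M'_i,s'_i)$ is literally one division-by-$m$ away from the right answer; the detour through Theorem~11 is where the argument breaks.
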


\begin{proof}
To prove existence we will use the identities (\ref {i,j})  for $j>i=1$. Let us represent each monomial $M_{1,j}$ as a product $\prod_{1\leq k\leq n+1}m_{1,j}^{(k)}$ where $m_{1,j}^{(k)}$ is a monomial depending on the vertex coefficients of $f_k$ only. Denote  $m= \prod_{j\ne 1} m_{1,j}^{(j)}$ and divide each identity (\ref{i,j}) for $j>i=1$ by $m$. We obtain a needed representation with
$
(M_1,\dots, M_{n+1})=(m^{-1}, M_{1,2}m^{-1},\dots,M_{1,n+1}m^{-1}),
$ and
$
(s_1,\dots, s_{k+1})= (1,s_{1,2},\dots, s_{1,n+1}).
$

To show uniqueness assume that $(M_1',\dots,M_{n+1}')$ and $(s_1',\dots,s_{n+1}')$ are  another pair of $(n+1)$-tuples  of monomials and signs  such that:
$$
\Pi_\Delta^{[1]} M_1's_1'=\ldots= \Pi_\Delta^{[n+1]} M_{n+1}'s_{n+1}'.
$$
For any $i$ the ratio $M_i/M'_i$ is a  monomial which does not depend on coefficients of $f_i$ . But since $M_1s_1/M_1's_1' =\ldots= M_{n+1}s_{n+1}/M'_{n+1}s_{n+1}'$,
the ratio $M_i/M_i'$ is equal to 1 and collections of signs are proportional. The relation $s_is_j=s_{i,j}$ follows from (\ref {i,j}).
\end{proof}

\begin{Remark}
Let $G=(\Z/2\Z)^{n+1}/D$ be the factor group of $(\Z/2\Z)^{n+1}$ by the diagonal subgroup $D=\{(1,\ldots,1),(-1,\ldots,-1)\}$. Assigning to a collection of completely developed polyhedra $\Delta_1,\ldots,\Delta_{n+1}$ the collection of signs $(s_1,\dots, s_{n+1})$ defined up to simultaneous multiplication by $-1$  gives a map to $G$. This map is a coordinatewise homomorphism with respect to Minkowski sum, for example the relation
$$
\phi(\Delta_1+\Delta_1',,\ldots,\Delta_{n+1})= \phi(\Delta_1,\ldots,\Delta_{n+1})\phi(\Delta_1',\ldots,\Delta_{n+1}),
$$
holds for any completely developed collections $(\Delta_1,\Delta_2, \ldots,\Delta_{n+1})$ and $(\Delta_1',\Delta_2,\ldots,\Delta_{n+1})$.

The multihomomorphism $\phi$ is closely related to the resultants. Take any collection of Laurent polynomials $f=(f_1,\ldots, f_{n+1})$ with completely developed collection of Newton polyhedra  $\Delta=(\Delta_1,\ldots,\Delta_{n+1})$ such that all vertex coefficients of $f_i$'s are equal to 1. Then the values of the $n+1$ product resultants on the collection $f$ would coincide up to a sign, and so $\phi(\Delta)=(R^{[1]}_{\Pi\Delta},\ldots,R^{[n+1]}_{\Pi\Delta})$ up to multiplication by  common factor.

In a contrast to the $\Delta$-resultants, the map $\phi$ in general is not translation invariant (although, it is invariant under simultaneous translation of $\Delta_i$'s) and is not symmetric.
\end{Remark}

Theorems 12 and 13 allow  to describe the monomials $M_1,\dots,M_{n+1}$ in terms of Parshin symbols. Now we will describe these monomials in terms  of Newton polyhedra. Let us introduce some notation.

For an $i$-developed collection $\Delta_1,\dots, \Delta_{n+1}$ denote by $\tilde \Delta_i$  the sum $\Delta_1+\dots+\hat\Delta_i+\dots+\Delta_{n+1}$ where $\Delta_i$ is removed.  For each facet $\Gamma\subset\tilde \Delta_i$ denote by $v_\Gamma$ an irreducible integral covector such that the inner product with $v_\Gamma$ attains its maximum value on $\tilde \Delta_i$ at the facet $\Gamma$. With $v_\Gamma$ one associates the value  $H_{\Delta_i}(v_\Gamma)$ of the support function of $\Delta_i$ on $v_\Gamma$, the faces $\Delta_j^{v_\Gamma}$ of $\Delta_j$ at which the inner product with $v_\Gamma$ attains the maximal value. The facet $\Gamma$ is {\it essential} if among the faces $\Delta_j^{v_\Gamma}$ with $j\neq i$  exactly one face $\Delta_{j(v_{\Gamma})}^{v_\Gamma}$ is a vertex. With an essential facet $\Gamma$ one associates a coefficient $a_{j(v_\Gamma)}$ of the Laurent polynomial $f_{j(v_{\Gamma})}$ at the vertex $\Delta_{j(v_{\Gamma})}^{v_\Gamma}$, and the integral mixed volume $V(v_{\Gamma})$ of the collection of polyhedra $\{\Delta_j^{v_\Gamma}\}$ in which the polyhedra $\Delta_i^{v_\Gamma}$ and $\Delta_{j(v_\Gamma)}^{v_\Gamma}$ are removed.

Let $L(\Gamma)$ be a linear subspace parallel to the minimal affine subspace containing $\Gamma$. We define the integral volume on $L(\Gamma)$ as the translation invariant volume normalized by the following condition: for any $v_1,\ldots,v_{n-1}$ the generators of the lattice $L(\Gamma) \cap \Z^n$, the volume the parallelepiped with sides $v_1,\ldots,v_{n-1}$ is equal to 1.

Any polyhedron in the collection $\{\Delta_j^{v_\Gamma}\}$ in which the polyhedra $\Delta_i^{v_\Gamma}$ and $\Delta_{j(v_\Gamma)}^{v_\Gamma}$ are removed could be translated to $L(\Gamma)$. By $V(v_{\Gamma})$ we mean the integral mixed volume of these translations (note that $V(v_{\Gamma})$ could vanish for some $\Gamma$).

\begin{Theorem} For the monomial $M_i$ the following formula holds
$$ M_i=\prod a_{j(v_\Gamma)}^{(n-1)!H_{\Delta_i}(v_\Gamma)V(v_{\Gamma})}$$
where the product is taken over all essential facets $\Gamma$ of $\tilde \Delta_i$.
\end{Theorem}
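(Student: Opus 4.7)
The plan is to compute $M_i$ by identifying it as the monomial that records the polar behavior of $\Pi_\Delta^{[i]}$ along the coordinate divisors $\{a_{j,A_j}=0\}\subset\Omega_\Delta$, where $a_{j,A_j}$ denotes the coefficient of $f_j$ at a vertex $A_j\in\Delta_j$. By Theorem 11 the product $M_i\Pi_\Delta^{[i]}$ extends to a polynomial on $\Omega_\Delta$, so I may write $M_i=\prod a_{j,A_j}^{\nu_{j,A_j}}$ with $\nu_{j,A_j}$ equal to the signed order of pole of $\Pi_\Delta^{[i]}$ along $\{a_{j,A_j}=0\}$. The theorem then reduces to computing each $\nu_{j_0,A_{j_0}}$ and matching it with the contribution from essential facets $\Gamma$ of $\tilde\Delta_i$ for which $A_{j_0}=\Delta_{j_0}^{v_\Gamma}$ and $j(v_\Gamma)=j_0$.

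To compute $\nu_{j_0,A_{j_0}}$, I fix generic values for all other coefficients, set $a_{j_0,A_{j_0}}=t$, and study the limit $t\to 0$ of the common roots of $\{f_k(t)=0:k\neq i\}$ inside a smooth toric compactification $X_\Sigma$ whose fan refines the normal fan of $\tilde\Delta_i$. A root escapes to a boundary divisor $D_v$ only if the $v$-leading form of $f_{j_0}$ is the monomial $t\cdot x^{A_{j_0}}$, which forces $\Delta_{j_0}^v=\{A_{j_0}\}$ and hence $v=v_\Gamma$ for some facet $\Gamma$ of $\tilde\Delta_i$. On $D_{v_\Gamma}$ the escape locus is cut out by the face system $\{f_k^{v_\Gamma}|_{D_{v_\Gamma}}=0\}_{k\neq i}$. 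If $\Gamma$ is non-essential, two of these equations are independent monomials and admit no solutions on the torus, so no roots escape. If $\Gamma$ is essential with $j(v_\Gamma)=j_0$, the $f_{j_0}$ equation becomes trivial at $t=0$, and the remaining $(n-1)$-polynomial subsystem $\{f_k^{v_\Gamma}\}_{k\neq i,j_0}$ is developed on $L(\Gamma)$ (as a consequence of the $i$-developedness of $\Delta$), so by the Bernstein-Koushnirenko formula in the integral normalization it has $(n-1)!\,V(v_\Gamma)$ solutions $y_0\in D_{v_\Gamma}$.

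For each such $y_0$, the escaping roots admit a Puiseux expansion $x(t)=t^{-v_\Gamma/h}y_0+\cdots$, where $h$ is the lattice distance from $A_{j_0}$ to the next face of $\Delta_{j_0}$ in direction $v_\Gamma$; this produces $h$ ramified branches over $y_0$, for a total of $h\cdot(n-1)!\,V(v_\Gamma)$ escaping roots. Along each branch the dominant monomial of $f_i$ in direction $v_\Gamma$ (attaining value $H_{\Delta_i}(v_\Gamma)$) yields $f_i(x(t))\sim c\cdot t^{-H_{\Delta_i}(v_\Gamma)/h}$, so the product over all escaping roots scales as $t^{-(n-1)!\,H_{\Delta_i}(v_\Gamma)\,V(v_\Gamma)}$, with the ramification factor $h$ cancelling between the branch count and the per-branch exponent. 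This identifies the contribution of $\Gamma$ to $\nu_{j_0,A_{j_0}}$ as $(n-1)!\,H_{\Delta_i}(v_\Gamma)\,V(v_\Gamma)$; summing over essential facets $\Gamma$ with $j(v_\Gamma)=j_0$ and $A_{j_0}=\Delta_{j_0}^{v_\Gamma}$, then aggregating over all vertex coefficients $a_{j_0,A_{j_0}}$, gives the stated product formula.

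The main obstacle will be rigorously controlling the Puiseux ramification in the third step, in particular showing that the per-branch exponent $-H_{\Delta_i}(v_\Gamma)/h$ and the branch multiplicity $h$ combine consistently to produce a clean $t^{-H_{\Delta_i}(v_\Gamma)}$ per geometric escape. A clean workaround is to pass to a refinement of $X_\Sigma$ on which each relevant ray embeds as a primitive divisor and the flat family $\{f_k(t)=0\}$ becomes unramified at the escape locus; ordinary power-series analysis then supplies the asymptotics. Finally one must verify that no additional boundary strata of $X_\Sigma$ absorb escaping roots, which follows from dimension counts showing that higher-codimension strata contribute 0 generically.
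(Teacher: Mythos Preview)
Your approach is sound in outline but differs substantially from the paper's. The paper does not compute the pole order of $\Pi_\Delta^{[i]}$ by a one-parameter degeneration. Instead it invokes the already-proved identity $\Pi_\Delta^{[1]}/\Pi_\Delta^{[j]}=\pm M_{1,j}$ (Corollary~4), splits $M_{1,j}=\prod_k m_{1,j}^{(k)}$ according to which $f_k$ each factor depends on, and then computes each $m_{1,j}^{(j)}$ by restricting to the curve $C=\{f_k=0:\,k\neq 1,j\}$ and applying the Weil reciprocity law on its toric closure $\tilde C$. The exponents $(n-1)!\,H_{\Delta_1}(v_\Gamma)\,V(v_\Gamma)$ then fall out of the Weil symbols $\{f_1,f_2\}_q$ at the boundary points $q\in\tilde C\setminus C$, whose count is $(n-1)!\,V(v_\Gamma)$ per facet. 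In short: the paper reduces an $n$-dimensional identity to a one-dimensional one on a curve, whereas you degenerate one vertex coefficient and track escaping roots in a full $n$-dimensional toric compactification. The paper's route reuses the Parshin/Weil machinery built in Sections~8--10 and so avoids any Puiseux analysis; your route is independent of Theorem~12 and is geometrically transparent, at the cost of the ramification bookkeeping you flag.

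Two points to tighten. First, the identification $M_i=\prod a_{j,A_j}^{\nu_{j,A_j}}$ with $\nu_{j,A_j}=-\mathrm{ord}_{a_{j,A_j}=0}\Pi_\Delta^{[i]}$ does not follow from Theorem~11 alone, since that theorem only asserts existence of \emph{some} clearing monomial. You need that $M_i\Pi_\Delta^{[i]}$ is not divisible by any vertex coefficient; this follows from the uniqueness in Theorem~13 together with the observation that $M_{j_0}\Pi_\Delta^{[j_0]}$ (which equals $\pm M_i\Pi_\Delta^{[i]}$) cannot be divisible by a vertex coefficient of $f_{j_0}$, as $M_{j_0}$ is free of such and $\Pi_\Delta^{[j_0]}$ does not vanish identically when that coefficient is set to zero. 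Second, the assertion that the face subsystem $\{f_k^{v_\Gamma}\}_{k\neq i,j_0}$ is developed on $L(\Gamma)$ ``as a consequence of $i$-developedness'' is not correct in general: $i$-developedness only guarantees that \emph{some} $\Delta_k^{v}$ is a vertex for each refined direction $v$, and that index may well be $j_0$. Fortunately you do not need developedness here---since you have fixed all other coefficients generically, the face system is generic and the Bernstein--Kushnirenko count $(n-1)!\,V(v_\Gamma)$ holds automatically.
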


\begin{proof} Let us sketch a proof for $M_1$. In Theorem 13 we represented $M_1$ in the form  $M_1=(\prod_{j\neq 1} m_{1,j}^{(j)})^{-1}$. One can deal with each factor $m_{1,j}^{(j)}$ separately. We will show that $m_{1,2}^{(2)}=\prod_{\Gamma} a_{2(v_\Gamma)}^{d(v_{\Gamma})}$, where $a_{2(v_\Gamma)}$ is the coefficient of $f_2$ at the vertex $\Delta_2^{v_{\Gamma}}$,
$
d(v_\Gamma)=(n-1)!V(v_\Gamma)H_{\Delta_1}(v_\Gamma) ,
$
and the product is taken over all facets $\Gamma$ of $\Delta_{1,2}=\Delta_3+\dots+\Delta_{n+1}$.

Let $C\subset (\C^*)^n$ be the curve defined by the system $f_3=\ldots=f_{n+1}=0$ (we assume that this system is generic enough). The normalization $\tilde C$ of $C$  has a very explicit description: it can be obtained as the closure of $C$ in the toric  comactification $X$ of $(\C^*)^n$ associated with the polyhedron $\Delta_{12}$. In particular, each facet $\Gamma$ of $\Delta_{12}$ corresponds to a codimension 1 orbit $X_\Gamma$ in $X$. The equality $\tilde C \setminus C=\bigcup_\Gamma(\tilde C\cap X_\Gamma)$ holds.  Moreover the number of points in $\tilde C \cap X_\Gamma$ is equal to $(n-1)! V(v_\Gamma)$, (see \cite{Kh} for details).

By (\ref {i,j}) we have $M_{1,2}=\pm\Pi_\Delta^{[1]}/\Pi_\Delta^{[2]}$. By definition  $M_{1,2}= m_{1,2}^{(1)}m_{1,2}^{(2)}F,$ where $F=\prod_{k>2} m_{1,2}^{(k)}$ is independent of $f_1,f_2$.
On the other hand $\Pi_\Delta^{[1]}/\Pi_\Delta^{[2]}$ is equal to the product of $\{f_1,f_2\}_p^{-1}$ over all zeros $p$ of $f_1f_2$ on the curve $C$.
By Weil's theorem this product is equal to
$
\prod_{q\in (\tilde C \setminus C)} \{f_1,f_2\}_q.
$

Explicit calculations show that for any $g\in \tilde C\cap X_\Gamma$ the following identity holds: $\{f_1,f_2\}_p = a_{1(v_\Gamma)}^{H_{\Delta_{2}}(v_\Gamma)} a_{2(v_\Gamma)}^{-H_{\Delta_{1}}(v_\Gamma)} G$, where $G$  is independent of $f_1,f_2$). The number of points in $\tilde C \cap X_F$ is equal to $(n-1)! Vol(\Delta_{3}^v,\ldots,\Delta_{n+1}^v)$. Putting everything together we get the needed identity $m_{1,2}^{(2)}=\prod_{\Gamma} a_{2(v_\Gamma)}^{d(v_{\Gamma})}$.
\end{proof}

\begin{Definition} For completely developed $(n+1)$-tuple $\Delta$ and $1\leq i\leq n+1$ we define the {\it $i$-th product resultant} $R^{[i]}_{\Pi\Delta}$ on $\Omega_\Delta$ as $R^{[i]}_{\Pi\Delta}=\Pi_\Delta^{[i]} M_i$. By (\ref{PMs}) all the product resultants $R^{[i]}_{\Pi\Delta}$  are equal up to sign.
\end{Definition}

\begin{Theorem}
Let $\Delta=(\Delta_1,\dots,\Delta_{n+1})$ be a completely developed collection. Then:

1) each product resultant $R^{[i]}_{\Pi\Delta}$  is a polynomial on $\Omega_\Delta$. The degree of  $R_{\Pi\Delta}^{[i]}$  in the coefficients of $f_j$ is equal to the  number of roots of the generic system $f_1=\ldots=f_{n+1}=0$ with  $f_j$ skipped (i.e is equal to $n!\,Vol(\Delta_1,\ldots, \hat \Delta_j,\ldots,\Delta_{n+1})$).

2) the function $R^{[i]}_{\Pi\Delta}$ is equal to zero at $(f_1,\ldots, f_{n+1})\in \Omega_\Delta^{[i]}$ if  and only if the system $f_1=\ldots =f_{n+1}=0$ has a root in $(\C^*)^n$.
\end{Theorem}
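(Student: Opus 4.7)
I will derive both assertions from the product identities (\ref{PMs}) together with the regularity statement in Theorem~11, using that $\Omega_\Delta$ is an affine space and in particular a normal variety.

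For part~1, Theorem~11 tells us that each $\Pi^{[j]}_\Delta$ is regular on $\Omega^{j}_\Delta$, so the product $R^{[j]}_{\Pi\Delta}=\Pi^{[j]}_\Delta M_j$ is also regular on $\Omega^{j}_\Delta$. The identities (\ref{PMs}) say that $R^{[i]}_{\Pi\Delta}=\pm R^{[j]}_{\Pi\Delta}$ as rational functions on $\Omega_\Delta$ for every $j$. Consequently $R^{[i]}_{\Pi\Delta}$ is regular on the whole union $\bigcup_{j=1}^{n+1}\Omega^{j}_\Delta$. A tuple $f\in\Omega_\Delta$ fails to lie in this union exactly when at least two distinct $f_k$'s each have a vanishing vertex coefficient; the locus of such tuples is a finite union of codimension two linear subspaces of $\Omega_\Delta$. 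Since $\Omega_\Delta$ is an affine space, any rational function on it regular outside a subset of codimension at least two is automatically a polynomial, so $R^{[i]}_{\Pi\Delta}$ is a polynomial.

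For the degree claim, I will use the representative $R^{[j]}_{\Pi\Delta}=\Pi^{[j]}_\Delta M_j$. Since the monomial $M_j$ involves only vertex coefficients of the $f_k$ with $k\neq j$, the degree of $R^{[j]}_{\Pi\Delta}$ in the coefficients of $f_j$ equals that of $\Pi^{[j]}_\Delta$. The latter is the product of the $n!\,Vol(\Delta_1,\dots,\widehat{\Delta}_j,\dots,\Delta_{n+1})$ linear forms $f_j(x)$ indexed by the points $x\in Y^{j}_\Delta$ counted with multiplicity (the count being the Bernstein--Kushnirenko number for the $j$-developed subsystem $f_1=\cdots=\hat f_j=\cdots=f_{n+1}=0$). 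Combined with $R^{[i]}_{\Pi\Delta}=\pm R^{[j]}_{\Pi\Delta}$, this yields the asserted degree of $R^{[i]}_{\Pi\Delta}$ in the coefficients of $f_j$ for every $j$.

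For part~2, restrict attention to $\Omega^{i}_\Delta$. There every vertex coefficient of every $f_k$ with $k\neq i$ is nonzero, so the monomial $M_i$ is nonvanishing on $\Omega^{i}_\Delta$, and $R^{[i]}_{\Pi\Delta}(f)=0$ is equivalent to $\Pi^{[i]}_\Delta(f)=0$. Because $\Omega^{i}_\Delta\subset U^{i}_\Delta$, Definition~1 gives $\Pi^{[i]}_\Delta(f)=\prod_{x\in Y^{i}_\Delta}f_i(x)^{m_x}$, which vanishes precisely when some factor $f_i(x)$ does, i.e.\ when $f_i$ shares a root in $(\C^*)^n$ with the remaining system. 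This is exactly the condition that $f_1=\cdots=f_{n+1}=0$ is solvable in $(\C^*)^n$. The main obstacle is the codimension step in part~1: one must check that a tuple $f$ for which at most one $f_k$ fails to attain Newton polyhedron $\Delta_k$ still lies in some $\Omega^{j}_\Delta$, so that the missing locus genuinely has codimension $\geq 2$; once this is established, everything else follows cleanly from (\ref{PMs}), Theorem~11, and normality of affine space.
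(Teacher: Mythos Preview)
Your argument is correct and follows essentially the same route as the paper. The paper's proof is extremely terse: it observes that $R^{[i]}_{\Pi\Delta}=M_i\Pi^{[i]}_\Delta$ is ``obviously'' a polynomial of the right degree in the coefficients of $f_i$, and then invokes the equalities $R^{[i]}_{\Pi\Delta}=\pm R^{[j]}_{\Pi\Delta}$ to conclude part~1; part~2 is declared ``obvious from the definitions.'' Your codimension-two / normality argument is exactly the justification the paper suppresses when it passes from ``polynomial in each $f_j$'s coefficients separately (via the $j$-th representative)'' to ``polynomial on all of $\Omega_\Delta$,'' and your treatment of part~2 spells out the same obvious step. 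So there is no substantive difference in strategy, only in the level of detail.
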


\begin{proof}
The expression $M_i\Pi_\Delta^{[i]}=R^{[i]}_{\Pi\Delta}$ is obviously  a polynomial of degree   $n!\,Vol(\Delta_1,\ldots, \hat \Delta_j,\ldots,\Delta_{n+1})$ in the coefficients of $f_i$.
Since all product resultants are equal up to sign we have proven 1).

Statement 2) is obvious from the definitions.
\end{proof}

Let $m\in \Delta_i\cap\Z^n$. Denote by $c$ the coefficient in front of $z^m$ in the Laurent polynomial $f_i$ with Newton polyhedron $\Delta_i$.

\begin{Theorem} In the notations from  Theorem 15, the degree of $R^{[j]}_{\Pi\Delta}$ in a specific coefficient $c$ of $f_i$ is equal to $n!\,Vol(\Delta_1,\ldots, \hat \Delta_i,\ldots,\Delta_{n+1})$. Each polynomial $R^{[j]}_{\Pi\Delta}$  contains exactly one monomial of the highest degree in $c$ and the coefficient in front of this monomial is $\pm 1$.
\end{Theorem}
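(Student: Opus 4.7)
My plan is to reduce the claim to an analysis of the coefficient of $c^N$ (where $N=n!\,Vol(\Delta_1,\dots,\hat\Delta_i,\dots,\Delta_{n+1})$) in $R^{[i]}_{\Pi\Delta}$, and then to identify this coefficient explicitly via the signed Poisson formula (\ref{PMs}) applied to the degenerate auxiliary collection obtained from $\Delta$ by shrinking $\Delta_i$ to the single point $\{m\}$.

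First, by (\ref{PMs}) the product resultants $R^{[j]}_{\Pi\Delta}$ all coincide up to an overall sign, so it suffices to handle $R^{[i]}_{\Pi\Delta}=\Pi_\Delta^{[i]}M_i$. The monomial $M_i$ depends only on vertex coefficients of the $f_k$ with $k\neq i$ and hence is independent of $c$. Writing $f_i=cz^m+g$ with $g$ free of $c$, the common zero set $Y^i_\Delta$ of the remaining $f_k$ is also independent of $c$, and each factor in $\Pi_\Delta^{[i]}=\prod_{x\in Y^i_\Delta}f_i(x)^{m_x}$ is linear in $c$. Hence the degree in $c$ is at most $N=\sum_x m_x$, which by Bernstein--Koushnirenko (applicable since $\Delta$ is $i$-developed) equals $n!\,Vol(\Delta_1,\dots,\hat\Delta_i,\dots,\Delta_{n+1})$, and the coefficient of $c^N$ in $\Pi_\Delta^{[i]}$ is the ``product of the monomial $z^m$ over the roots,''
\[
Z\,:=\,\prod_{x\in Y^i_\Delta}x^{m\cdot m_x}.
\]
Thus the coefficient of $c^N$ in $R^{[i]}_{\Pi\Delta}$ equals $M_i\cdot Z$, and it remains to prove that this equals $\pm 1$ times a single monomial in the remaining coefficients.

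Next, introduce the collection $\Delta'=(\Delta_1,\dots,\{m\},\dots,\Delta_{n+1})$ obtained from $\Delta$ by replacing $\Delta_i$ with the one-point polyhedron $\{m\}$. Any subcollection containing a one-point polyhedron is automatically developed (a point is a vertex in every direction), while the subcollection obtained by removing $\{m\}$ equals $(\Delta_1,\dots,\hat\Delta_i,\dots,\Delta_{n+1})$, developed by hypothesis; thus $\Delta'$ is completely developed and Theorem 13 supplies tuples $(M'_1,\dots,M'_{n+1})$ and $(s'_1,\dots,s'_{n+1})$ satisfying (\ref{PMs}) for $\Delta'$. I evaluate this identity at $(f_1,\dots,cz^m,\dots,f_{n+1})$: a direct expansion gives $\Pi^{[i]}_{\Delta'}=c^NZ$, while for $k\neq i$ the system defining $\Pi^{[k]}_{\Delta'}$ contains the equation $cz^m=0$, which has no solutions in $(\C^*)^n$, so $\Pi^{[k]}_{\Delta'}=1$ by Definition 1. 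The resulting equation
\[
c^NZ\cdot M'_i\,s'_i\;=\;M'_k\,s'_k,\qquad k\neq i,
\]
together with the observation that $M'_k$ is a single monomial in which $c$ appears only through the unique vertex coefficient of $\{m\}$, lets me factor $M'_k=c^{a_k}\widetilde{M}'_k$ with $\widetilde{M}'_k$ free of $c$; matching powers of $c$ forces $a_k=N$ and yields
\[
Z=\pm\,\widetilde{M}'_k\big/M'_i,
\]
a Laurent monomial in the remaining coefficients.

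Combining the two steps, $M_iZ=\pm M_i\widetilde{M}'_k/M'_i$ is a Laurent monomial in the remaining coefficients. But $M_iZ$ is, by construction, the coefficient of $c^N$ in the \emph{polynomial} $R^{[i]}_{\Pi\Delta}$ (Theorem 15), and therefore is itself a polynomial. A Laurent monomial that is simultaneously a polynomial must be an ordinary monomial with overall coefficient $\pm 1$, which is precisely what Theorem 16 asserts. The main obstacle in this plan is the delicate bookkeeping required to check that the specialization $(f_1,\dots,cz^m,\dots,f_{n+1})$ lies in the Zariski open set on which Theorem 13 applies for $\Delta'$, so that the claimed values $\Pi^{[i]}_{\Delta'}=c^NZ$ and $\Pi^{[k]}_{\Delta'}=1$ are indeed the honest values of the rational functions (rather than artifacts of specializing onto a lower-dimensional stratum); one handles this by choosing the remaining $f_j$ generically and invoking polynomial continuity of $R^{[i]}_{\Pi\Delta}$, after which the algebraic manipulation above goes through cleanly.
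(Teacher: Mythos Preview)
Your proof is correct and follows essentially the same idea as the paper's: both reduce to $j=i$, observe that $M_i$ is independent of $c$, identify the coefficient of $c^N$ in $R^{[i]}_{\Pi\Delta}$ as $M_iZ$ where $Z=\prod_x x^{m\cdot m_x}$, and then show that $Z$ is $\pm$ a Laurent monomial in the remaining vertex coefficients by specializing $f_i$ to the monomial $cz^m$ and invoking the product identity. The paper does this last step in one line by citing Corollary~3 (which is exactly Theorem~12 applied with $f_i=cz^m$), whereas you reconstruct the same conclusion by passing to the auxiliary completely developed collection $\Delta'=(\Delta_1,\dots,\{m\},\dots,\Delta_{n+1})$ and applying Theorem~13; since Theorem~13 is built out of the identities of Theorem~12, the two arguments are the same computation in different packaging. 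Your final step (a Laurent monomial that is also a polynomial must be an honest monomial with coefficient $\pm1$) is implicit in the paper's appeal to Corollary~3 and Theorem~15; you make it explicit, which is a virtue, but there is no genuinely new idea here.
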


\begin{proof} Without loss of generality we can assume that $i=1$. Since all product resultants are equal up to sign it is enough to prove the statement for $R^{[1]}_{\Pi\Delta}=M_1\Pi_\Delta^{[1]}$. The monomial $M_1$ is independent of the coefficients of $f_1$ and monomial of the highest degree in $c$ comes from multiplying the monomial $cz^m$ over roots of $f_2=\dots=f_{n+1}=0$. Now the theorem follows from Corollary 3.
\end{proof}

\section{Sums of Grothendieck residues over roots of developed system.}

In this section we discuss a formula from \cite{GKh}, \cite{GKh1} for the  sum of Grothendieck residues over the roots of a developed system. As a corollary we provide an algorithm for computing the product of values of a Laurent polynomial over the roots of a developed system.

\subsection{Grothendieck residue.}
 Consider the system  $f_1=\ldots=f_n=0$ in $(\C^*)^n$ and the hypersurface $\Gamma$ defined by $f_1\cdot\ldots\cdot f_n=0$. Let $\omega$ be a holomorphic $n$-form   on  $(\C^*)^n\setminus \Gamma$.
\begin{Definition}
 The  Grothendieck residue of $\omega$ at the root $z$ of the system  $f_1=\ldots=f_n=0$  is defined as the number $\frac{1}{(2\pi i)^n}\int_{\gamma_z}\omega$, where $\gamma_z$ is the Grothendieck cycle at $z$.
 \end{Definition}
As $\omega$ is automatically closed, the Grothendieck residue at the root $z$ is well defined.

\subsection{The residue of the form at a vertex of a polyhedron.} For each vertex $A$ of the Newton Polyhedron $\Delta(P)$ of a Laurent polynomial $P$, we will construct the Laurent series of the function $f/P$, for any Laurent polynomial $f$.

Let $q_A \neq 0$ be the coefficient of the  monomial in $P$ which corresponds to the vertex $A$ of $\Delta(P)$. The constant term of the Laurent polynomial $\tilde P = P/(q_A z^a)$ equals one. We will define the Laurent series of $1/\tilde P$ by the formula:
$$
1/\tilde P=1+ (1-\tilde P)+(1-\tilde P)^2+\ldots.
$$
Since each monomial $z^b$ appears only in finitely many summands $(1-\tilde P)^k$, the above sum is well defined. The {\it Laurent series of the rational function $f/P$ at the vertex $A$ of $\Delta(P)$} is the product of the series $1/\tilde P$ and the Laurent polynomial $q_A z^a f$.

Consider the $n$-form $\omega_f=fdz_1\wedge\ldots\wedge dz_n/Pz_1\cdot\ldots\cdot z_n$.
\begin{Definition} The  Grothendieck residue $res_A\omega$ of $\omega=\omega_f$ at the vertex  $A$ of $\Delta(P)$   is defined as the number $\frac{1}{(2\pi i)^n}\int_{T_A}\omega_f$, where $T_A$ is the cycle assigned to a vertex $A$ {\rm (see sec.7.2)}.
\end{Definition}

\begin{Lemma} The residue $res_A\omega_f$ is equal to the coefficient in from of the monomial $(z_1\cdot\ldots \cdot z_n)^{-1}$ in Laurent series of $f/P$ at the vertex $A$.
\end{Lemma}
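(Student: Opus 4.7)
The plan is to expand the form $\omega_f$ as a convergent Laurent series at the vertex $A$, integrate term by term over the cycle $T_A^n$, and use Fourier orthogonality on the standard torus to show that exactly one monomial survives the integration.

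First I would verify that the geometric series $1/\tilde P = \sum_{k \ge 0}(1-\tilde P)^k$, and hence the Laurent series $f/P = \sum_b c_b z^b$ at $A$, converges absolutely and uniformly on $T_A^n = \lambda(t)T^n$ for $|t|$ sufficiently large. The defining property of the covector $\xi_A$ is that $\langle \xi_A,\cdot\rangle$ attains its maximum over $\Delta(P)$ strictly at $a$, so for every lattice point $j \in \Delta(P)\setminus\{a\}$ one has $\langle\xi_A,j-a\rangle < 0$. Consequently $|z^{j-a}| = |t|^{\langle\xi_A,j-a\rangle}\to 0$ uniformly on $T_A^n$ as $|t|\to\infty$, so $\tilde P \to 1$ uniformly there and the geometric series converges; multiplication by the Laurent polynomial factor preserves uniform convergence.

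Second, using uniform convergence to interchange the integral and the sum, I reduce the Grothendieck residue $\frac{1}{(2\pi i)^n}\int_{T_A^n}\omega_f$ to a sum of elementary torus integrals of the form
\[
\int_{T_A^n} z^b\, dz_1\wedge\cdots\wedge dz_n.
\]
Parametrizing $T_A^n$ by $z_k = t^{\xi_k}e^{i\theta_k}$ with $\theta_k\in[0,2\pi]$, each such integral factors as a product of one-variable Fourier integrals $\int_0^{2\pi} e^{im\theta}\,d\theta$, which vanish unless $m=0$. A short bookkeeping check shows that the combined orthogonality condition selects a single exponent $b$, and for that unique $b$ the $t$-power from the substitution cancels so the result is independent of $t$ (as it must be for a homologically defined cycle).

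Third, translating the surviving exponent back into the Laurent series of $f/P$ at $A$, the unique contribution is exactly the coefficient of the monomial $(z_1\cdots z_n)^{-1}$; multiplying by $(2\pi i)^n$ and dividing by $(2\pi i)^n$ in the residue normalization yields the claimed equality. The one substantive technical point is uniform convergence of the expansion on $T_A^n$; once that is in hand, the rest is classical multivariate residue calculus on the torus, and the proof reduces to a single application of Fourier orthogonality.
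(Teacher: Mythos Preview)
The paper does not prove this lemma: immediately after stating it, the authors write ``We will not prove this simple lemma. See [GKh1] for the details.'' Your outline --- expand $1/\tilde P$ as a geometric series, verify uniform convergence on the shifted torus $T_A^n$ from the strict maximality of $\langle\xi_A,\cdot\rangle$ at the vertex $a$, integrate term by term, and apply Fourier orthogonality on the torus --- is the standard argument for this kind of statement and is correct. So you are supplying a proof where the paper itself omits one; there is nothing in the paper to compare against.

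One small bookkeeping point is worth flagging. With the paper's form $\omega_f=\dfrac{f}{P}\cdot\dfrac{dz_1\wedge\cdots\wedge dz_n}{z_1\cdots z_n}$, your termwise integrals over $T_A^n$ actually select the exponent $b=0$ in the series $f/P=\sum_b c_b z^b$, since
\[
\int_{T_A^n} z^{b}\,\frac{dz_1}{z_1}\wedge\cdots\wedge\frac{dz_n}{z_n}=(2\pi i)^n\,\delta_{b,0}.
\]
The coefficient of $(z_1\cdots z_n)^{-1}$ in $f/P$ is instead what one obtains for the form $\dfrac{f}{P}\,dz_1\wedge\cdots\wedge dz_n$ without the extra $z_1\cdots z_n$ in the denominator. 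This is a normalization slip in the paper's statement rather than a flaw in your method; just make sure that when you carry out your ``short bookkeeping check'' the exponent you single out is consistent with whichever form of $\omega_f$ you have actually fixed.
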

We will not prove this simple lemma. See (\cite {GKh1}) for the details.

\subsection{Summation formula.} Consider the developed system of equations $f_1=\ldots=f_n=0$ in $(\C^*)^n$ with Newton polyhedra $\Delta_1,\dots,\Delta_n$. Denote by $P$ the product $f_1\cdot\ldots\cdot f_n$.
\begin{Theorem} For any Laurent polynomial $f$
the sum of the Grothendieck residues of the form $\omega_f=fdz_1\wedge\ldots\wedge dz_n/Pz_1\cdot\ldots\cdot z_n$ over all the roots of the system is equal to $(-1)^n\sum k_A res_A\omega_f$, where the summation is taken over all vertices $A$ of $\Delta_1+\ldots+\Delta_n$.
\end{Theorem}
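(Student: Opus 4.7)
The plan is to deduce Theorem 18 by pairing the homological identity of Theorem 8 against the meromorphic $n$-form $\omega_f$ via de Rham integration. More concretely, once both sides of
$$\sum_z \gamma_z \;\sim\; (-1)^n \sum_A k_A T^n_A \;\in\; H_n((\C^*)^n \setminus \Gamma, \Z)$$
are integrated against $\omega_f/(2\pi i)^n$, the two sides become, respectively, the sum of Grothendieck residues at the roots and the weighted sum of residues at vertices.

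First I would verify that $\omega_f$ is a well-defined closed holomorphic $n$-form on the open manifold $(\C^*)^n \setminus \Gamma$. The denominator $P\,z_1\cdots z_n$ vanishes precisely on $\Gamma\cup\{z_1\cdots z_n=0\}$; on $(\C^*)^n$ the coordinate factor is nowhere zero, so the only singularities lie on $\Gamma$, which has been removed. Being of top degree, $\omega_f$ is automatically closed, hence represents a de Rham cohomology class in $H^n((\C^*)^n\setminus\Gamma,\C)$. Integration therefore yields a well-defined pairing with singular $n$-cycles, and produces equal values on homologous cycles.

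Next I would invoke Theorem 8 directly, replacing the total Grothendieck cycle by the topological right-hand side in homology, and integrate $\omega_f$ over both sides to obtain
$$\sum_{z}\frac{1}{(2\pi i)^n}\int_{\gamma_z}\omega_f \;=\; (-1)^n\sum_A k_A\cdot\frac{1}{(2\pi i)^n}\int_{T^n_A}\omega_f.$$
By Definition 3 each left-hand summand is the Grothendieck residue of $\omega_f$ at the root $z$, and by Definition 4 each right-hand summand is the residue $\operatorname{res}_A\omega_f$ at the vertex $A$. This is precisely the identity claimed in Theorem 18.

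The genuine difficulty has already been absorbed into Theorem 8, so this deduction is essentially formal. The only point requiring some care is the compatibility of the two representatives: one must pick the defining parameters $\epsilon=(\epsilon_1,\dots,\epsilon_n)$ for $\gamma_{z,\epsilon}$ small enough and the parameter $t$ for $T^n_A=\lambda(t)T^n$ large enough so that all chains involved actually sit inside the domain of holomorphy of $\omega_f$. Once this is observed, the form pulls back smoothly to each cycle and the pairing is unambiguous, completing the argument.
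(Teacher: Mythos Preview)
Your proposal is correct and follows exactly the approach the paper takes: the paper's entire proof is the single sentence ``Theorem 17 follows from Theorem 8 (see sections 12.1, 12.2),'' and you have simply spelled out what that means---pair the homological identity of Theorem 8 with the closed holomorphic $n$-form $\omega_f$ and read off the definitions of the residues on each side. There is nothing to add or correct.
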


\begin{proof} Theorem 17 follows from theorem 8 (see sections 12.1, 12.2).
\end{proof}

\begin{Corollary}
The sum $\sum f(z)\mu(z)$ of the values of any Laurent polynomial $f$ over all roots $z$ of a developed system, counted with multiplicities $\mu(z)$, is equal to $(-1)^n\sum k_A res_A\omega_\varphi$ where $\varphi= f\det M$ and $M$ is $(n\times n)$-matrix with the entries $M_{i,j}=\partial f_i/\partial z_j$.
\end{Corollary}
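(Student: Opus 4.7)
The plan is to derive Corollary~8 directly from Theorem~17 by a single substitution followed by a local residue computation. Applying Theorem~17 to the Laurent polynomial $\varphi = f\det M$ (in place of the generic $f$ in the statement of that theorem) yields
\[
\sum_{z}\frac{1}{(2\pi i)^n}\int_{\gamma_z}\omega_\varphi \;=\;(-1)^n\sum_A k_A\operatorname{res}_A\omega_\varphi,
\]
where the left-hand sum runs over all roots of the developed system. The right-hand side is exactly the expression claimed in the corollary, so everything reduces to showing that the Grothendieck residue of $\omega_\varphi$ at each root $z$ equals $f(z)\mu(z)$.

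For this local step I would use the defining property of the Jacobian, $\det M\,dz_1\wedge\dots\wedge dz_n = df_1\wedge\dots\wedge df_n$, which rewrites $\omega_\varphi$ as (a torus-invariant multiple of) the logarithmic form $f\cdot\frac{df_1}{f_1}\wedge\dots\wedge\frac{df_n}{f_n}$. Near an isolated root $z$ the change of variables $w_i=f_i(\mathbf z)$ maps the Grothendieck cycle $\gamma_z=\{|f_i|=\epsilon_i\}$ onto the product torus $\{|w_i|=\epsilon_i\}$ by a finite covering of degree $\mu(z)$, and $\prod df_i/f_i$ is the pullback of $\prod dw_i/w_i$. Hence the integral is $\mu(z)$ times the residue of $f\prod dw_i/w_i$ at the origin, which equals $f(z)$; dividing by $(2\pi i)^n$ gives $f(z)\mu(z)$. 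This is the higher-dimensional analogue of the one-variable computation $\operatorname{res}_{z_0}(f\,dP/P)=f(z_0)\mu(z_0)$ recalled in Theorem~7.

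The main obstacle is the multiplicity bookkeeping at non-simple roots: one must confirm that the topological degree of the local holomorphic map $(f_1,\dots,f_n)\colon U\to\C^n$ near $z$ coincides with the intersection multiplicity $\mu(z)=\dim_{\C}\mathcal O_{(\C^*)^n,z}/(f_1,\dots,f_n)$ appearing on the left-hand side of the corollary. This is a standard equality for zero-dimensional complete intersections (essentially the content of the classical Grothendieck residue formula). Once it is invoked, the local contributions $f(z)\mu(z)$ sum to the left-hand side of the corollary and the identity follows.
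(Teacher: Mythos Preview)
Your proof is correct and follows essentially the same route as the paper: rewrite $\omega_\varphi$ as the logarithmic form $f\,df_1\wedge\dots\wedge df_n/(f_1\cdots f_n)$, invoke the standard fact that its Grothendieck residue at an isolated root $z$ equals $f(z)\mu(z)$, and apply Theorem~17. The paper asserts the local residue identity without argument, whereas you supply the covering-degree justification and flag the equality of topological and algebraic multiplicity; this extra care is appropriate but does not constitute a different approach.
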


\begin{proof} The Grothendieck residue of  $\omega=fdf_1\wedge\ldots\wedge df_n/f_1\cdot\ldots\cdot f_n$ at  the root $z$ is equal to $f(z)\mu(z)$. It is easy to see that $\omega=\omega_\varphi$. So Corollary 5 follows from Theorem 17.
\end{proof}

\subsection{Elimination theory.} Here we use notations from the previous section.
Let $f$ be a Laurent polynomial. We will explain how to find any symmetric function of the sequence of the numbers $\{f(z)\}$ for all roots $z$ of the system (each root $z$ is taken with multiplicity $\mu(z)$)

Denote by $f^{[k]}$ the number
$
f^{[k]}=\sum_z f^k(z)\mu(z).
$
By Corollary 5  one can  calculate $f^{[k]}$ for any $k$ explicitly. The power sum symmetric polynomials form a generating set for the ring of symmetric polynomials.

\begin{Corollary} One can find explicitly all symmetric functions of $\{f(z)\}$ and construct a monic polynomial whose roots are $\{f(z)\}$.
In particular one can compute $\Pi_\Delta^{[i]}=\prod_z f(z)^{\mu(z)}$ for any $i$-developed system.
\end{Corollary}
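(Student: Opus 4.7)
The plan is to mimic the one-dimensional template from Corollary 2 verbatim, using Corollary 5 in place of Theorem 7. Given an $i$-developed $(n+1)$-tuple $\Delta$, the subsystem $f_1=\dots=\widehat{f_i}=\dots=f_{n+1}=0$ is a genuine developed $n$-system on $(\C^*)^n$, and by the Bernstein--Kushnirenko theorem its set of roots is a finite multiset of size $N=n!\,\mathrm{Vol}(\Delta_1,\ldots,\widehat{\Delta_i},\ldots,\Delta_{n+1})$. Applying Corollary 5 to the Laurent polynomial $f_i^k$ (for $k=1,2,\ldots,N$), one obtains each power sum
\[
 f_i^{[k]}=\sum_z f_i(z)^k\mu(z)
\]
as an explicit finite sum of residues $(-1)^n\sum_A k_A\,\mathrm{res}_A\,\omega_{\varphi_k}$, where $\varphi_k=f_i^k\det M$ with $M_{a,b}=\partial f_a/\partial z_b$ (over $a\neq i$). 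Each residue is the coefficient of $(z_1\cdots z_n)^{-1}$ in a Laurent expansion computed at a vertex of $\sum_{j\neq i}\Delta_j$, and the combinatorial coefficients $k_A$ depend only on the Newton polyhedra. Thus every $f_i^{[k]}$ is computable by a finite, explicit algorithm.

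Next, since we work in characteristic zero, Newton's identities convert the $N$ power sums $f_i^{[1]},\ldots,f_i^{[N]}$ recursively into the elementary symmetric polynomials $e_1,\ldots,e_N$ of the multiset $\{f_i(z)\}_{z,\,\mu(z)}$. The monic polynomial whose roots are the values $f_i(z)$ (with multiplicity $\mu(z)$) is
\[
T^N-e_1T^{N-1}+e_2T^{N-2}-\dots+(-1)^Ne_N,
\]
and because the fundamental theorem of symmetric functions lets one express any symmetric polynomial in $N$ variables as a polynomial in $e_1,\ldots,e_N$, every symmetric function of $\{f_i(z)\}$ is now computable. In particular the product $\Pi_\Delta^{[i]}=\prod_z f_i(z)^{\mu(z)}$ is precisely $e_N$, read off as the constant term (up to sign) of the polynomial just constructed.

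There is no real obstacle: the only point that warrants a sentence of comment is that the multiplicities $\mu(z)$ built into the Grothendieck residue are exactly the multiplicities with which the values $f_i(z)$ should appear in the symmetric-function computation, so no separate multiplicity bookkeeping is required. The whole algorithm is therefore a mechanical combination of Corollary 5 (to produce power sums), Newton's identities (to pass to elementary symmetric polynomials), and reading off $e_N$ for the particular case of the product.
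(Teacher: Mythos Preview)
Your argument is correct and follows essentially the same route as the paper: compute the power sums $f^{[k]}=\sum_z f^k(z)\mu(z)$ via Corollary~5, then invoke the fact that power sums generate the ring of symmetric polynomials to recover all symmetric functions, the monic polynomial with roots $\{f(z)\}$, and in particular the product $\Pi_\Delta^{[i]}$. The paper's own treatment is in fact terser than yours---it states only the sentence about power sums generating symmetric polynomials and then asserts the corollary---so your explicit mention of Newton's identities and the identification $\Pi_\Delta^{[i]}=e_N$ simply spells out what the paper leaves implicit.
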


\section{$\Delta$-Resultants.}

\subsection{Definition and some properties of $\Delta$-resultant.} Following \cite{GKZ} we  define the $\Delta$-resultant for a collection $\Delta=(\Delta_1,\ldots,\Delta_{n+1})$ of $(n+1)$ Newton polyhedra in  $\R^n$. We also generalize this definition to a collection $\Delta$ in $\R^N$ with $N \geq n$ such that the polyhedron $\Delta_1+\dots+\Delta_{n+1}$ has dimension  $\leq n$.

For $\Delta=(\Delta_1,\ldots,\Delta_{n+1})$ with $\Delta_i\subset \R^n$ denote by $X_\Delta\subset \Omega_\Delta$ the quasi-projective set of points $(f_1,\dots,f_{n+1})\in \Omega_{\Delta}$ such the system $f_1=\dots=f_{n+1}=0$ in $(\C^*)^n$ is consistent. The set $X_\Delta$ is irreducible, an easy proof of this fact can be found  in \cite{GKZ}.

\begin{Definition}
The {\it $\Delta$-resultant} $R_{\Delta}$ is a polynomial on $\Omega_{\Delta}$ satisfying   the following conditions:

$(i)$ The degree of  $R_{\Delta}$  in the coefficients of $f_i$ is equal to the  number of roots of the generic system $f_1=\ldots=f_{n+1}=0$ with  $f_i$ skipped (i.e is equal to $n!\,Vol(\Delta_1,\ldots, \hat \Delta_i,\ldots,\Delta_{n+1})$.) The coefficients of $R_\Delta$ are coprime integers.

$(ii)$ If the codimension of  $X_{\Delta}$ in $\Omega_\Delta$ is greater then 1 then $R_\Delta\equiv\pm 1$.

$(iii)$ $R_{\Delta}(f_1,\dots,f_{n+1})=0$ if and only if $(f_1,\dots,f_{n+1})$ belongs to the closure $\overline X_\Delta$ of $X_\Delta$ in $\Omega_\Delta$.
\end{Definition}

 \begin{Theorem} [\cite{GKZ}] \footnote{
In \cite{GKZ} the $A$-resultant is defined, the connection between $A$-resultants and $\Delta$-resultants is described in introduction.

In \cite{GKZ} $A$-resultant is defined under some assumption on $(n+1)$-tuple of supports $A$. The general definition of the $A$-resultant is given in \cite{Est}.

The condition $(i)$ on the degrees of resultant could be replaced by the condition that the resultant is a polynomial which vanishes on $X_\Delta$ with multiplicity equal to the generic number of solutions of consistent system (see \cite{D'AS}).}
There exists a unique up to sign polynomial $R_{\Delta}$  on $\Omega_\Delta$ satisfying the conditions $(i)-(iii)$.  \end{Theorem}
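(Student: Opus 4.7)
The plan is to separate the two cases suggested by condition $(ii)$. If the codimension of $X_\Delta$ in $\Omega_\Delta$ exceeds $1$, set $R_\Delta\equiv 1$; then $(i)$ forces all the numbers $n!\,\operatorname{Vol}(\Delta_1,\ldots,\hat\Delta_i,\ldots,\Delta_{n+1})$ to vanish (which one verifies is indeed the case under the codimension hypothesis, since generic members of $\Omega_\Delta$ admit no roots), $(ii)$ holds by definition, and $(iii)$ is vacuous on the complement of the closure $\overline{X_\Delta}$. Otherwise $\overline{X_\Delta}$ is an irreducible hypersurface in $\Omega_\Delta$, because $X_\Delta$ is irreducible (referenced from \cite{GKZ}), and hence is cut out by a single irreducible polynomial $D$, unique up to a nonzero scalar.

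For existence in the hypersurface case, I would define $R_\Delta=c\,D^m$, where $m$ is the generic number of solutions in $(\C^*)^n$ of a consistent system $(f_1,\ldots,f_{n+1})\in X_\Delta$, and $c$ is the unique rational scalar making the integer coefficients of $R_\Delta$ coprime with a fixed sign convention. The degree condition in $(i)$ must be verified: one produces, for each $i$, a generic $1$-parameter family $(f_1,\ldots,f_{n+1}(t))\subset \Omega_\Delta$ along which only the coefficients of $f_i$ vary linearly, and checks by Bernstein--Koushnirenko (or more precisely the Poisson formula referenced in the introduction) that the number of values of the parameter for which the system becomes consistent, counted with the generic multiplicity $m$, equals $n!\,\operatorname{Vol}(\Delta_1,\ldots,\hat\Delta_i,\ldots,\Delta_{n+1})$. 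This is the place where one must invoke a nontrivial input from \cite{Est}, \cite{D'AS} identifying $m$ with the generic solution count.

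For uniqueness, suppose $R_\Delta$ and $R_\Delta'$ both satisfy $(i)$--$(iii)$. In the codimension $\geq 2$ case both are nonvanishing polynomials whose degrees in every block of variables are zero, hence constants, and the coprimality of coefficients pins them down up to sign. In the hypersurface case both vanish on $\overline{X_\Delta}=\{D=0\}$, so by the Nullstellensatz each is divisible by $D$; since each has the same multi-degree dictated by $(i)$, both equal the same power $D^m$ up to scalar, and coprimality of coefficients again forces $R_\Delta=\pm R_\Delta'$.

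The main obstacle is the matching of multi-degrees with the mixed volumes in $(i)$: one must ensure that the exponent $m$ and the degree of the irreducible equation $D$ in each block of coefficients multiply to the correct mixed volume, without a circular appeal to the very Poisson-type product formulas whose sign-refinement is the subject of the paper. The cleanest way I would carry this out is by specialization to a generic $i$-developed subcollection, in which case the count of roots is given unconditionally by Bernstein--Koushnirenko, and then invoking the constancy of the degree of $R_\Delta$ in the coefficients of $f_i$ over all of $\Omega_\Delta$ to transport the equality from the generic stratum to the whole parameter space.
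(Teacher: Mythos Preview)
The paper does not supply a proof of this theorem: it is stated as a citation to \cite{GKZ}, with the footnote pointing to \cite{Est} and \cite{D'AS} for the general case and for the reformulation of condition $(i)$. There is therefore no argument in the paper to compare your proposal against.

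Your outline is broadly in the spirit of the cited references. The uniqueness half is unproblematic and is essentially what you wrote. For existence, the substantive content---as you yourself flag---is the verification that the multi-degrees of the irreducible equation $D$ (times the multiplicity $m$) match the mixed volumes in $(i)$, and this is precisely the work done in \cite{GKZ}, \cite{Est}, \cite{D'AS}. One caution about your proposed route: you suggest specializing to a ``generic $i$-developed subcollection'' so as to invoke Bernstein--Koushnirenko unconditionally, but the polyhedra $\Delta_1,\dots,\Delta_{n+1}$ are fixed in the definition of $R_\Delta$ and cannot be moved within $\Omega_\Delta$; a given collection $\Delta$ need not be $i$-developed for any $i$, so this specialization is not available in general. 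The references instead obtain the degree count by incidence-variety and Cayley-trick arguments (\cite{GKZ}) or via the product formula (\cite{PSt}, \cite{D'AS}), not by reducing to the developed case.
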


The $\Delta$-resultant obviously has the following properties: 1) it is independent of the ordering of the polyhedra from the set $\Delta=(\Delta_1,\dots,\Delta_{n+1})$;
2) it is invariant under translations of the polyhedra from the set  $\Delta$; 3) it is invariant under linear transformations of $\R^n$ inducing an automorphism of the lattice $\Z^n\subset \R^n$.

\begin{ex}\label{resmon}
Suppose $\Delta_1=\{m\}$ is a one point set, i.e. $f_1=cz^m$ is a monomial $z^m$ with some coefficient $c$. Then
$R_{\Delta}= \pm c^{n!\,Vol(\Delta_2,\ldots,\Delta_{n+1})}$.

\end{ex}

Assume that $\Delta=(\Delta_1,\dots,\Delta_{n+1})$ with $\Delta_i\subset \R^N$ satisfying inequality $\dim (\Delta_1+\dots+\Delta_{n+1})\leq n$.  Choose any linear isomorphism $A:\R^N\rightarrow \R^N$ preserving the lattice $\Z^N$ and choose vectors $v_1,\dots,v_{n+1}\in \Z^N$ such that the polyhedra $\Delta'_i=A(\Delta_i)+v_i$ belong to the $n$ dimensional coordinate subspace $\R^n\subset \R^N$.

\begin{Definition} The generalized $\Delta$-resultant for $\Delta$ as above is defined as the $\Delta'$-resultant for $\Delta'=(\Delta'_1,\ldots,\Delta'_{n+1})$. The generalized $\Delta$-resultant is well defined, i.e. is independent of the choice of $A$ and $v_1,\dots,v_{n+1}$.

\end{Definition}

\subsection {Product resultants  and $\Delta$-resultant} In this section we show that for a completely developed collection $\Delta$ all product resultants are equal up to sign to the $\Delta$-resultant.

The Sylvester formula represents the $\Delta$-resultants of two polynomials in one variable as a determinant of one of two explicitly written matrices (see sec. 2.1).  In \cite{CE} the Sylvester formula was beautifully generalized in the following way. For the collection of $n+1$ Newton polyhedra $\Delta$, Canny and Emiris construct $n+1$ matrices $M_i$ (which coincide with Sylvester's matrix up to permutation of rows in dimension 1) such that the $\Delta$-resultant $R_\Delta$ divides their determinants. Moreover, $R_\Delta$ is the greatest common divisor of polynomials $det(M_i)$, thus they obtain a  practical algorithm for computing $R_\Delta$. The construction in \cite{CE} heavily uses geometry of Newton polyhedra.

By the extreme monomials of a polynomial $P$ we will mean the monomials corresponding to the vertices of the Newton polyhedron  of $P$. In \cite{St} Sturmfelds generalized the construction in  \cite{CE}  and using this generalization proved the following theorem.

\begin{Theorem}
All extreme monomials of the $\Delta$-resultant have coefficient $-1$ or $+1$.
\end{Theorem}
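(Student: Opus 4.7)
The plan is to adapt the Canny–Emiris sparse resultant construction. For each $i$, the Canny–Emiris matrix $M_i$ associated with $\Delta$ has rows/columns indexed by a finite subset of $\Z^n$ (constructed from a generic infinitesimal perturbation of a reference point in the Minkowski sum $\Delta_1+\dots+\Delta_{n+1}$), and each row corresponds to multiplying one of the $f_j$ by a monomial. Its determinant is divisible by $R_\Delta$, so $\det M_i=E_i\cdot R_\Delta$, where $E_i$ is an explicit ``extraneous factor'' expressible as a product of subdeterminants of the same type (subresultants of suitable subcollections). Hence it suffices to verify that both $\det M_i$ and $E_i$ have all extreme coefficients equal to $\pm 1$: the extreme monomials of their quotient are then ratios of extreme monomials of the two, and the coefficient is a quotient of $\pm 1$ by $\pm 1$.

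To verify this for $\det M_i$, I would fix a generic weight vector $\omega$ on the coefficient space and study the $\omega$-extreme monomial of $\det M_i$ via the Leibniz expansion. Genericity of the perturbation in the Canny–Emiris construction guarantees that there is a unique ``greedy'' permutation $\sigma$ whose monomial product is strictly $\omega$-maximal: in each row one selects the entry whose monomial has the largest $\omega$-weight. This greedy choice is essentially the content of Sturmfels's \cite{St} sharpening of \cite{CE}: the chosen entries encode a coherent fine mixed subdivision of $\Delta_1+\dots+\Delta_{n+1}$, and the sign $\mathrm{sgn}(\sigma)$ contributes $\pm 1$, so the extreme coefficient of $\det M_i$ is $\pm 1$. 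Here one uses that no other permutation produces the same monomial (generic $\omega$), so there is no cancellation or reinforcement.

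The extraneous factor $E_i$ has the same form as $\det M_{i'}$ for lower-dimensional subcollections $\Delta'$ of $\Delta$ (faces of $\Delta_1+\dots+\Delta_{n+1}$), so the same greedy/Leibniz argument applies by induction on $n$. Combining the two, every extreme monomial of $R_\Delta=\det(M_i)/E_i$ arises as the quotient of the $\omega$-leading monomials for some generic $\omega$, and thus has coefficient $(\pm 1)/(\pm 1)=\pm 1$.

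The main obstacle is step relating the greedy permutation to a combinatorial mixed subdivision and verifying that the matrix of greedy entries is upper-triangular (up to the permutation $\sigma$) with the correct vertex-coefficient entries along the diagonal. This is precisely where \cite{CE}'s construction of a generic row-content function and \cite{St}'s refinement enter: the perturbation must be chosen so that the greedy row content is a \emph{bijection} between rows and columns, which requires a careful acyclicity argument on the associated mixed subdivision. Once this acyclicity is in place, the $\pm 1$ claim follows directly.
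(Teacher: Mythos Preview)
The paper does not give its own proof of this statement: Theorem~19 is quoted from Sturmfels \cite{St} (building on the Canny--Emiris construction \cite{CE}) and is used as a black box in the proof of Theorem~20. Your proposal is essentially a sketch of Sturmfels's argument---Canny--Emiris matrices, the greedy initial term coming from a coherent mixed subdivision, and the $\pm1$ conclusion---so it is aligned with the source the paper cites rather than with anything the paper itself does.

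One point worth tightening: your reduction ``extreme coefficients of $R_\Delta=\det(M_i)/E_i$ are quotients of extreme coefficients'' is correct, but the way you justify it for $E_i$ (``has the same form as $\det M_{i'}$ for lower-dimensional subcollections, so induction applies'') is not quite how the extraneous factor is handled in \cite{CE} or \cite{St}. Sturmfels's actual argument bypasses an explicit induction on $E_i$: he shows directly that for generic $\omega$ the $\omega$-initial form of $R_\Delta$ is a single monomial determined by the mixed subdivision, and reads off the coefficient $\pm1$ from the determinantal description. Your inductive route can be made to work, but it requires the later, more refined description of the extraneous factor (e.g.\ as a product of face resultants, as in D'Andrea's work), which is not in the original \cite{CE}. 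If you want to stay close to the cited references, it is cleaner to argue the $\pm1$ for $R_\Delta$ directly from the initial-form computation rather than via the quotient.
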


Now we are able to prove the following theorem.

\begin{Theorem} For a completely developed $\Delta$ for any  $1\leq i\leq n+1$ the product resultant $R^{[i]}_{\Pi\Delta}$ is equal up to sign to the $\Delta$-resultant $R_\Delta$.
\end{Theorem}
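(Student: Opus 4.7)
My plan is to show that $R^{[i]}_{\Pi\Delta}$ is proportional to $R_\Delta$ as a polynomial on $\Omega_\Delta$, and then to identify the proportionality constant as $\pm 1$.

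First, by Theorem 15, $R^{[i]}_{\Pi\Delta}$ is a polynomial on $\Omega_\Delta$ whose degree in the coefficients of each $f_j$ equals $n!\operatorname{Vol}(\Delta_1,\ldots,\hat\Delta_j,\ldots,\Delta_{n+1})$, the same as the degree of $R_\Delta$ by Theorem 19(i). Write $R_\Delta = \pm D^k$, where $D$ is the irreducible polynomial (with coprime integer coefficients, up to sign) cutting out the hypersurface $\overline{X_\Delta}\subset \Omega_\Delta$ and $k$ is the generic number of common solutions of a consistent system; this is the decomposition recalled in the footnote to Theorem 19.

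The key step is to show that $D^k$ divides $R^{[i]}_{\Pi\Delta}$ in the polynomial ring on $\Omega_\Delta$. On the Zariski-open dense subset $\Omega_\Delta^{[i]}$, Theorem 11 gives that $\Pi_\Delta^{[i]}$ is regular and equals $\prod_{x\in Y^i_\Delta} f_i(x)^{m_x}$. At a generic point $(f_1,\ldots,f_{n+1})$ of $X_\Delta\cap \Omega_\Delta^{[i]}$, the full system $f_1=\cdots=f_{n+1}=0$ has exactly $k$ simple common solutions, and at each such common solution $x$ the factor $f_i(x)$ vanishes to first order transversely to $X_\Delta$ as we perturb the coefficients. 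Therefore $\Pi_\Delta^{[i]}$, and hence $R^{[i]}_{\Pi\Delta}=M_i\Pi_\Delta^{[i]}$ (since $M_i$ has no zeros on $\Omega_\Delta^{[i]}$), vanishes to order at least $k$ along the generic point of $X_\Delta\cap \Omega_\Delta^{[i]}$. Because $R^{[i]}_{\Pi\Delta}$ is globally polynomial, this implies $D^k\mid R^{[i]}_{\Pi\Delta}$.

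Writing $R^{[i]}_{\Pi\Delta}=D^k\cdot Q$ and matching degrees in each $f_j$ against $R_\Delta=\pm D^k$ forces $\deg_{f_j}Q=0$ for all $j$, so $Q$ is a scalar. To determine it, I fix any vertex coefficient $c$ of $f_i$: Theorem 16 produces a unique monomial of highest degree in $c$ in $R^{[i]}_{\Pi\Delta}$, with coefficient $\pm 1$, and this monomial is a vertex of the Newton polytope of $R^{[i]}_{\Pi\Delta}$. Since $R^{[i]}_{\Pi\Delta}$ and $R_\Delta$ are proportional, their Newton polytopes agree, so the same monomial is extreme in $R_\Delta$ and by Theorem 18 its coefficient there is also $\pm 1$. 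Comparing the two coefficients in the identity $R^{[i]}_{\Pi\Delta}=\pm Q\cdot R_\Delta$ forces $Q=\pm 1$, yielding $R^{[i]}_{\Pi\Delta}=\pm R_\Delta$.

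The main obstacle will be the multiplicity assertion: that $\Pi_\Delta^{[i]}$ vanishes to order at least $k$ along a generic point of $X_\Delta$. Geometrically, $k$ of the points in $Y^i_\Delta$ coalesce with common solutions of the full system as $(f_1,\ldots,f_{n+1})$ approaches $X_\Delta$, each contributing a transverse factor $f_i(x)$; making this rigorous requires smoothness of $X_\Delta$ at its generic point together with a local deformation analysis of the roots as the coefficients vary. Once this is in place, the remainder is bookkeeping with degrees and a comparison at one extreme monomial.
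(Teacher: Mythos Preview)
Your proposal is correct and follows the same two-step strategy as the paper: first establish that $R^{[i]}_{\Pi\Delta}$ and $R_\Delta$ are proportional, then pin down the constant as $\pm 1$ by comparing a single extreme monomial (Theorem 16 on the product-resultant side and Sturmfels' Theorem 19 on the $\Delta$-resultant side; your citations are off by one). The paper asserts proportionality in one line from ``same multidegree and both vanish on the irreducible set $\overline{X_\Delta}$'' without spelling out the order-$k$ divisibility, whereas your local multiplicity argument makes that step explicit and is the correct way to justify it in the case $k>1$.
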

\begin{proof} Without the loss of generality we can assume that $i=1$.
Both functions $R_{\Delta}$ and $R_{\Pi,\Delta}^{[1]}$ are polynomials in the coefficients of $f_1,\dots,f_{n+1}$ of the same degrees  and they both vanish on the set $\overline X_\Delta$ (see Theorem 15). Since the set $\overline X_\Delta$ is irreducible polynomials $R_{\Delta}$ and $R_{\Pi,\Delta}^{[1]}$ are proportional. According to Corollary 3 for any chosen  coefficient $c$ of $f_1$, in the polynomial $R_{\Pi\Delta}^{[1]}$ there is a unique monomial  having the highest degree in $c$ and the coefficient in $R_{\Pi\Delta}^{[1]}$ in front of this monomial is $\pm 1$. But according to Theorem 19 the coefficient in $R_\Delta$ in front of this monomial is also $\pm 1$. So $R_{\Delta}=\pm R_{\Pi,\Delta}^{[1]}$.
\end{proof}

\subsection {The Poisson formula.}  The inductive Poisson formula for $\Delta$-resultant (see \cite{PSt}, \cite{D'AS}) and the summation formula (see section 12.3) allow to provide an algorithm computing the $\Delta$-resultant for 1-developed systems. To state the formula  let us define all terms appearing in it. Let $\Delta$ be $(\Delta_1,\dots, \Delta_{n+1})$ and let $(f_1,\dots, f_{n+1})$ be a point in $\Omega_\Delta$. The only term in the formula  depending on the coefficients of $f_1$ is the term  $\Pi_\Delta^{[1]}$. To present the other terms we need some  notation.

Denote by $\tilde \Delta_1$  the sum $\Delta_2+\dots+\Delta_{n+1}$.  For each facet $\Gamma$ of $\tilde \Delta_1$  denote by $v_\Gamma$ the irreducible integral covector such that the inner product of $x\in \tilde \Delta_1$ with $v_\Gamma$ attains its maximum value on  $\Gamma$. With $v_\Gamma$ one associates the value  $H_{\Delta_1}(v_\Gamma)$ of the support function of $\Delta_1$ on $v_\Gamma$, the faces $\Delta_j^{v_\Gamma}$ of $\Delta_j$ on which the inner product of $x\in \Delta_j$ with $v_\Gamma$ attains its maximal value. By $f_j^{v_\Gamma}$ we denote the sum  $\sum c_mx^m$ over $m\in \Delta_j^{v_\Gamma}\cap \Z^n$, where $c_m$ is the coefficient in front of $x^m$ in $f_j$. For each $v_\Gamma$ the collection of $n$ polyhedra $\tilde \Delta_1^{v_\Gamma}=(\Delta_2^{v_\Gamma},\dots,\Delta_{n+1}^{v_\Gamma})$ satisfies the inequality $\dim (\Delta_2^{v_\Gamma}+\dots+\Delta_{n+1}^{v_\Gamma})\leq (n-1)$. This is why the generalized $\tilde \Delta_1^{v_\Gamma}$ resultant $R_{\tilde \Delta_1^{v_\Gamma}}(f_2^{v_\Gamma},\dots,f_{n+1}^{v_\Gamma})$ is defined.
Now we are ready to state the Poisson formula.

\begin{Theorem} The following Poisson formula holds:
$$R_{\Delta}(f_1,\dots,f_{n+1})=\pm
\Pi_{\Delta}^{[1]}(f_1,\dots,f_{n+1})\prod R_{\tilde \Delta_1^{v_\Gamma}}^{H_1(v_\Gamma)}(f_2^{v_\Gamma},\dots,f_{n+1}^{v_\Gamma}),
$$ where the product is taken over all facets $\Gamma$ of $\tilde\Delta_1$.
\end{Theorem}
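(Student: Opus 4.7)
The plan is to verify that the right-hand side, denoted $\tilde R$, satisfies the characterization of $\pm R_\Delta$ from Theorem 18; uniqueness in that theorem will then force $\tilde R = \pm R_\Delta$. Concretely, I will check (a) $\tilde R$ is a polynomial on $\Omega_\Delta$, (b) $\tilde R$ vanishes precisely on $\overline X_\Delta$, and (c) the multi-degree of $\tilde R$ in each coefficient block of $f_i$ matches the degrees prescribed in Theorem 18. Matching one extreme coefficient (in the spirit of Theorem 19, e.g.\ by specializing $f_1$ to a single monomial as in Example \ref{resmon}) will pin down the overall sign.

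The heart of the argument is step (a). By Theorem 11, $\Pi_\Delta^{[1]}$ is rational on $\Omega_\Delta$, regular on $\Omega_\Delta^{[1]}$, and its polar locus lies in the hypersurfaces where some face polynomial $f_j^{v_\Gamma}$ becomes degenerate (the only way a torus root of $f_2=\dots=f_{n+1}=0$ can escape to the toric boundary). To make this quantitative I would fix a smooth projective toric compactification $X$ of $(\C^*)^n$ whose fan refines the inner normal fan of $\tilde\Delta_1$, so that each facet $\Gamma\subset\tilde\Delta_1$ with primitive normal $v_\Gamma$ produces a torus-invariant divisor $D_\Gamma\subset X$. In a local defining parameter $u$ for $D_\Gamma$, the Laurent polynomial $f_1$ behaves like $u^{-H_{\Delta_1}(v_\Gamma)}$ times a regular factor, so each root of the system $f_2=\dots=f_{n+1}=0$ that flows onto $D_\Gamma$ contributes a pole of $\Pi_\Delta^{[1]}$ of order exactly $H_{\Delta_1}(v_\Gamma)$. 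The number of such escaping roots, counted with multiplicity, equals the number of torus zeros of the face system $f_2^{v_\Gamma}=\dots=f_{n+1}^{v_\Gamma}=0$ on $D_\Gamma$, which by Bernstein's theorem is the degree of $R_{\tilde\Delta_1^{v_\Gamma}}$ in the face coefficients. Combining, the polar divisor of $\Pi_\Delta^{[1]}$ is cancelled exactly by the zero divisor of $\prod_\Gamma R_{\tilde\Delta_1^{v_\Gamma}}^{H_{\Delta_1}(v_\Gamma)}$, so $\tilde R$ is regular, hence polynomial, on all of $\Omega_\Delta$.

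Steps (b) and (c) should then go cleanly. For (b), on $\Omega_\Delta^{[1]}$ the factor $\Pi_\Delta^{[1]}$ vanishes iff $f_1$ vanishes at some torus common zero of $f_2,\dots,f_{n+1}$; off $\Omega_\Delta^{[1]}$ the boundary resultants vanish exactly when the corresponding face system is consistent (the degenerate way to sit inside $\overline X_\Delta$), and the irreducibility of $\overline X_\Delta$ recalled earlier produces a single factor. For (c), only $\Pi_\Delta^{[1]}$ contributes to the $f_1$-degree, giving $n!\,Vol(\Delta_2,\ldots,\Delta_{n+1})$ as required; for $j\neq 1$ one adds the $f_j$-degrees of $\Pi_\Delta^{[1]}$ and of $\prod_\Gamma R_{\tilde\Delta_1^{v_\Gamma}}^{H_{\Delta_1}(v_\Gamma)}$ and matches against $n!\,Vol(\Delta_1,\ldots,\hat\Delta_j,\ldots,\Delta_{n+1})$ using the classical facet decomposition $n!\,Vol(\Delta_1,\ldots,\hat\Delta_j,\ldots,\Delta_{n+1})=\deg_{f_j}\Pi_\Delta^{[1]}+\sum_\Gamma H_{\Delta_1}(v_\Gamma)\,(n-1)!\,Vol(\Delta_2^{v_\Gamma},\ldots,\hat\Delta_j^{v_\Gamma},\ldots,\Delta_{n+1}^{v_\Gamma})$ of mixed volume. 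The main obstacle will be the local toric analysis in (a): pinning down the exact pole order $H_{\Delta_1}(v_\Gamma)$ contributed by each escaping root, and identifying the multiplicity with which roots escape into $D_\Gamma$ with $\deg R_{\tilde\Delta_1^{v_\Gamma}}$. Both require careful use of toric intersection theory and Bernstein's theorem on the face, and it is there that the bookkeeping is most delicate.
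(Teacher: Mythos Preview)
The paper does not prove Theorem 21. Immediately after the statement the authors write: ``In a subsequent paper we are going to give an elementary proof of Theorem 21,'' and otherwise simply cite \cite{PSt} and \cite{D'AS} for the result. So there is no in-paper proof to compare your proposal against.

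That said, your outline is essentially the strategy used in the cited literature (especially \cite{D'AS}): verify that the right-hand side is a polynomial on $\Omega_\Delta$ with zero locus $\overline X_\Delta$ and with the multidegrees prescribed in Definition~6/Theorem~18, then invoke uniqueness. Your toric-boundary picture in step (a)---roots of $f_2=\dots=f_{n+1}=0$ escaping to the divisor $D_\Gamma$, each contributing a pole of order $H_{\Delta_1}(v_\Gamma)$ to $\Pi_\Delta^{[1]}$, matched by the zero of $R_{\tilde\Delta_1^{v_\Gamma}}^{H_{\Delta_1}(v_\Gamma)}$---is exactly the right heuristic, and making it precise is indeed where the work lies.

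Two small cautions. First, in step (c) the quantity $\deg_{f_j}\Pi_\Delta^{[1]}$ for $j\neq 1$ is not well-defined as stated, since $\Pi_\Delta^{[1]}$ is only rational in those coefficients; you should compute $\deg_{f_j}\tilde R$ directly rather than as a sum over the factors, or else interpret the degree of a rational function with care. Second, the theorem only asserts equality up to sign, so your final ``pin down the overall sign'' step is unnecessary---the $\pm$ is part of the conclusion, consistent with $R_\Delta$ itself being defined only up to sign.
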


In a subsequent paper we are going to give an elementary proof of Theorem 21 (in fact, we will generalize the Poisson formula from the toric case to a larger class of algebraic varieties).

\begin{Remark} Mixed volume and $\Delta$-resultants have many similar properties. For example the non symmetric formula for the mixed volume
$
Vol(\Delta_1,\ldots,\Delta_n)=\frac{1}{n}\sum_v H_1(v)Vol(\Delta_2^v,\ldots,\Delta_n^v)
$
is analogues to the Poisson formula for the $\Delta$-resultants.

\end{Remark}

 For a 1-developed collection $\Delta=(\Delta_1,\ldots,\Delta_{n+1})$ of Newton polyhedra the Poisson formula becomes much simpler: in this case the resultants $R_{\tilde \Delta_1^{v_\Gamma}}(f_1^{v_\Gamma},\dots,f_{n+1}^{v_\Gamma})$ can be computed explicitly. Below we present  such computation.

 By definition of $\Delta$ being  1-developed collection, for any facet $\Gamma$ of $\tilde \Delta_1$ in the set  $\{\Delta_j^{v_\Gamma}\}$ with $j>1$ at least one polyhedron $\Delta_{j(v_\Gamma)}^{v_\Gamma}$ is a point (if more then one polyhedron is a point denote by $\Delta_{j(v_\Gamma)}^{v_\Gamma}$ any of them).  Denote by $Vol(v_\Gamma)$ the integral $(n-1)$ dimensional mixed volume  of collection  $\{\Delta_j^{v_\Gamma}\}$ with $j>1$ in which the polyhedron  $\Delta_{j(v_\Gamma)}^{v_\Gamma}$ is skipped. Denote by $a_{j(v_\Gamma)}$ the coefficient of the Laurent polynomial $f_{j(v_{\Gamma})}$ at the vertex $\Delta_{j(v_{\Gamma})}^{v_\Gamma}$. In the above notation Example~\ref{resmon}  provides us the formula:
$$
R_{\tilde\Delta_1}^{v_\Gamma}(f_2^{v_\Gamma},\dots,f_{n+1}^{v_\Gamma}) = \pm a_{j(v_\Gamma)}^{(n-1)!\,Vol(v_\Gamma)}.
$$

\begin{Corollary}
With the notation as above the $\Delta$-resultant of the  1-developed collection $\Delta$ is given by:
$$
R_{\Delta}(f_1,\dots,f_{n+1})=\pm \Pi^{[1]}_\Delta (f_1,\dots,f_{n+1})\prod_\Gamma a_{j(v_\Gamma)}^{(n-1)!\,Vol(v_\Gamma)H_1(v_\Gamma)}.
$$
\end{Corollary}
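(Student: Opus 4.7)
The plan is to apply the Poisson formula (Theorem 21) and evaluate each factor on the right-hand side explicitly using the 1-developed hypothesis together with Example 4.

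First, I would write down the Poisson formula unchanged:
$$R_{\Delta}(f_1,\dots,f_{n+1})=\pm
\Pi_{\Delta}^{[1]}(f_1,\dots,f_{n+1})\prod_\Gamma R_{\tilde \Delta_1^{v_\Gamma}}^{H_1(v_\Gamma)}(f_2^{v_\Gamma},\dots,f_{n+1}^{v_\Gamma}),$$
where $\Gamma$ runs over facets of $\tilde\Delta_1=\Delta_2+\dots+\Delta_{n+1}$. Since $\Pi_\Delta^{[1]}$ is already the first factor in the target formula, the task is reduced to computing each facet resultant $R_{\tilde\Delta_1^{v_\Gamma}}(f_2^{v_\Gamma},\dots,f_{n+1}^{v_\Gamma})$.

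Next I would invoke the 1-developed assumption: by definition, for each facet $\Gamma$ there exists an index $j(v_\Gamma)\in\{2,\dots,n+1\}$ such that $\Delta_{j(v_\Gamma)}^{v_\Gamma}$ is a single vertex of $\Delta_{j(v_\Gamma)}$. Consequently $f_{j(v_\Gamma)}^{v_\Gamma}$ is a single monomial with coefficient $a_{j(v_\Gamma)}$. The collection $(\Delta_2^{v_\Gamma},\dots,\Delta_{n+1}^{v_\Gamma})$ lies in the $(n-1)$-dimensional affine hyperplane orthogonal to $v_\Gamma$, so by Definition 6 the generalized resultant is computed after translating this collection into an $(n-1)$-dimensional coordinate subspace, preserving the induced lattice.

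Then I would apply Example \ref{resmon} (in dimension $n-1$) to the translated collection: since one of the $n$ polyhedra has been reduced to a point, the resultant is
$$R_{\tilde\Delta_1^{v_\Gamma}}(f_2^{v_\Gamma},\dots,f_{n+1}^{v_\Gamma}) = \pm\, a_{j(v_\Gamma)}^{(n-1)!\,Vol(v_\Gamma)},$$
where $Vol(v_\Gamma)$ is precisely the $(n-1)$-dimensional integral mixed volume of the polyhedra $\Delta_j^{v_\Gamma}$ for $j>1$ with $\Delta_{j(v_\Gamma)}^{v_\Gamma}$ omitted. Raising to the $H_1(v_\Gamma)$-th power and taking the product over all facets $\Gamma$ yields the claimed formula, with the $\pm$ absorbed into the overall sign.

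The only point requiring care is the identification of the mixed volume produced by Example \ref{resmon} with the quantity $Vol(v_\Gamma)$ defined in the statement. This is essentially a bookkeeping issue: the integral volume on $L(\Gamma)$ normalized by the lattice $L(\Gamma)\cap\Z^n$ coincides, under any lattice automorphism sending $L(\Gamma)$ to a coordinate hyperplane, with the standard integral volume used in the definition of the generalized $\Delta$-resultant; this is exactly the well-definedness asserted in Definition 6, so no additional argument is required. The rest is substitution.
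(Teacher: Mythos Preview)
Your proposal is correct and follows essentially the same route as the paper: start from the Poisson formula (Theorem~21), use the 1-developed hypothesis to guarantee that for each facet $\Gamma$ some $\Delta_{j(v_\Gamma)}^{v_\Gamma}$ is a vertex, apply Example~\ref{resmon} in dimension $n-1$ to evaluate the facet resultant as $\pm a_{j(v_\Gamma)}^{(n-1)!\,Vol(v_\Gamma)}$, and substitute. The paper in fact carries out this computation in the paragraph immediately preceding the corollary, so the corollary itself is stated without a separate proof; your added remark on the compatibility of the integral volume normalizations with Definition~7 is a welcome clarification but not an additional ingredient.
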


\begin{Corollary} Using Corollary 6 one can produce an algorithm for computing the $\Delta$-resultant of a 1-developed collection $\Delta$.
\end{Corollary}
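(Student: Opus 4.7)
The plan is to exhibit an explicit algorithm implementing the formula of the preceding corollary,
$$R_\Delta(f_1,\dots,f_{n+1})=\pm\,\Pi^{[1]}_\Delta(f_1,\dots,f_{n+1})\cdot\prod_\Gamma a_{j(v_\Gamma)}^{(n-1)!\,Vol(v_\Gamma)H_1(v_\Gamma)},$$
by turning each factor on the right into a finite, effective computation. Corollary 6 will supply the nontrivial ingredient, namely the computation of $\Pi^{[1]}_\Delta$.

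The monomial factor is purely combinatorial. Given the polyhedra, I would enumerate the facets $\Gamma$ of $\tilde\Delta_1=\Delta_2+\dots+\Delta_{n+1}$; for each $\Gamma$ read off its primitive outer normal $v_\Gamma$; compute $H_1(v_\Gamma)=\max\langle v_\Gamma,\Delta_1\rangle$; determine, by scanning the faces $\Delta_j^{v_\Gamma}$, the unique index $j(v_\Gamma)\in\{2,\dots,n+1\}$ for which $\Delta_{j(v_\Gamma)}^{v_\Gamma}$ is a single vertex (this index exists by 1-developedness); extract the coefficient $a_{j(v_\Gamma)}$ of $f_{j(v_\Gamma)}$ at that vertex; and compute the integral $(n-1)$-dimensional mixed volume $Vol(v_\Gamma)$ of the translates of $\{\Delta_j^{v_\Gamma}\}_{j\ne 1,\,j\ne j(v_\Gamma)}$ to the hyperplane orthogonal to $v_\Gamma$. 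All of these are routine finite operations on the given input data.

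The substantive step is computing $\Pi^{[1]}_\Delta$, the product of $f_1$ over the common zeros in $(\C^*)^n$ of $f_2=\dots=f_{n+1}=0$. Since $\Delta$ is 1-developed, this latter system is developed, so Corollary 6 applies: setting $N=n!\,Vol(\Delta_2,\dots,\Delta_{n+1})$, for each $k=1,\dots,N$ the power sum $f_1^{[k]}=\sum_z f_1(z)^k\mu(z)$ is evaluated by the residue identity of Corollary 5 as a finite sum $(-1)^n\sum_A k_A\,res_A\omega_{\varphi_k}$, where $\varphi_k=f_1^k\det(\partial f_j/\partial z_l)_{j\ne 1}$ and each residue at a vertex $A$ of $\sum_{j\ne 1}\Delta_j$ is extracted from a truncated Laurent expansion by Lemma 3. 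Newton's identities then convert the list $f_1^{[1]},\dots,f_1^{[N]}$ into the coefficients of the monic polynomial $T^N+c_{N-1}T^{N-1}+\dots+c_0$ whose roots are $\{f_1(z)\}$ counted with multiplicity, and finally $\Pi^{[1]}_\Delta=(-1)^N c_0$ reads off from the constant term.

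Substituting the computed monomial factor and $\Pi^{[1]}_\Delta$ into the displayed identity yields $R_\Delta$ up to a universal sign, which may be pinned down by comparing one highest-degree monomial against the normalization in Theorem 19. The main obstacle is size, not principle: $N$ can grow combinatorially with the mixed volume, so both the number of residue evaluations and the order to which each Laurent series at a vertex $A$ must be truncated can be large. Every step nonetheless terminates in a bounded number of elementary arithmetic operations on the input data, which is exactly what the statement asserts.
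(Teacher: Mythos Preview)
Your proposal is correct and follows exactly the same approach as the paper, which simply observes that the only implicit term in the formula of Corollary~7 is $\Pi_\Delta^{[1]}$ and that Corollary~6 supplies an algorithm to compute it; you have merely spelled out the details the paper leaves implicit. One small inaccuracy: the index $j(v_\Gamma)$ need not be unique (the paper says ``at least one'' and picks any), but when several $\Delta_j^{v_\Gamma}$ are vertices the mixed volume $Vol(v_\Gamma)$ vanishes and the corresponding factor is $1$, so your algorithm is unaffected.
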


Indeed, the only implicit term in the formula from the Corollary 7 is the term $\Pi_\Delta^{[1]}$. Corollary 6 provides an algorithm for its computation.

\subsection {A sign version of Poisson formula}

Let $\Delta$ be a developed collection. According to Theorem 13 with such $\Delta$, an  $(n+1)$-tuple of monomials $M_1,\dots, M_{n+1}$ and an $(n+1)$-tuple of signs are defined.
According to the formula from Corollary 7 the Poisson formula in that case can be written as $R_\Delta=\pm \Pi_\Delta^{[1]}M_1$. By definition $\Delta$ is not only 1-developed, it is $i$-developed for any $i$. Thus one can write the formula from Corollary 7 putting instead of $f_1$ any $f_i$.

\begin{Corollary} The following equalities hold:
$$
\pm \Pi_\Delta^{[1]}M_1=\dots=\pm \Pi_\Delta^{[n+1]}M_{n+1}=\pm R_\Delta.
$$

\end{Corollary}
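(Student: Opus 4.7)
The plan is to invoke Corollary 7 separately for each index $i\in\{1,\ldots,n+1\}$ in the role of the distinguished index. Since $\Delta$ is completely developed, the subcollection obtained by removing any single $\Delta_i$ is developed, so $\Delta$ is $i$-developed for every $i$. Property 1 recorded after Theorem 18 says that $R_\Delta$ is invariant under reordering of the polyhedra, so I can reorder the $(n+1)$-tuple so that $f_i$ plays the role of $f_1$ in Corollary 7. This produces, for each $i$, an identity
$$R_\Delta(f_1,\ldots,f_{n+1}) = \pm\,\Pi_\Delta^{[i]}(f_1,\ldots,f_{n+1})\cdot \widetilde M_i,$$
where $\widetilde M_i = \prod_\Gamma a_{j(v_\Gamma)}^{(n-1)!\,Vol(v_\Gamma)\,H_{\Delta_i}(v_\Gamma)}$ is the monomial delivered by the Poisson-type formula of Corollary 7 applied to the $i$-developed collection (with $\tilde\Delta_i=\sum_{k\neq i}\Delta_k$ in the role previously played by $\tilde\Delta_1$).

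The second step is to identify $\widetilde M_i$ with the monomial $M_i$ produced by Theorem 13. Two routes are available. Directly, the expression for $\widetilde M_i$ coincides on the nose with the explicit formula for $M_i$ derived in Theorem 14, so $\widetilde M_i = M_i$. Alternatively, one can invoke the uniqueness clause of Theorem 13: the $(n+1)$-tuple $(M_1,\ldots,M_{n+1})$ for which $\Pi_\Delta^{[1]}M_1 s_1=\cdots=\Pi_\Delta^{[n+1]}M_{n+1}s_{n+1}$ holds is uniquely determined (and the signs are uniquely determined up to a common factor $\pm 1$), and the tuple manufactured by applying Corollary 7 for each $i$ is precisely such a tuple, with the signs $s_i=\pm 1$ supplied by the $\pm$ in Corollary 7. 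Either way, $\widetilde M_i=M_i$, so $R_\Delta = \pm\,\Pi_\Delta^{[i]} M_i$ for every $i$, which is exactly the stated chain of equalities.

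The only genuine bookkeeping concern is sign ambiguity: Corollary 7 pins down each equality only up to $\pm 1$, and Theorem 13 determines $(s_1,\ldots,s_{n+1})$ only up to simultaneous negation. Because every equality in Corollary 8 already carries its own independent $\pm$, these ambiguities are absorbed into the symbols and require no further work; consistency between the sign selected in each instance of Corollary 7 and the sign $s_i$ furnished by Theorem 13 is immediate from the uniqueness clause of that theorem. I therefore do not expect any substantive obstacle — the statement is essentially a repackaging of Corollary 7 made symmetric in $i$ by virtue of complete developedness, with Theorem 14 (or Theorem 13) doing the work of matching the Poisson-formula monomial with the Theorem 13 monomial.
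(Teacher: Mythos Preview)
Your proposal is correct and follows essentially the same route as the paper: apply the Poisson formula of Corollary 7 once for each index $i$ (which is permissible because complete developedness means $\Delta$ is $i$-developed for every $i$), and identify the resulting monomial factor with the $M_i$ of Theorem 13 via the explicit formula of Theorem 14. The paper's own argument is the sentence preceding the corollary, and your write-up simply makes the identification $\widetilde M_i=M_i$ more explicit.
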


Thus from the theory of $\Delta$-resultants one can prove the product identities from Theorem 13 up to sign. It is impossible to reconstruct the signs in these identities using $\Delta$-resultants: the $\Delta$-resultant itself is defined up to sign only. Our Theorem 13 provided the sign version
$$
 \Pi_\Delta^{[1]}M_1s_1=\dots=\Pi_\Delta^{[n+1]}M_{n+1}s_{n+1}
 $$
of Poisson type identities and Theorem 20 provides identities $\Pi_\Delta^{[i]}M_i=\pm R_\Delta$ (which unavoidable could be up to sign only).



\begin{thebibliography}{99}
\nopagebreak[3]


\bibitem[B]{B}
Bernshtein, David N. "The number of roots of a system of equations." Functional Analysis and its applications 9.3 (1975): 183-185.


\bibitem[CE]{CE}
Canny, J., \& Emiris, I. (1993). An efficient algorithm for the sparse mixed resultant. In International Symposium on Applied Algebra, Algebraic Algorithms, and Error-Correcting Codes (pp. 89-104). Springer Berlin Heidelberg.


\bibitem[D'AS]{D'AS}
D'Andrea, C., and Sombra, M. (2015). A Poisson formula for the sparse resultant. Proceedings of the London Mathematical Society, pdu069.


\bibitem[Est]{Est}
Esterov, A. (2010). Newton polyhedra of discriminants of projections. Discrete \& Computational Geometry, 44(1), 96-148.


\bibitem[FPar]{FPar}
Fimmel T. and Parshin A. N., ÓIntroduction to higher adelic theoryÓ, Preprint, Steklov Mathematical Institute, Moscow, 1999


\bibitem[GKZ]{GKZ}
Gelfand, I. M., Kapranov, M.,  Zelevinsky, A. (2008). Discriminants, resultants, and multidimensional determinants. Springer Science \& Business Media.


\bibitem[GKh]{GKh}
Gelfond, O. A.,  Khovanskii, A. G. (1996). Newtonian polyhedrons and Grothendieck residues. In Doklady Akademii Nauk (Vol. 54, pp. 700-702).


\bibitem[GKh1]{GKh1}
Gelfond, Olga, and Askold Khovanskii. "Toric geometry and Grothendieck residues." Mosc. Math. J 2.1 (2002): 99-112.


\bibitem[Kh]{Kh}
Khovanskii, Askold G. Algebra and mixed volumes. In book Y.D. Burago and V.A. Zalgaller, Geometric inequalities, Springer-Verlag, Berlin and New York. V. 285, 182Ð207, 1988.


\bibitem[Kh1]{Kh1}
Khovanskii, Askold G. "Newton polygons, curves on torus surfaces, and the converse Weil theorem." Russian Mathematical Surveys 52.6 (1997): 1251-1279.

\bibitem[Kh2]{Kh2}
Khovanskii, Askold G. "Newton polyhedra, a new formula for mixed volume, product of roots of a system of equations." Fields Inst. Comm 24 (1999): 325-364.


\bibitem[Kh3]{Kh3}
Khovanskii, A. G. (2006). An analog of determinant related to ParshinÑKato theory and integer polytopes. Functional Analysis and Its Applications, 40(2), 126-133.


\bibitem[Kh4]{Kh4}
Khovanskii, A. (2006). Logarithmic functional and the Weil reciprocity laws. In Proceedings of the Waterloo Workshop on Computer Algebra (pp. 85-108).


\bibitem[Kh5]{Kh5}
Khovanskii, A. (2008). Logarithmic functional and reciprocity laws. Contemporary Mathematics, Vol. 460, 221-229.


\bibitem[Par]{Par}
Parshin A. N., Galois cohomology and Brauer group of local fields, Trudy Mat. Inst. Steklov, vol. 183, 1990; English Transl. Proc. Steklov Inst. Math., vol 183, 191Ð201, 1991.


\bibitem[PSt]{PSt}
Pedersen, Paul, and Bernd Sturmfels. "Product formulas for resultants and Chow forms." Mathematische Zeitschrift 214.1 (1993): 377-396.


\bibitem[Sop]{Sop}
Soprounov, Ivan. "Residues and tame symbols on toroidal varieties." Compositio Mathematica 140.06 (2004): 1593-1613.


\bibitem[St]{St}
Sturmfels, B. (1994). On the Newton polytope of the resultant. Journal of Algebraic Combinatorics, 3(2), 207-236.

\end{thebibliography}
\end{document}